%październik 2018
\documentclass[leqno,a4paper]{article}
\usepackage{amsmath,amsthm,amssymb}
\usepackage[utf8]{inputenc}
\usepackage[T1]{fontenc}

\usepackage{enumerate}
\usepackage{bbm}
\usepackage{todonotes}
\usepackage{mathrsfs}
\usepackage{mathabx}
\usepackage[breaklinks]{hyperref}
\usepackage{nicefrac}
\usepackage{tikz}
\usetikzlibrary{matrix}

\usepackage{url}
\makeatletter
\g@addto@macro{\UrlBreaks}{\UrlOrds}
\makeatother
\def\UrlBreaks{\do\/\do-}
\usepackage{breakurl}

\usepackage[sort,numbers]{natbib}

\providecommand{\noopsort}[1]{} %potrzebne, gdy nazwiska z prefiksami!

\hbadness 10000

\def\qed{\unskip\quad \hbox{\vrule\vbox
to 6pt {\hrule width 4pt\vfill\hrule}\vrule} }

\newcommand{\bez}{\nopagebreak\hspace*{\fill}
 \nolinebreak$\qed$\vspace{5mm}\par}

\newtheorem{Th}{Theorem}[section]
\newtheorem{Prop}[Th]{Proposition}
\newtheorem{Lemma}[Th]{Lemma}

\theoremstyle{definition}
\newtheorem{Remark}[Th]{Remark}
\newtheorem{Def}{Definition}[section]
\newtheorem{Cor}[Th]{Corollary}
\newtheorem{Example}{Example}[section]

\newcommand{\beq}{\begin{equation}}
\newcommand{\eeq}{\end{equation}}

\def\scalar(#1,#2){(#1\mid#2)}
\newcommand{\til}{\widetilde}
\renewcommand{\hat}{\widehat}

\newcommand{\ca}{{\cal A}}
\newcommand{\cb}{{\cal B}}
\newcommand{\cc}{{\cal C}}
\newcommand{\cd}{{\cal D}}

\newcommand{\cm}{{\cal M}}

\newcommand{\xbm}{(X,{\cal B},\mu)}
\newcommand{\zdr}{(Z,{\cal D},\rho)}
\newcommand{\ycn}{(Y,{\cal C},\nu)}
\newcommand{\zdk}{(Z,{\cal D},\kappa)}
\newcommand{\ot}{\otimes}
\newcommand{\ov}{\overline}
\newcommand{\la}{\lambda}

\newcommand{\bs}{\mathbb{S}}
\newcommand{\A}{\mathbb{A}}
\newcommand{\B}{\mathbb{B}}

\newcommand{\C}{{\mathbb{C}}}
\newcommand{\Z}{{\mathbb{Z}}}
\newcommand{\N}{{\mathbb{N}}}
\newcommand{\E}{{\mathbb{E}}}

\newcommand{\va}{\varphi}

\newcommand{\mob}{\boldsymbol{\mu}}
\newcommand{\lio}{\boldsymbol{\lambda}}

\newcommand{\bfu}{\boldsymbol{u}}
\newcommand{\bfv}{\boldsymbol{v}}

\begin{document}

\title{Automatic sequences are orthogonal to  aperiodic multiplicative functions}
\author{Mariusz Lema\'nczyk\thanks{Research supported by a Narodowe Centrum Nauki grant.
%2014/15/B/ST1/03736.
} \and Clemens M\"ullner\thanks{Research supported by European Research Council (ERC) under the European Union’s Horizon 2020 research and innovation programme under the Grant Agreement No 648132, by the project F55-02 of the Austrian Science Fund FWF which is part of the
Special Research Program “Quasi-Monte Carlo Methods: Theory and Applications” and by Project I1751 (FWF), called MUDERA (Multiplicativity, Determinism, and Randomness).}
}

\maketitle

\thispagestyle{empty}
\begin{abstract}Given a finite alphabet $\A$ and a primitive substitution    $\theta:\A\to\A^\lambda$ (of constant length $\la$), let $(X_\theta,S)$ denote the corresponding dynamical system, where $X_\theta$ is the closure of the orbit via the left shift $S$ of a fixed point of the natural extension of $\theta$ to a self-map of $\A^{\Z}$. The main result of the paper is that all continuous observables in $X_\theta$ are orthogonal to any bounded, aperiodic, multiplicative function $\bfu:\N\to\C$, i.e.
\[  \lim_{N\to\infty}\frac1N\sum_{n\leq N}f(S^nx)\bfu(n)=0\]
for all $f\in C(X_\theta)$ and $x\in X_\theta$. In particular, each primitive automatic sequence, that is, a sequence read by a primitive finite automaton, is orthogonal to any bounded, aperiodic, multiplicative function.
\end{abstract}

\section*{Introduction} Throughout the paper, by an {\em automatic}
sequence $(a_n)_{n\geq0}\subset\C$, we mean a continuous
observable in a substitutional system $(X_\theta,S)$, i.e.\
$a_n=f(S^nx)$, $n\geq0$, for some $f\in C(X_\theta)$ and
$x\in X_\theta$.\footnote{This is a slight extension of the
classical notion of automatic sequence meant as a sequence
obtained by a finite code of a fixed point of a substitution,
see e.g.\ \cite{Al-Sh}, \cite{Fo},  \cite{Qu} to see also a relationship with sequences read by finite automata.} Here, we
assume that $\theta:\A\to \A^\lambda$ is a substitution
of constant length $\la$ (over the alphabet $\A$), and we
let $(X_\theta,S)$  denote the corresponding subshift of
the full shift $(\A^{\Z},S)$ (see Section~\ref{s:subst} for details).

By $\mob:\N\to\{-1,0,1\}$ we denote the classical M\"obius function: $\mob(p_1\ldots p_k)=(-1)^k$ for different primes $p_1,\ldots,p_k$, $\mob(1)=1$ and $\mob(n)=0$ for all non square-free numbers $n\in\N$. In connection with the celebrated Sarnak's conjecture \cite{Sa}
on M\"obius orthogonality of zero entropy systems, i.e.
\beq\label{mdis}
\lim_{N\to\infty}\frac1N\sum_{n\leq N}f(S^nx)\mob(n)=0\eeq
for each zero entropy dynamical system $(X,S)$,
all $f\in C(X)$ and $x\in X$
(for more information on the subject, see the survey article
\cite{Fe-Ku-Le}), it has been proved in \cite{Mu} that
all automatic sequences $(a_n)$ are orthogonal to the classical
M\"obius function $\mob$, i.e.\
$\lim_{N\to\infty}\frac1N\sum_{n\leq N}a_n\mob(n)=0$. This triggers the question whether~\eqref{mdis} remains true if we replace $\mob$ by another arithmetic function.
The M\"obius function is an example of an arithmetic function
which is multiplicative ($\mob(mn)=\mob(m)\mob(n)$ whenever
$m,n$ are coprime), hence, it is natural to ask whether
automatic sequences are orthogonal to each zero mean,\footnote{Recall that a sequence $\bfu:\N\to\C$ has
{\em zero mean} if $M(\bfu):=\lim_{N\to\infty}\frac1N\sum_{n\leq N}
\bfu(n)$ exists and equals zero.} bounded, multiplicative function $\bfu$.
Said that, one realizes immediately  that the answer to such a question is
negative as periodic functions are automatic sequences and  we have
many periodic, multiplicative functions.\footnote{Indeed, examples of
periodic multiplicative functions are given by: Dirichlet characters,
or $n\mapsto(-1)^{n+1}$.} But even if we consider the non-periodic
case, still, there are non-periodic automatic sequences which are
(completely) multiplicative, zero mean functions  \cite{Al-Go},
\cite{Sch-P}, \cite{Ya}.  On the other hand, it has been proved in
\cite{Fe-Ku-Le-Ma} that many automatic sequences given by so called
bijective substitutions are orthogonal to all zero mean, bounded,  multiplicative
functions (in \cite{Dr}, it is proved that they are orthogonal to
the M\"obius function).

A stronger requirement than zero mean of $\bfu$ which one can consider
is that of {\em aperiodicity}, that is
$$
\lim_{N\to\infty}\frac1N\sum_{n\leq N}\bfu(an+b)=0$$
for each $a,b\in\N$, i.e.\ $\bfu$ has a mean, equal to zero, along each arithmetic progression.
Many classical multiplicative functions are aperiodic,
e.g.\ $\mob$ or the Liouville function $\lio$.

The aim of the present paper is to prove the following ($h(\theta)$ and $c(\theta)$ stand, respectively,  for the height and the column number of the substitution $\theta$, see Section~\ref{s:subst}):

\begin{Th}\label{t:main} Let $\theta$ be a  primitive
substitution of constant length $\la$. Then, each automatic
sequence $a_n=f(S^nx)$, $n\geq0$, in $(X_\theta,S)$ is orthogonal to any bounded,
aperiodic, multiplicative function $\bfu:\N\to\C$,  i.e.
\beq\label{orto}
\lim_{N\to\infty}\frac1N\sum_{n\leq N}a_n\bfu(n)=0.\eeq
More precisely:

(i) If  $c(\theta)=h(\theta)$ then each automatic sequence
$(f(S^nx))_{n\geq0}$ is orthogonal to any bounded, aperiodic,
arithmetic function $\bfu$.\footnote{We emphasize that no
multiplicativity on $\bfu$ is required.}

(ii) If $c(\theta)>h(\theta)$ then:
the automatic sequences $(f(S^nx))_{n\geq0}$ for which the spectral
measure of $f$ is continuous are orthogonal to all bounded,
multiplicative
functions. If the spectral measure is discrete then (i) applies.
All other automatic sequences in $(X_\theta,S)$ are orthogonal to
all bounded, aperiodic, multiplicative  functions.
\end{Th}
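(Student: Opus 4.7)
The proof plan naturally splits according to the spectral type of $f\in C(X_\theta)$, which parallels the dichotomy $c(\theta)=h(\theta)$ versus $c(\theta)>h(\theta)$. I would use the classical facts that $(X_\theta,S)$ is uniquely ergodic with invariant measure $\mu$ and that its maximal equicontinuous factor is the translation by $(1,1)$ on the odometer $\Z_\la\times\Z/h(\theta)\Z$, together with Dekking's criterion: $c(\theta)=h(\theta)$ is equivalent to this factor map being an isomorphism, i.e.\ to $(X_\theta,S,\mu)$ having \emph{pure discrete} spectrum. The three alternatives listed in the theorem---pure discrete, pure continuous, or mixed spectrum of $f$---are then treated separately.

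In the pure-discrete case (all of (i) and the discrete subcase of (ii)) I would uniformly approximate $f$ on $X_\theta$ by a finite linear combination of characters of $\Z_\la\times\Z/h(\theta)\Z$ pulled back through the factor map. Each such character factors through a finite cyclic quotient $\Z/(\la^k h(\theta))\Z$ for some $k$, so its restriction to the orbit $(S^n x)_{n\ge 0}$ is an exactly periodic function of $n$. Aperiodicity of $\bfu$, which by definition means vanishing Ces\`aro averages along every arithmetic progression, then forces~\eqref{orto} directly. Crucially, no multiplicativity is used here, which is why the conclusion of (i) is valid for \emph{arbitrary} bounded aperiodic arithmetic functions.

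For the pure continuous spectrum subcase of (ii) I would apply the Bourgain--Sarnak--Ziegler orthogonality criterion, reducing~\eqref{orto} for every bounded multiplicative $\bfu$ to the statement
\[
\lim_{N\to\infty}\frac1N\sum_{n\leq N}f(S^{pn}x)\,\overline{f(S^{qn}x)}=0
\]
for every pair of distinct primes $p,q$. Using unique ergodicity of $(X_\theta,S)$ applied to $f\cdot\overline{f\circ S^{q-p}}$ (equivalently, unique ergodicity of the relevant diagonal self-joining of $(X_\theta,S^p)$ and $(X_\theta,S^q)$), the above limit is identified with the Ces\`aro mean of the Fourier coefficients $\widehat{\sigma_f}((q-p)n)$ of the spectral measure $\sigma_f$; continuity of $\sigma_f$ combined with Wiener's lemma then forces this mean to vanish. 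The mixed-spectrum subcase of (ii) is handled by splitting $f=f_d+f_c$ in $L^2(\mu)$ and applying the two previous steps to each piece, with the simultaneous aperiodicity and multiplicativity of $\bfu$ being exactly what is needed to run both arguments in parallel.

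The main obstacle I expect is the strengthening of unique ergodicity required in the continuous-spectrum step: the identification of averages with spectral integrals used above is automatic on a $\mu$-full measure set but must be established for \emph{every} $x\in X_\theta$ if the pointwise statement~\eqref{orto} is to hold. This amounts to showing that the self-joining of $(X_\theta,S^p)$ and $(X_\theta,S^q)$ generated by each individual orbit $(S^{pn}x,S^{qn}x)_{n\ge 0}$ is uniquely ergodic and projects onto $\mu\otimes\mu$---a genuinely substitutional input, tightly connected to the structure theorem underlying \cite{Mu} and crucially dependent on $c(\theta)>h(\theta)$.
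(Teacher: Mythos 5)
Your high-level skeleton (a discrete/continuous spectral dichotomy, aperiodicity for the discrete part, a KBSZ-type criterion for the continuous part) does match the paper's strategy in outline, but each of the three steps contains a genuine gap. In the discrete case, you cannot \emph{uniformly} approximate an arbitrary $f\in C(X_\theta)$ by pullbacks of characters of the maximal equicontinuous factor: $X_\theta$ is an infinite subshift, hence expansive, so the factor map onto the odometer is not injective (only a.e.\ one-to-one), the pullback algebra does not separate points, and by Stone--Weierstrass it is not dense in $C(X_\theta)$. The step is repairable -- approximate in $L^2(\mu_\theta)$ by continuous eigenfunctions and use unique ergodicity of $|f-g|^2$ to control the orbital error for every $x$ -- or one can follow the paper, which instead proves that the fixed point is Weyl rationally almost periodic and invokes the result of Bergelson--Ku\l aga-Przymus--Lema\'nczyk--Richter; but as written the approximation claim is false.

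The continuous-spectrum step has a more serious defect. The average $\frac1N\sum_{n\leq N}f(S^{pn}x)\overline{f(S^{qn}x)}$ is \emph{not} $\frac1N\sum_{n\le N}\widehat{\sigma_f}((q-p)n)$: its limit points are integrals $\int f\otimes\overline f\,d\rho$ against limit joinings $\rho$ of $S^p$ and $S^q$ generated by the orbit of $(x,x)$, and these are not determined by $\sigma_f$ alone. Moreover the statement you propose to prove -- that this joining equals $\mu\otimes\mu$ -- is false: $S^p$ and $S^q$ share the odometer as a common factor on which $R^p\cong R^q$, and the projection of $\rho$ to $H_\la\times H_\la$ is a \emph{graph} joining, never the product. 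What is actually true, and what makes the argument work for $f\perp L^2(\mathrm{Kronecker})$, is that every such $\rho$ is a relatively independent extension of a graph joining of the Kronecker factors; establishing this is the heart of the paper and rests on the finiteness of the essential centralizer when $c(\theta)>h(\theta)$ (Host--Parreau), fed through Lemma~\ref{l:essc} and Proposition~\ref{p:essc3} and the group-extension model $\widehat\Theta$. Your proposal contains no substitute for this input. Finally, your mixed-case splitting $f=f_d+f_c$ in $L^2(\mu)$ produces pieces that are not continuous, so neither sub-argument (unique ergodicity, KBSZ along a single orbit for every $x$) applies to them; the paper's entire construction of the extension $(X_{\widehat\Theta},S)$ with the finite group $\mathcal{V}$ of homeomorphisms in the centralizer exists precisely to realize this decomposition with \emph{continuous} summands, and that construction is missing from your plan.
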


We have already mentioned that examples of automatic sequences which are multiplicative functions are known, but they are quite special. For example, in \cite{Sch-P}, it is proved that completely multiplicative, never vanishing functions ``produced'' by finite automata are limits in the upper density of periodic sequences, that is, they are Besicovitch rationally almost periodic. In fact, we can strengthen this result by showing that they are even Weyl rationally almost periodic,\footnote{A sequence $(b_n)$ taking values in a finite set $\B$ is called {\em Weyl rationally almost periodic} if it can be approximated by periodic sequences (with values in $\B$) in the pseudo-metric
$$d_W(x,y) = \limsup_{N\to\infty}\sup_{\ell\geq1}\frac1N|\{\ell\leq n<\ell+N:\: x(n)\neq y(n)\}|.$$} which is a consequence of Theorem~\ref{t:main}:

\begin{Cor}\label{c:main} All multiplicative and automatic sequences produced by  primitive automata\footnote{Sequences produced by finite automata take only finitely many values and are, therefore, bounded.} are Weyl rationally almost periodic.
Furthermore, the automaton/substitution that produces such an automatic sequence can be chosen to have equal column number and height.
\end{Cor}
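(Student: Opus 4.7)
The plan is to deduce the corollary from Theorem~\ref{t:main} in three steps: (i) use multiplicativity of $(a_n)$ to show that the generating continuous observable has pure discrete spectrum; (ii) exploit the odometer structure of the resulting Kronecker factor to obtain Weyl approximation by periodic sequences; (iii) invoke Dekking's characterization in the realizability direction to obtain a substitution with $c=h$.

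Write $a_n=f(S^n x)$ with $f\in C(X_\theta)$ and let $\mu$ be the unique $S$-invariant measure on $X_\theta$. Decompose $f=f_d+f_c$ in $L^2(\mu)$ into parts of pure discrete and purely continuous spectral measure; continuity of $L^2$-eigenfunctions for primitive constant length substitutions allows us to take $f_d,f_c\in C(X_\theta)$. If $c(\theta)=h(\theta)$, Dekking's theorem yields pure discrete spectrum of $(X_\theta,S,\mu)$ and so $f_c\equiv 0$. Otherwise, Theorem~\ref{t:main}(ii) applied to the continuous observable $f_c$ (whose spectral measure is continuous by construction) against the bounded multiplicative function $\overline{\mathbf a}=(\bar a_n)$ gives
\[
\frac{1}{N}\sum_{n\leq N} f_c(S^n x)\,\bar a_n\longrightarrow 0,
\]
and splitting $\bar a_n=\overline{f_d(S^n x)}+\overline{f_c(S^n x)}$, together with $\langle f_c,f_d\rangle_{L^2(\mu)}=0$, makes the cross term vanish by unique ergodicity; this leaves $\frac{1}{N}\sum|f_c(S^n x)|^2\to\|f_c\|^2_{L^2(\mu)}=0$, and continuity of $f_c$ combined with the fact that $\mu$ has full support forces $f_c\equiv 0$.

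With $f$ of pure discrete spectrum, the Kronecker factor $(K,T)$ of $(X_\theta,S)$ is a minimal rotation on the profinite compact abelian group $K\cong\Z_\la\times\Z/h(\theta)\Z$, accessed via a continuous factor map $\pi\colon X_\theta\to K$, and $f=\tilde f\circ\pi$ $\mu$-a.e.\ for some bounded measurable $\tilde f\colon K\to\C$ taking values in the finite range of $f$. Since $K$ is profinite, $\tilde f$ is approximable in $L^1(\mu_K)$ by locally constant $\tilde f_k\colon K\to\C$ of the same finite range, each factoring through a finite cyclic quotient $K/N_k$ and hence producing a periodic sequence $p_k(n)=\tilde f_k(T^n\pi(x))$. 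Uniform unique ergodicity of $T$ on $K$ then converts the $L^1$-approximation into a uniform Weyl-density estimate
\[
d_W\bigl((a_n),p_k\bigr)\leq \mu_K\bigl(\{\tilde f\neq \tilde f_k\}\bigr)+o(1)\tend{k}{\infty}0,
\]
yielding WRAP. Finally, $(K,T)$ is itself realized by a primitive constant length $\la$ substitution $\theta'$ with $c(\theta')=h(\theta')$ (Dekking's characterization in the realizability direction), so $(a_n)$ is produced by the automaton associated to $\theta'$ through the corresponding continuous encoding.

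The main obstacle is the passage from the $L^2$-identification $f=\tilde f\circ\pi$ to a genuine pointwise Weyl-density estimate along the specific orbit of $x$: one must leverage both the coincidence of measurable and topological eigenfunctions for primitive constant length substitutions and the totally disconnected profinite structure of $K$, so as to choose approximants $\tilde f_k$ with clopen level sets for which unique ergodicity of $T$ yields a Weyl estimate uniformly in the starting position of the averaging window; equally delicate is the realization of the Kronecker factor as a primitive substitutional system with $c=h$ producing $(a_n)$ on the nose rather than merely up to Weyl density zero.
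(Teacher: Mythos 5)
Your overall strategy mirrors the paper's: use multiplicativity of $(a_n)$ against itself, via Theorem~\ref{t:main}, to kill the part of $f$ with continuous spectral measure, and then read off Weyl rational almost periodicity from the remaining discrete-spectrum part through the odometer/synchronizing structure. Your step (ii) is essentially sound once one notes that a continuous finite-valued function is locally constant, so the disagreement set with a locally constant approximant is clopen and unique ergodicity gives the uniform density bound. But there is a genuine gap at the decisive first step.

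You assert that the splitting $f=f_d+f_c$ into discrete and continuous spectral parts can be performed inside $C(X_\theta)$, justified by ``continuity of $L^2$-eigenfunctions for primitive constant length substitutions.'' This does not follow: continuity of each individual eigenfunction does not imply that the orthogonal projection of $L^2(\mu_\theta)$ onto the Kronecker factor maps $C(X_\theta)$ into $C(X_\theta)$; the conditional expectation with respect to a factor generally destroys continuity. Without $f_c\in C(X_\theta)$ you cannot invoke Theorem~\ref{t:main}(ii), which is a statement about \emph{continuous} observables, so the argument that $\|f_c\|_{L^2}=0$ collapses. This is precisely the obstruction the paper is built around: Remark~\ref{r:firstconcl} states explicitly that even on the intermediate extension $X_{\widetilde\Theta}$ the decomposition~\eqref{dobre} into continuous pieces is not known, and the whole construction of $\widehat\Theta$ in Section~\ref{sec:group} exists to produce a finite group $\mathcal{V}$ of homeomorphisms in the centralizer so that the conditional expectation becomes the continuous average $\frac1{|G|}\sum_{\tau\in G}F\circ V_\tau$. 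The paper's proof therefore pulls $F$ back to $X_{\widehat\Theta}$ via the topological factor map $\pi$ and decomposes $F\circ\pi=F_1+F_2$ there; your proof must do the same (or supply a genuinely new argument for continuity of $f_d$ on $X_\theta$ itself, which is false in the generality you claim).

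The final clause of the corollary is also not established. Invoking an unspecified ``Dekking characterization in the realizability direction'' to produce a primitive substitution with $c=h$ that generates $(a_n)$ \emph{on the nose} is exactly the delicate point (cf.\ Herning's example showing the Kronecker factor of a substitution system need not be substitutional). The paper instead exhibits the generating system explicitly: after showing $F_1(\sigma,M)$ depends only on $(f(\sigma),M)$, the sequence becomes a $1$-code of the fixed point of $p\vee\widetilde\theta$, and $c(p\vee\widetilde\theta)=h(p\vee\widetilde\theta)=h$ is computed from Lemmas~\ref{le:joining_height} and~\ref{le:joining_height0}. You flag both of these issues yourself as ``obstacles,'' but they are not side remarks --- they are the content of the proof.
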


The main problem in this paper belongs to number theory and
combinatorics, and  the proof of  M\"obius orthogonality for
automatic sequences in \cite{Mu} relied on
combinatorial properties of sequences produced by automata and
an application of the (number theoretic) method of Mauduit and Rivat
\cite{Ma-Ri}, \cite{Ma-Ri1}. However, the problem of orthogonality of sequences
is deeply related to classical ergodic theory, namely, to
Furstenberg's disjointness of dynamical systems, see \cite{Fe-Ku-Le}
for an exhaustive presentation of this approach. A use of ergodic
theory tools  is the main approach in the present paper: we make use
of some old results on the centralizer of substitutional systems
\cite{Ho-Pa} and, more surprisingly, we find an ergodic interpretation
of the combinatorial approach from \cite{Mu} in terms of joinings of
substitutional systems. Finally, we relativize some of the arguments
from \cite{Fe-Ku-Le-Ma} to reach the goal. For example, the reason
behind the orthogonality of automatic sequences to all bounded,
multiplicative functions
(whenever the spectral measure of an automatic sequence is continuous)
in (ii) of Theorem~\ref{t:main}  is that the essential centralizer
of substitutional systems is finite \cite{Ho-Pa}. This ``small size''
of the essential centralizer puts
serious restrictions on possible joinings between
(sufficiently large) different prime powers $S^p$, $S^q$ of $S$
and allows  one to
use the numerical  KBSZ criterion on the orthogonality of numerical
sequences with bounded, multiplicative functions (see Section~\ref{s:kbszcrit}).

The ergodic theory approach (together with number-theoretic tools)
turn out to be very effective in some attempts to prove Sarnak's conjecture. The recent results of
Frantzikinakis and Host \cite{Fr-Ho1} show: if instead of
orthogonality~\eqref{mdis}, we ask for its weaker, namely, logarithmic
version:
\beq\label{mdislog}
\lim_{N\to\infty}\frac1{\log N}\sum_{n\leq N}\frac1n f(S^nx)\mob(n)=0
\eeq
then this orthogonality indeed holds for all zero entropy $(X,S)$ systems whose set
of ergodic (invariant) measures is countable, in particular, it applies to
substitutional systems. Moreover, in \eqref{mdislog}, we can replace
$\mob$ by many other so called non-pretentious multiplicative functions
\cite{Fr-Ho2}. However, so far, the approach through logarithmic
averaging does not seem to answer the main conjecture~\eqref{mdis}
(see also \cite{Go-Kw-Le} and \cite{Ta2017}).

The ergodic theory approach allows one for a further extension of the notion of being automatic sequence which triggers one more natural question. Namely, a uniquely ergodic topological dynamical system $(Y,T)$ (with a unique invariant measure $\nu$) is called MT-{\em substitutional} if there is a primitive substitution $\theta:\A\to\A^{\lambda}$ such that the measure-theoretic dynamical systems $(Y,\nu, T)$ and $(X_{\theta},\mu_{\theta},S)$ are measure-theoretically isomorphic. Then, any sequence $a_n=g(T^ny)$, $n\geq0$ (with $g\in C(Y)$ and $y\in Y$), can be called MT-{\em automatic}.\footnote{For example, sequences given by pieces of automatic sequences in $(X_\theta,S)$: $g(S^nx_k)$ for $b_k\leq n<b_{k+1}$, $k\geq1$, where $b_1=1$ and $b_{k+1}-b_k\to\infty$, for $g\in C(X_\theta)$ and $(x_k)\subset X_\theta$, are MT-automatic, see Section~\ref{s:lastS}.}  To cope with the MT-automatic case we need to control a behaviour of $\bfu$ on so called short intervals:
\beq\label{zalo2}
\lim_{K\to\infty}\frac1{b_K}\sum_{k<K}\left|
\sum_{b_k\leq n<b_{k+1}}\bfu(n)\right|=0\eeq
(for each $(b_k)$ satisfying $b_{k+1}-b_k\to\infty$) which is much stronger than the requirement that $M(\bfu)=0$. It turns out however that~\eqref{zalo2} is satisfied for many so called non-pretentious multiplicative functions, see \cite{Ma-Ra}, \cite{Ma-Ra-Ta}; in particular, it is satisfied for the M\"obius function $\mob$.

\begin{Cor}\label{c:main2} Any MT-automatic sequence $(a_n)$ satisfies~\eqref{orto}
for any multiplicative, bounded, aperiodic $\bfu:\N\to\C$ satisfying~\eqref{zalo2}.  For each MT-substitutional system $(Y,T)$ and any $g\in C(Y)$, we have
$$
\frac1N\sum_{n\leq N}g(T^ny)\bfu(n)\to0\text{ when }N\to\infty$$
{\bf uniformly} in $y\in Y$. In particular, the uniform convergence takes place in $(X_\theta,S)$ for any primitive substitution $\theta$.
\end{Cor}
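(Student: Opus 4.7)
The plan is to establish the stronger (uniform) assertion directly; the first statement then follows by specialization to any fixed $g\in C(Y)$ and $y\in Y$. Concretely, I aim for
\[
\sup_{y\in Y}\left|\frac{1}{N}\sum_{n\leq N}g(T^ny)\bfu(n)\right|\xrightarrow[N\to\infty]{}0
\]
for every MT-substitutional $(Y,T)$ and $g\in C(Y)$. The approach combines three ingredients: a uniform version of Theorem~\ref{t:main} on $(X_\theta,S)$; an $L^1$-approximation of $g\circ\pi$ by continuous functions on $X_\theta$, transported through the measurable isomorphism $\pi:(X_\theta,\mu_\theta,S)\to(Y,\nu,T)$; and a short-interval blocking argument based on~\eqref{zalo2}.

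First, we upgrade Theorem~\ref{t:main} to uniform convergence in $x\in X_\theta$. The proof of the main theorem proceeds through disjointness of $(X_\theta,\mu_\theta,S)$ from Furstenberg systems of $\bfu$ combined with the KBSZ criterion, and since $(X_\theta,S)$ is uniquely ergodic while the set of Furstenberg joinings is weak-$*$ compact, the same argument delivers $\sup_{x\in X_\theta}|\frac{1}{N}\sum_{n\leq N}f(S^n x)\bfu(n)|\to 0$ for every $f\in C(X_\theta)$. Next, for $g\in C(Y)$ and $\varepsilon>0$, density of $C(X_\theta)$ in $L^1(\mu_\theta)$ produces $f\in C(X_\theta)$ with $\|f\|_\infty\leq\|g\|_\infty$ and $\|g\circ\pi-f\|_{L^1(\mu_\theta)}<\varepsilon$. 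Setting $h:=g-f\circ\pi^{-1}\in L^\infty(\nu)$ (so $\|h\|_{L^1(\nu)}<\varepsilon$), we split
\[
\frac{1}{N}\sum_{n\leq N}g(T^ny)\bfu(n)=\frac{1}{N}\sum_{n\leq N}(f\circ\pi^{-1})(T^ny)\bfu(n)+\frac{1}{N}\sum_{n\leq N}h(T^ny)\bfu(n).
\]
For the $h$-term, outer regularity of $\nu$ combined with Urysohn's lemma on the compact metric space $Y$ yields $H\in C(Y)$ with $|h|\leq H$ and $\int H\,d\nu=O(\varepsilon)$; unique ergodicity of $(Y,T)$ then furnishes $\frac{1}{N}\sum_{n\leq N}|h|(T^ny)\leq\frac{1}{N}\sum_{n\leq N}H(T^ny)\to\int H\,d\nu$, uniformly in $y$, bounding the $h$-contribution by $O(\varepsilon)$ uniformly.

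The main obstacle is the first summand, because the intertwining $\pi^{-1}\circ T=S\circ\pi^{-1}$ holds only $\nu$-a.e., so the uniform estimate on $(X_\theta,S)$ does not automatically transfer to every $y\in Y$. To bypass this null exceptional set we partition $[1,N]$ into consecutive blocks $I_k=[b_k,b_{k+1})$ with $b_{k+1}-b_k\to\infty$ slowly and, on each block, decompose $\bfu(n)=u_k+(\bfu(n)-u_k)$ with $u_k$ the block average. The block-average contribution is at most $\|f\|_\infty\sum_k|\sum_{n\in I_k}\bfu(n)|/N=o(1)$ uniformly in $y$, directly by~\eqref{zalo2}. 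The residual zero-mean deviations are handled by Lusin-approximating $\pi^{-1}$ by a continuous map defined on a Borel set of $\nu$-measure close to $1$, combined with unique ergodicity of $(Y,T)$ (which guarantees that for every $y$ the density of blocks where the continuous approximant fails is small) and the uniform form of Theorem~\ref{t:main} on $(X_\theta,S)$. Sending the Lusin-exceptional parameter, the $L^1$-error $\varepsilon$, and the block scale $b_{k+1}-b_k$ to their respective limits yields uniform convergence and proves the corollary.
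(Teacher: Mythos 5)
There is a genuine gap, at two places. First, your opening step --- upgrading Theorem~\ref{t:main} to $\sup_{x\in X_\theta}|\frac1N\sum_{n\leq N}f(S^nx)\bfu(n)|\to0$ ``by the same argument'' via unique ergodicity and weak-$\ast$ compactness of limit joinings --- is not justified. The KBSZ criterion, as used in the paper, yields convergence for each fixed $x$; compactness of the set of limit joinings does not convert pointwise into uniform convergence, and the paper explicitly states (end of Section~\ref{s:lastS}) that uniformity in $x$ for Theorem~\ref{t:main} is \emph{unknown} when~\eqref{zalo2} is not assumed. Uniformity is exactly what hypothesis~\eqref{zalo2} is there to buy, via a different mechanism.

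Second, and more fundamentally, the transfer through the a.e.-defined isomorphism $\pi$ cannot be completed with the tools you invoke. After blocking $[1,N]$ into intervals $[b_k,b_{k+1})$ and Lusin-approximating $\pi^{-1}$, what you obtain on each good block is a sum of the form $\sum_{b_k\leq n<b_{k+1}}f(S^nx_k)\bfu(n)$ where the base points $x_k\in X_\theta$ \emph{vary with} $k$ (the approximant does not intertwine $T$ and $S$, so consecutive blocks do not lie on a single $S$-orbit). A uniform-in-$x$ version of Theorem~\ref{t:main} controls $\frac1N\sum_{n\leq N}$ along one orbit; it says nothing about $\frac1{b_K}\sum_{k\leq K}|\sum_{b_k\leq n<b_{k+1}}f(S^nx_k)\bfu(n)|$ with moving base points. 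That estimate is precisely the strong $\bfu$-MOMO property of~\cite{Ab-Ku-Le-Ru}, which is the key notion the paper uses: it is first established for $(X_{\widehat\Theta},S)$ (Lemma~\ref{l:momo1}, using~\eqref{zalo2} for the Kronecker part and the joining analysis for the orthogonal part), then transferred to all uniquely ergodic models of the same measure-theoretic system by the Main Theorem of~\cite{Ab-Ku-Le-Ru}, and finally converted into uniform convergence by Theorem~7 there. Your decomposition $\bfu(n)=u_k+(\bfu(n)-u_k)$ does not substitute for this: the residual term $\sum_{n\in I_k}f(T^ny)(\bfu(n)-u_k)$ has no cancellation forced by the zero block-mean alone, so the proposal never closes.
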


In the course of the proof of Theorem~\ref{t:main}, given a
substitutional system $(X_\theta,S)$, we will build
successive continuous extensions of it, which are also given by
substitutions, where for the largest extension, Theorem~\ref{t:main}
will be easier to handle. As a byproduct of this procedure, we will clear up Remark 9.1 from \cite{Qu}, p.~229, about
the form of a cocycle in a skew product representation of $(X_\theta,\mu_\theta,S)$ over the
Kronecker factor.

{\bf Added in June 2018:} In the recent paper \cite{Ko}, it is proved that all $q$-multiplicative sequences are either almost periodic which roughly would correspond to~(i) of Theorem~\ref{t:main} or are orthogonal to all bounded, multiplicative functions. This makes some overlap (some automatic sequences are $q$-multiplicative) with our main result but the classes considered in \cite{Ko} and in the present paper are essentially different.

\section{Ergodic theory necessities}
\subsection{Joinings and disjointness}
By a (measure-theoretic) {\em dynamical system} we mean $(X,\cb,\mu,S)$, where $\xbm$
is a probability standard Borel space and $S:X\to X$ is an a.e.\
bijection which is bimeasurable and measure-preserving.
If no confusion arises, we will speak about $S$ itself and call
it an {\em automorphism}.\footnote{In what follows we will
also use notation $S\in{\rm Aut}\xbm$,
where ${\rm Aut}\xbm$ stands for the Polish group of
all automorphisms of $\xbm$. The topology is given by
the strong operator topology of the corresponding unitary
operators $U_S$, $U_Sf:=f\circ S$ on $L^2\xbm$.}

\begin{Remark}\label{r1} Each homeomorphism $S$ of
a compact metric space $X$ determines many (measure-theoretic) dynamical systems
$(X,\cb(X),\mu,S)$ with $\mu\in M(X,S)$, where $M(X,S)$ stands
for the set of probability Borel measures on $X$ ($\mathcal{B}(X)$
stands for the $\sigma$-algebra of Borel sets).
Recall that by the Krylov-Bogolyubov theorem, $M(X,S)\neq\emptyset$,
and moreover, $M(X,S)$ endowed with the weak-$\ast$ topology becomes a
compact metrizable space. The set $M(X,S)$ has a natural structure of
a convex set (in fact, it is a Choquet simplex) and its extremal
points are precisely the ergodic measures.
We say that the topological system $(X,S)$ is {\em uniquely ergodic}
if  it has only one invariant measure (which must be ergodic).
The system $(X,S)$ is called {\em minimal} if it does not contain a
proper subsystem (equivalently, the orbit of each point is dense).
\end{Remark}

\begin{Remark}\label{r:podszifty} Basic systems considered in the paper
are {\em subshifts} whose definition we now recall.
Let $\A$  be  a finite, nonempty set (alphabet). By a {\em block}
(or {\em word}) over $\A$, we mean $B\in\A^n$ (for some $n\geq0$) and $n=:|B|$ is the {\em length} of $B$.
Hence $B=(a_{i_0},a_{i_1},\ldots,a_{i_{n-1}})$ with $a_{i_k}\in \A$ for
$k=0,\ldots,n-1$. We will also use the following notation:
$B=B_0B_1\ldots B_{n-1}$, where $B_j=a_{i_j}$ for $j=0,\ldots,n-1$.  If $0\leq i\leq j<n$ then we write $B[i,j]$ for
$B_iB_{i+1}\ldots B_j$, in particular, notationally, $B[i]=B_i$.
We say that the (sub)block $B[i,j]$ {\em appears} in $B$. The
notation we have just presented has its natural extension to infinite
sequences.

Given $\eta\in\A^{\N}$, we can define
$$
X_\eta:=\{x\in \A^{\Z}:\mbox{each block appearing in $x$ appears in
$\eta$}\}.$$
It is not hard to see that $X_\eta$ is closed and $S$-invariant, where
$S:\A^{\Z}\to\A^{\Z}$ is the left {\em shift}, i.e.
$$
S((x_n)_{n\in\Z})=(y_n)_{n\in\Z},\text{ where }y_n=x_{n+1},\;n\in\Z.$$
Then the dynamical system $(X_\eta,S)$ is called a {\em subshift}
(given by $\eta$).
If $\eta$ has the property that each block appearing in it
reappears infinitely often (and such are substitutional systems which
are considered in the paper)
then there exists $\ov{\eta}\in\A^{\Z}$ satisfying:
$\ov{\eta}_k=\eta_k$ for each $k\geq0$ and
$X_\eta=\ov{\{S^m\ov{\eta}:\:m\in\Z\}}$.
\end{Remark}

Given another system $(Y,\cc,\nu,T)$, we may consider the set
$J(S,T)$ of {\em joinings} of automorphisms $S$ and $T$. Namely,
$\kappa\in J(S,T)$ if $\kappa$ is an $S\times T$-invariant
probability measure on $\cb\ot\cc$ with the projections
$\mu$ and $\nu$ on $X$ and $Y$, respectively.\footnote{We can also
speak about (topological) joinings in the context of topological
dynamics. Indeed, if $(X,S), (Y,T)$ are two topological systems
then each closed subset $M\subset X\times Y$ invariant under
$S\times T$ and with full projections is called a (topological)
joining.} Note that the projections maps
$p_X:X\times Y\to X$, $p_Y:X\times Y\to Y$ settle factor
maps between the dynamical systems
$$
(X\times Y,\cb\ot\cc,\kappa,S\times T)\text{ and } (X,\cb,\mu,S),\:(Y,\cc,\nu,T),\text{ respectively}.$$
The automorphisms $S$ and $T$ are called {\em disjoint}
if the only joining of $S$ and $T$ is product measure
$\mu\ot\nu$, i.e. $J(S,T)=\{\mu\ot\nu\}$. We will then write
$S\perp T$.
Note that if $S\perp T$ then at least one of these
automorphisms must be ergodic.
If both are ergodic, the subset $J^e(S,T)$ of ergodic
joinings (i.e.\ of those $\rho\in J(S,T)$ for which the system
$(X\times Y,\rho,S\times T)$ is ergodic) is non-empty; in fact,
the  ergodic decomposition of a joining consists (a.e.) of ergodic
joinings.

If $(X,\cb,\mu,S)$ and $(Y,\cc,\nu,T)$ are isomorphic, i.e.\ for some (invertible\footnote{Without non-invertibility, we say that $T$ is a {\em factor} of $S$.}) $W:\xbm\to\ycn$, we have equivariance $W\circ S=T\circ W$, then $W$ yields the corresponding {\em graph joining} $\mu_W\in J(S,T)$ determined by
$$
\mu_W(B\times C)=\mu(B\cap W^{-1}C).$$
Then, $(X\times Y,\cb\ot\cc,\mu_W,S\times T)$ is isomorphic
to $S$ (hence $\mu_W$ is ergodic if $S$ was).
When $S=T$, we speak about {\em self-joinings} of $S$.

We recall that an automorphism $R\in{\rm Aut}\zdk$ has {\em discrete spectrum} if $L^2\zdk$ is spanned by the eigenfunctions of the unitary operator $U_R$: $f\mapsto f\circ R$ on $L^2\zdk$. Assuming ergodicity, we have:
\beq\label{jdis}
\mbox{If $R$ has discrete spectrum then each its ergodic self-joining is graphic.}\eeq

Each automorphism $S\in{\rm Aut}\xbm$ has the largest factor which has discrete spectrum.  It is called the {\em Kronecker factor} of $S$.

Joinings are also considered in topological dynamics. If $S_i$ is a homeomorphisms of a compact metric space $X_i$, $i=1,2$ then each $S_1\times S_2$-invariant, closed subset $M\subset X_1\times X_2$ with the full natural projections, is called a {\em topological joining} of $S_1$ and $S_2$.
\subsection{Discrete suspensions} Given $S\in {\rm Aut}\xbm$ and $h\in\N$, consider the probability space $(\widetilde{X},\widetilde{\mu})$, where $\widetilde{X}=X\times\{0,1,\ldots,h-1\}$ and $\widetilde{\mu}=\mu\ot \rho_h$, where $\rho_h$ is the normalized uniform measure on $\Z/h\Z \cong \{0,1,\ldots,h-1\}$. We define now $\widetilde{S}$ on $(\widetilde{X},\widetilde{\rho})$ by setting
\beq\label{susp11}
\widetilde{S}(x,j)=(x,j+1)\text{ whenever }x\in X,\; j=0,1,\ldots,h-2\eeq
and
\beq\label{susp12}
\widetilde{S}(x,h-1)=(Sx,0)\text{ for }x\in X.\eeq
It is not hard to see that $\widetilde{S}\in{\rm Aut}(\widetilde{X},\widetilde{\mu})$ and it is called the $h$-{\em discrete suspension} of $S$ (it is ergodic if and only if so is $S$).

Note that the map $(x,j)\mapsto j$ for $(x,j)\in\widetilde{X}$ yields a factor map between $\widetilde{S}$ and the rotation $\tau_h: x\mapsto x+1$ on $\Z/h\Z$. Note also that $\widetilde{S}^h(x,0)=(Sx,0)$, so in fact we can view $\widetilde{S}$ as the $h$-discrete suspension of $\widetilde{S}^h|_{X\times\{0\}}$. As a matter of fact, an automorphism $R\in {\rm Aut}\zdk$ is an $h$-discrete suspension if and only if
\beq\label{susp14}\mbox{ $R$ has the rotation $\tau_h$ as a factor.}\eeq
Indeed, if $\pi$ settles a factor map between $R$ and $\tau_h$, then set $A_0:=\pi^{-1}(\{0\})$, note that $R^h(A_0)=A_0$ and show that $R$ is isomorphic to the $h$-discrete suspension of $R^h|_{A_0}$.

Finally, consider $(m,h)=1$. Then, it is easy to see that:
\beq\label{susp15}
\mbox{The $h$-discrete suspension $\widetilde{\tau_m}$ is isomorphic to direct product $\tau_{m}\times\tau_h$.}\eeq

We refer the reader to \cite{Gl} for more information on ergodic theory, in particular, on the theory of joinings.

\subsection{Group and isometric extensions}
Given $(X,\cb,\mu,S)$ an ergodic dynamical system, consider a
measurable $\va:X\to G$ (i.e.\ $\varphi$ is a {\em cocycle}), where $G$ is a compact metric group.
Let $m_G$ denote Haar measure of $G$.
The automorphism $S_\varphi:X\times G\to X\times G$, given by
$$
S_\varphi(x,g)=(Sx,\varphi(x)g),$$
is called a {\em compact group extension} of $S$. We obtain the
dynamical system $(X\times G,\cb\ot\cb(G),\mu\ot m_G,S_\varphi)$
which need not be ergodic. For example, it is not ergodic when
$\varphi(x)=\xi(Sx)^{-1}\xi(x)$   for a measurable
$\xi:X\to G$, i.e.\ when $\varphi$ is a {\em coboundary} (indeed, the map $(x,g)\mapsto(x,\xi(x)g)$ settles an isomorphism between $S_\varphi$ and $S\times Id_G$).
Note that $\left(S_\varphi\right)^m(x,g)=(S^mx,\va^{(m)}(x)g)$, where
$\va^{(m)}(x)=\va(S^{m-1}x)\ldots\va(Sx)\va(x)$ for $m\geq1$.

\begin{Prop}[\cite{Fu}]\label{p1}
Assume that $S$ and $T$ are ergodic automorphisms on $\xbm$ and
$\ycn$, respectively. Assume moreover that $S\perp T$ and
$S_\varphi$, $T_\psi$ are ergodic group extensions of $S$ and $T$,
respectively ($\psi:Y\to H$). If the product measure
$(\mu\ot m_G)\ot(\nu\ot m_H)$ is ergodic\footnote{It means that the only situation in which we loose
disjointness is when the cocycles $\va$ and $\psi$ ``add''
a common non-trivial eigenvalue for $U_{S_\varphi}$ and $U_{T_\psi}$.}
then $S_\varphi\perp T_\psi$.
\end{Prop}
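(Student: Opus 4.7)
The plan is to follow Furstenberg's classical strategy for disjointness of group extensions. I start with an arbitrary ergodic joining $\rho\in J^e(S_\va,T_\psi)$ and aim to show it equals $(\mu\ot m_G)\ot(\nu\ot m_H)$. Projecting $\rho$ to $X\times Y$ yields an ergodic element of $J(S,T)$; by the hypothesis $S\perp T$ this projection must be $\mu\ot\nu$. Hence $\rho$ sits above $\mu\ot\nu$, and the key structural observation is that $S_\va\times T_\psi$ is nothing but the compact group extension $(S\times T)_\Phi$ of $S\times T$ by $G\times H$ with cocycle $\Phi(x,y):=(\va(x),\psi(y))$.

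Next, I disintegrate $\rho$ over $\mu\ot\nu$ to obtain a measurable family $\{\rho_{x,y}\}$ of probability measures on $G\times H$. A direct computation shows that $(S_\va\times T_\psi)$-invariance of $\rho$ translates into the relation
$$\rho_{Sx,Ty}=\bigl(L_{\va(x)}\times L_{\psi(y)}\bigr)_*\rho_{x,y},$$
where $L_g$ denotes left translation. For every non-trivial character $\chi$ of $G\times H$ (in the abelian case), define the Fourier coefficient $F_\chi(x,y):=\int\overline{\chi(g,h)}\,d\rho_{x,y}(g,h)$. The disintegration identity above yields $F_\chi(Sx,Ty)=\overline{\chi(\va(x),\psi(y))}\,F_\chi(x,y)$, and consequently the bounded measurable function
$$G_\chi(x,y,g,h):=F_\chi(x,y)\,\chi(g,h)$$
on $X\times Y\times G\times H$ is invariant under $(S\times T)_\Phi$.

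Now I invoke the ergodicity hypothesis: since $(S\times T)_\Phi$ is ergodic with respect to $(\mu\ot\nu)\ot(m_G\ot m_H)$, the function $G_\chi$ must be constant almost everywhere, and integrating in the fibre variable forces this constant to vanish (because $\chi$ has zero Haar integral when non-trivial). Therefore $F_\chi\equiv 0$ for every non-trivial $\chi$, and Fourier inversion gives $\rho_{x,y}=m_G\ot m_H$ for $(\mu\ot\nu)$-a.e.\ $(x,y)$, whence $\rho=(\mu\ot m_G)\ot(\nu\ot m_H)$, which proves $S_\va\perp T_\psi$. The main obstacle I foresee is purely expository: the argument above is written for abelian $G,H$, and in the general compact case characters must be replaced by matrix coefficients of irreducible unitary representations via Peter--Weyl, so that one works with matrix-valued Fourier coefficients $F^\pi(x,y):=\int\pi(g,h)^{*}\,d\rho_{x,y}(g,h)$ and tracks the twist by $\pi(\va(x),\psi(y))$; the structural steps are identical but the book-keeping is heavier.
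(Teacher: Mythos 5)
Your argument is correct and is essentially the classical Furstenberg proof that the paper is citing (the paper itself gives no proof, referring to \cite{Fu}): disjointness of the bases forces the projection of any joining to be $\mu\ot\nu$, the fibre disintegration satisfies the translation relation you write, and the vanishing of all non-trivial Fourier coefficients follows from the assumed ergodicity of the product extension. Your closing remark about the non-abelian case is also accurate: replacing characters by matrix coefficients $F^\pi$ of irreducible representations, the function $(x,y,g,h)\mapsto F^\pi(x,y)\pi(g,h)$ is invariant, its fibre integral vanishes by the orthogonality relations, and Peter--Weyl then gives $\rho_{x,y}=m_{G\times H}$ a.e.
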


The following results are also classical.

\begin{Lemma}\label{l3} Assume that $S_\varphi$ and $T_\psi$ are
ergodic and let $\widetilde\rho\in J^e(S_\varphi,T_\psi)$. Then
(up to a natural permutation of coordinates)
$(X\times G\times Y\times H,\cb\ot\cb(G)\ot\cc\ot\cb(H),
\widetilde{\rho},S_\varphi\times T_\psi)$ is a compact
group extension of $(X\times Y,\cb\ot\cc,\rho,S\times T)$,
where $\rho:=\widetilde{\rho}|_{X\times Y}$.

Moreover, if the relatively independent extension\footnote{This measure is defined by $$\widehat{\rho}(\widetilde{A}\times\widetilde{B})=
\int_X\E(\widetilde{A}|X)(x)\E(\widetilde{B}|X)(y)\,d\rho(x,y)\text{ for Borel subsets }\widetilde{A},\widetilde{B}\subset X\times G.$$} $\widehat{\rho}$ of $\rho$ is ergodic then $\widetilde{\rho}=\widehat{\rho}$.
\end{Lemma}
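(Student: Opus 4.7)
The plan is to exploit the commuting right action of the compact group $G\times H$ on $X\times G\times Y\times H$ defined by
\[(g_0,h_0)\cdot(x,g,y,h):=(x,gg_0,y,hh_0).\]
This action commutes with $S_\varphi\times T_\psi$, preserves the projection $(x,g,y,h)\mapsto(x,y)$, and sends each joining in $J(S_\varphi,T_\psi)$ that projects to $\rho$ to another such joining; in particular, for every $(g_0,h_0)\in G\times H$ the measure $(g_0,h_0)_\ast\widetilde\rho$ is again an ergodic element of $J^e(S_\varphi,T_\psi)$ projecting to $\rho$, since translation is a measure-preserving automorphism of $(X\times G\times Y\times H,S_\varphi\times T_\psi)$.

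For the structural assertion, I would introduce the closed subgroup
\[L:=\{(g_0,h_0)\in G\times H:\;(g_0,h_0)_\ast\widetilde\rho=\widetilde\rho\}\]
and disintegrate
\[\widetilde\rho=\int_{X\times Y}\widetilde\rho_{x,y}\,d\rho(x,y),\]
with $\widetilde\rho_{x,y}$ a probability measure on the fibre $G\times H$. Each $\widetilde\rho_{x,y}$ is invariant under the right action of $L$ and hence is a mixture of Haar measures on right $L$-cosets. Combining the $(S_\varphi\times T_\psi)$-equivariance of the disintegration with the ergodicity of $\widetilde\rho$ (and of the factor $\rho$), a standard Mackey-type argument shows that this mixture is concentrated on a single coset $C_{x,y}$ for $\rho$-a.e.\ $(x,y)$. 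A measurable selector $s:X\times Y\to G\times H$ picking one point of each $C_{x,y}$ identifies the fibres with $L$ and produces a cocycle $\widetilde\varphi:X\times Y\to L$ such that, after the obvious permutation of coordinates, $(X\times G\times Y\times H,\widetilde\rho,S_\varphi\times T_\psi)$ becomes isomorphic to the compact group extension $((X\times Y)\times L,\rho\otimes m_L,(S\times T)_{\widetilde\varphi})$, proving the first part.

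For the moreover part, I would observe that $\widehat\rho$ is the unique measure on $X\times G\times Y\times H$ with projection $\rho$ that is preserved by the whole $G\times H$-action: such invariance forces the conditional fibre measures to equal $m_G\otimes m_H$. Averaging translates of $\widetilde\rho$ against Haar measure yields
\[\int_{G\times H}(g_0,h_0)_\ast\widetilde\rho\,dm_{G\times H}(g_0,h_0),\]
which is $G\times H$-invariant, projects to $\rho$, and is a probability measure, hence equals $\widehat\rho$. This exhibits $\widehat\rho$ as an integral of the ergodic joinings $(g_0,h_0)_\ast\widetilde\rho$. Uniqueness of the ergodic decomposition together with the assumed ergodicity of $\widehat\rho$ forces $(g_0,h_0)_\ast\widetilde\rho=\widehat\rho$ for $m_{G\times H}$-a.e.\ $(g_0,h_0)$; since $\widehat\rho$ is $G\times H$-invariant, this gives $\widetilde\rho=\widehat\rho$.

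The step I expect to be hardest is the Mackey-type argument that the conditionals $\widetilde\rho_{x,y}$ live on a single $L$-coset rather than on a nontrivial mixture of cosets: this is where the ergodicity of $\widetilde\rho$ is genuinely consumed and where one must argue carefully for the measurability of both the selector $s$ and the resulting cocycle $\widetilde\varphi$ with values in the compact group $L$. The rest is a formal manipulation of Haar measures, joinings and ergodic decompositions on a principal $G\times H$-bundle.
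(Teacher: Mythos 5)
The paper offers no proof of this lemma at all --- it is introduced with the sentence ``The following results are also classical'' --- so there is nothing in the text to compare your argument against; judged on its own, your proposal is correct and is precisely the standard (Furstenberg/Zimmer/Mackey) argument, the same one underlying the references the authors invoke for group extensions elsewhere (\cite{Le-Me1}, \cite{Me}). Your treatment of the ``moreover'' part is clean and, notably, entirely independent of the first part: the identity $\widehat\rho=\int_{G\times H}(g_0,h_0)_\ast\widetilde\rho\,dm_{G\times H}$ (valid because averaging any fibre measure over right translations produces Haar measure, and $\widehat\rho$ is exactly $\rho\otimes m_{G\times H}$ after permuting coordinates) plus extremality of the ergodic $\widehat\rho$ and its right-invariance immediately gives $\widetilde\rho=\widehat\rho$. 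For the structural part, the one step you defer to --- that ergodicity forces the right-$L$-invariant fibre measures $\widetilde\rho_{x,y}$ onto a single coset, yielding the selector $s$ and the $L$-valued cocycle --- is indeed where all the content sits; it follows from the same averaging identity, since it exhibits $\widetilde\rho$ as an ergodic component of $\rho\otimes m_{G\times H}$ and the mutual singularity of the distinct components $(g_0,h_0)_\ast\widetilde\rho$ pins each fibre onto one coset. Citing this as the classical Mackey argument is legitimate given that the authors cite the entire lemma as classical. Two cosmetic remarks: orbits of the \emph{right} $L$-action are the \emph{left} cosets $cL$, so your phrase ``Haar measures on right $L$-cosets'' should read left cosets (with the fibre measure $s(x,y)_\ast m_L$ and the cocycle $\widetilde\varphi(z)=s((S\times T)z)^{-1}(\varphi\times\psi)(z)s(z)\in L$); and one should note, as you implicitly do, that right translation preserves the marginals $\mu\otimes m_G$ and $\nu\otimes m_H$, so the translates really do stay in $J^e(S_\varphi,T_\psi)$.
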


A group extension is a special case of so called {\em skew products}.
Assume that we have a measurable map $\Sigma:X\to{\rm Aut}\,\zdr$.
Then
we can consider $S_\Sigma$:
$$
S_\Sigma(x,z):=(Sx,\Sigma(x)(z))$$
which
is an automorphism of $(X\times Z,\cb\ot\cd,\mu\ot\rho)$.
If additionally, $Z$ is a compact metric space and $\Sigma(x)$, $x\in X$, are
isometric then we call $S_\Sigma$ an {\em isometric extension}.
Since the group ${\rm Iso}(Z)$ of isometries of $Z$ considered with the
uniform topology is a compact metric group
(by Arzela-Ascoli theorem), it is not hard to see that each
isometric extension is a factor of a group extension
(especially, if we assume that the isometric extension is
uniquely ergodic).\footnote{A group extension can be defined on
$X\times {\rm Iso}(Z)$, acting by the formula $(x,I)\mapsto
(Sx,\Sigma(x)\circ I)$; to obtain a factor map, fix a point  (transitive for ${\rm Iso}(Z)$) $z_0\in Z$ and
consider $(x,I)\mapsto (x,I(z_0))$.}
  If the isometric extension is ergodic, one can
choose the group extension also ergodic.

\begin{Remark}\label{r2} If $S_\varphi$ is a group extension, then for
each closed subgroup $F\subset G$, the automorphism $S_{\varphi,F}$,
given by
$$S_{\varphi,F}(x,gF):=(Sx,\varphi(x)gF),$$
is an isometric extension of $S$; it acts on the space
$X\times G/F$ considered with $\mu\ot m_{G/F}$, where
the measure $m_{G/F}$ on the homogenous space $G/F$ is
the natural image of Haar measure $m_G$. \end{Remark}

\begin{Remark}\label{r:efi} Each finite extension is isometric. It is a
factor of a group extension by $G$, where $G$ is finite.\end{Remark}

\subsection{Odometers}
{\em Odometers} are given by the inverse limits of cyclic groups:
we have $n_{t-1}|n_{t}$ for each $t\geq1$ and $$X=H_{(n_t/n_{t-1})}:={\rm liminv}\, \Z/n_t\Z$$ with the rotation $R$ by $1$ on each coordinate. If for each $t$, $n_{t+1}/n_t=\la\geq2$, then we speak about $\la$-{\em odometer} and denote it by $H_\lambda(=H_{(\lambda)})$.

Each odometer $(X,R)$ is uniquely ergodic (with the unique measure being Haar measure $m_X$ of $X$). Then $(X,m_X,R)$ has discrete spectrum with the group of eigenvalues given by all roots of unity of degree $n_t$, $t\geq1$.
Furthermore, $R^r$ is ergodic (uniquely ergodic) iff $(r,n_t)=1$ for each $t\geq1$. In this case $R^r$ and $R$ are isomorphic as both are ergodic and their spectra are the same, so the claim follows by the Halmos-von~Neumann theorem (e.g.\ \cite{Gl}).
It easily follows that whenever $p,q\in\mathscr{P}$ are different prime numbers (by $\mathscr{P}$ we denote the set of prime numbers) not dividing any $n_t$ then each $\rho\in J^e(R^p,R^q)$ is a graph joining (of an isomorphism between $R^p$ and $R^q$), see e.g.\ \cite{Le-Me1}.

\subsubsection{$h$-discrete suspensions of odometers}
Assume that $(h,n_t)=1$ for each $t\geq1$. Let $\widetilde{R}$ denote the $h$-discrete suspension of $R$.  Then (cf.~\eqref{susp15}), we obtain that
\beq\label{suspodom}
\mbox{$\widetilde{R}$ is isomorphic to $R\times\tau_h$.}\eeq
Indeed, both automorphisms have discrete spectrum (and are ergodic). The group of eigenvalues of $U_{\widetilde{R}}$ is equal to $\{e^{2\pi ij/(h n_t)}:\:j\in\Z,t\geq0\}$, while the group of eigenvalues of $U_{R\times\tau_h}$ is generated by $\{e^{2\pi ij/n_t}:\:j\in\Z,t\geq0\}$ and the group of $h$-roots of unity. It follows that $U_{R\times\tau_h}$ and $U_{\widetilde{R}}$ have the same group of eigenvalues, hence again by the Halmos-von Neumann theorem, they are isomorphic.
\subsubsection{Group extensions of odometers - special assumptions} We will assume that $R$    is an odometer with ``small spectrum'', that is, the set $\{p\in\mathscr{P}:\:p|n_t\text{ for some }t\geq1\}$ is finite (this will always be the case for $\la$-odometers). Moreover, we will assume that $R_\varphi$ (more precisely, $U_{R_\varphi}$) has continuous spectrum on the space $L^2(X\times G,m_X\ot m_G)\ominus L^2(X,m_X)$.~\footnote{If $V$ is a unitary operator on a Hilbert space $\mathscr{H}$ then it is said to have {\em continuous spectrum} if for all
$x\in \mathscr{H}$ the spectral measure $\sigma_x$ is continuous;
the latter measure (on $\bs^1$) is determined by its Fourier
transform: $\widehat{\sigma}_x(m):=\int_{\bs^1}z^m\,d\sigma_x(z)=
\langle V^mx,x\rangle$ for all $m\in\Z$.}   It follows that
if $p\in\mathscr{P}$ is sufficiently large then $(R_\varphi)^p$
is ergodic   (uniquely ergodic if $\varphi$ is continuous).
Furthermore, we assume that if $p,q\in\mathscr{P}$ are
different and sufficiently large then the only ergodic joinings
between $(R_\varphi)^p$ and $(R_\varphi)^q$ are relatively
independent extensions of isomorphisms between
$R^p$ and $R^q$.\footnote{We recall that if $W:X\to X$ settles
an isomorphism of $R^p$ and $R^q$, then it yields an ergodic
joining $(m_X)_W(A\times B):=m_X(A\cap W^{-1}B)$;
its relatively independent extension $\widehat{(m_X)}_W$ is
defined by
$$
\widehat{(m_X)}_W(\widetilde{A}\times\widetilde{B})=
\int_X\E(\widetilde{A}|X)(x)\E(\widetilde{B}|X)(Wx)\,dm_X(x)\text{ for Borel subsets }\widetilde{A},\widetilde{B}\subset X\times G.$$}
By all these assumptions $(X\times G\times X\times G,
\widehat{(m_X)}_W, (R_\varphi)^p\times(R_\varphi)^q)$ is
ergodic and has the same eigenvalues as the odometer $R$.

\begin{Lemma}\label{l4} Assume that $\varphi$ is continuous.
Under the above assumptions for each (real-valued)
$F\in C(X\times G)$ such that $F\perp L^2(X,m_X)$, we have
$$\frac1N\sum_{n\leq N}F((R_\varphi)^{pn}(x,g))
F((R_\varphi)^{qn}(x,g))\to0$$
for each $(x,g)\in X\times G$ and different
primes $p,q$ sufficiently large.\end{Lemma}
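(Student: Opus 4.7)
The plan is to show that every weak-$\ast$ subsequential limit $\mu$ of the empirical measures
$$A_N:=\frac1N\sum_{n\leq N}\delta_{T^n((x,g),(x,g))},\qquad T:=(R_\va)^p\times(R_\va)^q,$$
on the compact space $(X\times G)^2$ satisfies $\int F\ot F\,d\mu=0$. Since $F\ot F$ is continuous, this forces
$A_N(F\ot F)=\frac1N\sum_{n\leq N}F((R_\va)^{pn}(x,g))\,F((R_\va)^{qn}(x,g))\to 0$, which is exactly what we want.

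First I fix $p,q$ large enough that, by the standing assumptions, both $(R_\va)^p$ and $(R_\va)^q$ are uniquely ergodic (with unique invariant measure $m_X\ot m_G$) and that every element of $J^e((R_\va)^p,(R_\va)^q)$ is a relatively independent extension $\widehat{(m_X)_W}$ of a graph joining $(m_X)_W$ coming from an isomorphism $W:X\to X$ between $R^p$ and $R^q$. Any weak-$\ast$ subsequential limit $\mu$ of $(A_N)$ is $T$-invariant; by unique ergodicity of $(R_\va)^p$ and $(R_\va)^q$, each of its two marginals on $X\times G$ coincides with $m_X\ot m_G$, hence $\mu\in J((R_\va)^p,(R_\va)^q)$.

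Next, decompose $\mu=\int \widetilde\rho\,d\xi(\widetilde\rho)$ into ergodic joinings. By the hypothesis recalled above, each $\widetilde\rho$ in the decomposition is of the form $\widehat{(m_X)_W}$ for some isomorphism $W:X\to X$ between $R^p$ and $R^q$. Using the formula for $\widehat{(m_X)_W}$ (i.e.\ conditional independence of the two $G$-fibres over the base $X\times X$), we compute
$$\int F\ot F\,d\widehat{(m_X)_W}=\int_X\E(F\mid X)(x)\,\E(F\mid X)(Wx)\,dm_X(x)=0,$$
because $F\perp L^2(X,m_X)$ gives $\E(F\mid X)\equiv 0$ almost everywhere. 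Integrating this identity against $\xi$ yields $\int F\ot F\,d\mu=0$, as required.

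The main conceptual input is the classification of ergodic joinings between $(R_\va)^p$ and $(R_\va)^q$, which is built into the hypothesis of the lemma; once this is granted, the orthogonality $F\perp L^2(X,m_X)$ kills the integral on every ergodic component fibrewise, and unique ergodicity of $(R_\va)^p,(R_\va)^q$ promotes this integrated vanishing to the pointwise vanishing of the Cesàro averages along every starting point $(x,g)$.
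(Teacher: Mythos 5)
Your proof is correct and follows essentially the same route as the paper: pass to a weak-$\ast$ limit of the empirical measures, identify it as a joining of $(R_\va)^p$ and $(R_\va)^q$ via unique ergodicity, invoke the standing classification of ergodic joinings as relatively independent extensions of graphs, and observe that $\E(F\mid X)=0$ kills the integral. The only (harmless) difference is that you ergodically decompose the limit joining upstairs directly, whereas the paper first notes that the projection to $X\times X$ is an ergodic graph joining (every point being generic for the odometer) and then lifts via the second part of Lemma~\ref{l3}.
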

\begin{proof} First notice that any accumulation point $\rho$ of $\left(\frac1N\sum_{n\leq N}\delta_{(R^p\times R^q)^n(x,x)}\right)$ is ergodic (cf.\ e.g.\ \cite{Ku-Le}), hence  is graphic. It follows that
any accumulation point $\widetilde\rho$ of $\left(\frac1N\sum_{n\leq N}\delta_{((R_\varphi)^p\times(R_\varphi)^q)^n((x,g),(x,g))}\right)$ will be the relatively independent extension of an isomorphism between $R^p$ and $R^q$ in view of the second part of Lemma~\ref{l3}. By the definition of the relatively independent
extension, $\int F\ot F\,d\widehat\rho=0$ and the result follows.\end{proof}

\subsection{Application - KBSZ criterion}\label{s:kbszcrit}
Recall the following result (to which we refer as the KBSZ criterion)
about the orthogonality of numerical sequences with
bounded, multiplicative functions:

\begin{Th}[K\'atai \cite{Ka},
Bourgain, Sarnak and Ziegler \cite{Bo-Sa-Zi}]\label{t:kbsz}
Let a bounded sequence $(a_n)\subset\C$ satisfy
$$
\lim_{N\to\infty}\frac1N\sum_{n\leq N}a_{pn}\ov{a}_{qn}=0$$
for each $p\neq q$ sufficiently large prime numbers. Then
$$
\lim_{N\to\infty}\frac1N\sum_{n\leq N}a_n\bfu(n)=0$$
for each bounded, multiplicative function $\bfu:\N\to\C$.
\end{Th}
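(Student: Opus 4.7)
The plan is to exploit the multiplicativity of $\bfu$ together with a Tur\'an--Kubilius-type averaging over large prime divisors in order to reduce $\frac{1}{N}\sum_{n\le N}a_n\bfu(n)$ to correlations of the form $\frac{1}{M}\sum_{m\le M}a_{pm}\overline{a_{qm}}$, which vanish by hypothesis. Choose $P_0$ past which the hypothesis is valid for all distinct primes, and for a parameter $P>P_0$ set
\[
\mathcal{Q}=\mathcal{Q}_P:=\{p\in\mathscr{P}:\,P_0<p\le P\},\qquad
\Theta:=\sum_{p\in\mathcal{Q}}\frac{1}{p},\qquad
\omega(n):=\#\{p\in\mathcal{Q}:\,p\|n\}.
\]
A routine second-moment calculation gives the Tur\'an--Kubilius-type estimate $\sum_{n\le N}(\omega(n)-\Theta)^2\ll N\Theta + O(|\mathcal{Q}|)$; note that $\Theta=\Theta_P\to\infty$ as $P\to\infty$ because $\sum_p 1/p$ diverges.

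First, Cauchy--Schwarz in $n$ yields $\bigl|\Theta\sum_{n\le N}a_n\bfu(n)-\sum_{n\le N}\omega(n)a_n\bfu(n)\bigr|\ll N\sqrt{\Theta}$ (plus terms of lower order in $\mathcal{Q}$), so after dividing by $N\Theta$ it suffices to control $\frac{1}{N\Theta}\sum_{n\le N}\omega(n)a_n\bfu(n)$ up to error $O(1/\sqrt{\Theta})$. Using the identity $\bfu(pm)=\bfu(p)\bfu(m)$ for $(p,m)=1$, the latter rewrites as
\[
\frac{1}{N\Theta}\sum_{p\in\mathcal{Q}}\bfu(p)\sum_{\substack{m\le N/p\\(p,m)=1}}a_{pm}\bfu(m).
\]
Viewing the double sum as an outer sum over $m\le N$ with inner sum over $p$, and applying Cauchy--Schwarz in $m$, the modulus squared is bounded (up to a lower-order coprimality correction) by
\[
N\sum_{p,q\in\mathcal{Q}}\bfu(p)\overline{\bfu(q)}\sum_{m\le N/\max(p,q)}a_{pm}\overline{a_{qm}}.
\]
The diagonal $p=q$ contributes trivially at most $N^2\Theta$, while for each of the finitely many off-diagonal pairs $(p,q)\in\mathcal{Q}^2$ the hypothesis forces the inner sum to be $o_N(N)$, so the full off-diagonal contribution is $o_N(1)\cdot N|\mathcal{Q}|^2$.

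Combining everything and dividing through by $N\Theta$, one obtains
\[
\limsup_{N\to\infty}\Bigl|\frac1N\sum_{n\le N}a_n\bfu(n)\Bigr|\ll \frac{1}{\sqrt{\Theta}},
\]
and letting $P\to\infty$ (so $\Theta\to\infty$) yields the conclusion. The chief technical subtlety is the order of the two limits: the correlation hypothesis supplies only $o_N(N)$ for each \emph{fixed} pair of primes, with no uniformity in $(p,q)$, so one must first freeze $\mathcal{Q}$ finite, send $N\to\infty$, and only afterwards let $P$ (hence $\Theta$) tend to infinity. The diagonal contribution $N^2\Theta$ in the Cauchy--Schwarz step is essentially sharp and is precisely what dictates the final savings of order $1/\sqrt{\Theta}$; minor but necessary bookkeeping is required to track the coprimality conditions $(p,m)=1$, which only affect lower-order terms since primes in $\mathcal{Q}$ are large.
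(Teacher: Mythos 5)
The paper does not prove Theorem~\ref{t:kbsz}; it imports it from \cite{Ka} and \cite{Bo-Sa-Zi}, so there is no internal proof to compare against. Your strategy is exactly the standard K\'atai/Bourgain--Sarnak--Ziegler argument (Tur\'an--Kubilius to replace $\sum_n a_n\bfu(n)$ by $\Theta^{-1}\sum_n\omega(n)a_n\bfu(n)$, then multiplicativity and Cauchy--Schwarz in $m$ to reach the two-prime correlations), and you correctly identify the essential point about the order of limits: freeze the finite set $\mathcal{Q}$, let $N\to\infty$ to kill the finitely many off-diagonal terms, and only then let $\Theta\to\infty$.

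There is, however, one genuine gap, precisely at the spot you dismiss as ``minor bookkeeping''. Because you define $\omega(n)$ via exact divisibility $p\,\|\,n$, the identity $\bfu(pm)=\bfu(p)\bfu(m)$ is exact, but the inner sums produced by the Cauchy--Schwarz step carry the condition $(m,pq)=1$. Removing that condition to match the hypothesis costs, for a fixed pair $p<q$, an error of size $O(N/(pq))$ (the terms with $p\mid m$, $m\le N/q$), and summing over pairs gives a total correction to $\sum_m\bigl|\sum_p\bfu(p)a_{pm}\bigr|^2$ of order
\[
N\sum_{p<q\in\mathcal{Q}}\frac{1}{pq}\;\asymp\;N\Theta^2,
\]
which is \emph{larger} than the diagonal term $O(N\Theta)$ and exactly the order that would be needed to conclude; after taking square roots and dividing by $N\Theta$ it contributes $O(1)$ rather than $O(1/\sqrt{\Theta})$, so the argument as written proves nothing. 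These error terms are correlations of $a$ along $p^2m'$ versus $pqm'$, which the hypothesis does not control, so they cannot be salvaged directly. The standard repair is to define $\omega(n):=\#\{p\in\mathcal{Q}:p\mid n\}$ instead: then $\sum_n\omega(n)a_n\bfu(n)=\sum_{p}\sum_{m\le N/p}a_{pm}\bfu(pm)$ with no coprimality condition, and the substitution $\bfu(pm)\rightsquigarrow\bfu(p)\bfu(m)$ is made \emph{before} Cauchy--Schwarz, at a cost of $O\bigl(N\sum_{p\in\mathcal{Q}}p^{-2}\bigr)=O(N/P_0)$, which really is lower order after dividing by $N\Theta$. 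With that rearrangement the off-diagonal sums are exactly those of the hypothesis and your computation goes through, yielding $\limsup_N\bigl|\frac1N\sum_{n\le N}a_n\bfu(n)\bigr|\ll 1/\sqrt{\Theta}$. (A small typo aside: the off-diagonal contribution should read $o_N(1)\cdot N^2|\mathcal{Q}|^2$, not $o_N(1)\cdot N|\mathcal{Q}|^2$; your final conclusion is unaffected.)
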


In the context of topological dynamical systems, that is, given $(X,S)$,
we use this result with $a_n=f(S^nx)$ with $f\in C(X)$ and $x\in X$.
It is not hard to see that this criterion applies
for any uniquely ergodic $(X,S)$ with the property that
$S^p\perp S^q$ (disjointness is meant if we consider the unique
invariant measure $\mu$) and we consider $f\in C(X)$ with
$\int_Xf\,d\mu=0$ and arbitrary $x\in X$. However, even if we do
not have disjointness of sufficiently large (prime) powers, we
can apply this criterion for {\bf particular} continuous functions
if we control the limit joinings, as, for example, in Lemma~\ref{l4}.

\section{Basics on substitutions of constant length} \label{s:subst}
Let $\A$ be an alphabet (a non-empty finite set). Denote
$\A^\ast:=\bigcup_{m\geq 0}\A^m$, where $\A^m$ stands
for the set of words $w=a_0a_1\ldots a_{m-1}$ over $\A$ of
length $|w|$ equal to~$m$ ($\A^0$ consists only of the empty word). Fix $\N\ni\la\geq2$.
By a {\em substitution of (constant) length} $\la$ we mean a map
$$
\theta:\A\to \A^\lambda$$
which we also write as $\theta(a)=\theta(a)_0\theta(a)_1\ldots\theta(a)_{\la-1}$ for $a\in\A$.
Via the concatenation of words, there is a natural extension of $\theta$ to a map from $\A^m$ to $\A^{m\lambda}$ (for each $m\geq1$) or from $\A^\ast$ to itself,
or even from $\A^{\Z}$ to itself. In particular, we can iterate
$\theta$ $k$ times:
$$
\A\stackrel{\theta}{\to} \A^{\lambda}\stackrel{\theta}{\to}\ldots
\stackrel{\theta}{\to}\A^{\lambda^k}$$
which can be viewed as the substitution $\theta^k$
(the $k$th-iterate of $\theta$) of length $\lambda^k$:
$$
\theta^k:\A\to \A^{\lambda^k}.$$

The following formula is well-known and follows directly by definition:
\beq\label{wzorkoc}
\theta^{k+\ell}(a)_{j'\la^k+j}=\theta^k(\theta^{\ell}(a)_{j'})_j\eeq
for each $a\in\A$, $j'<\la^\ell$, $j<\la^k$.
Indeed, $|\theta^{\ell}(a)|=\la^\ell$ and consider
$\theta^{\ell}(a)_{j'}$, i.e. the $j'$-th letter in the word
$\theta^{\ell}(a)$. We now let act $\theta^k$ on
$\theta^{\ell}(a)$  which transforms letters in the word $\theta^{\ell}(a)$
into blocks of length~$\la^k$, in particular the $j'$-th letter of
$\theta^\ell(a)$ becomes the $j'$-th block of length $\la^k$. So
counting $j$-th letter in this block is the same as counting
$(j'\la^k+j)$-th letter in $\theta^{k+\ell}(a)$.

The {\em subshift} $X_\theta\subset A^{\Z}$ is determined by all
words that appear in $\theta^k(a)$ for some $k\geq1$ and $a\in\A$:
\beq\label{su0}
X_\theta:=\begin{array}{c}\left\{x\in\A^{\Z}:\:
\mbox{for each $r<s$, we have}\right.\\ x[r,s]=\theta^{k}(a)[i,i+s-r-1]\\
\mbox{for some $k\geq1$, $a\in\A$ and $0\leq i<\la^k-(s-r)$}\}.
\end{array}
\eeq
That is, $X_\theta$ is closed and invariant for the left shift $S$
acting on $\A^{\Z}$.
Then, clearly, we have
\beq\label{su1}
X_{\theta^k}\subset X_{\theta}\text{ for each }k\geq1.\eeq
Note also that for each $a\in\A$, there exist $k,\ell\geq1$ such that
$$
\theta^k(a)_0=\theta^{k+\ell}(a)_0$$
from which we deduce that there are a letter $a\in\A$ and
$\ell\geq1$ such that
\beq\label{su2}
\theta^\ell(a)_0=a_0.\eeq
Hence, by iterating the substitution $\theta^\ell$, we obtain
a fixed point $u\in\A^{\N}$ for the map $\theta^\ell:
\A^{\N}\to \A^{\N}$. This, similarly to the RHS of~\eqref{su0}
defines a subshift $X_u\subset\A^{\Z}$ for which we have
$X_u\subset X_\theta$, cf.\ Remark~\ref{r:podszifty}.
In general, we do not have equalities in~\eqref{su1}, and
(the more) $X_u$ is a proper subshift of $X_{\theta^\ell}$ above.
The situation changes if we assume that $\theta$ is {\em primitive},
that is, when there exists $k\geq1$ such that
\beq\label{su3}
\mbox{for each $a\in\A$ the word $\theta^k(a)$ contains all
letters from $\A$.}
\eeq
Then, we have equalities in \eqref{su1}, and moreover,
for each $\ell\geq1$ if $u\in\A^{\N}$ satisfies
$\theta^{\ell}(u)=u$ then
$X_u=X_\theta$. Moreover, we have
\begin{Prop}\label{p:minimality}
Assume that $\theta$ is a primitive substitution of constant
length $\la$. Then $(X_\theta,S)$ is minimal. Moreover, there
exists exactly one invariant measure $\mu_\theta$ on $X_\theta$.
\end{Prop}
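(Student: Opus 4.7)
I will establish minimality and unique ergodicity separately, both by exploiting primitivity~\eqref{su3}.

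For minimality, the plan is to show that every word $w$ appearing in $X_\theta$ reappears with bounded gaps in every $y\in X_\theta$; this characterizes minimality for subshifts. By~\eqref{su0}, $w$ is a subword of $\theta^k(a)$ for some $k\geq 1$ and $a\in\A$. Primitivity supplies $N\geq 1$ such that $\theta^N(b)$ contains every letter of $\A$ for each $b\in\A$; applying $\theta^k$, the word $\theta^{k+N}(b)=\theta^k(\theta^N(b))$ contains $\theta^k(a)$ as a subword, and hence contains $w$, for every $b\in\A$. Now fix $y\in X_\theta$ and any subword $y[r,s]$ of length at least $2\la^{k+N}$. By~\eqref{su0}, $y[r,s]$ embeds into some $\theta^m(c)$, necessarily with $\la^m\geq 2\la^{k+N}$, so $m\geq k+N$. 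Then $\theta^m(c)$ decomposes as the concatenation of the $\la^{m-k-N}$ consecutive blocks $\theta^{k+N}(b_j)$, where $b_0 b_1\cdots b_{\la^{m-k-N}-1}=\theta^{m-k-N}(c)$; since $y[r,s]$ has length exceeding twice a single block length, it must fully contain at least one $\theta^{k+N}(b_j)$, and hence contains $w$. This bounds the gaps between occurrences of $w$ in any $y\in X_\theta$ by $2\la^{k+N}$, proving minimality.

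For unique ergodicity, the plan is to show that every word $u$ has a uniform frequency in every $y\in X_\theta$; together with the density of cylinder indicators in $C(X_\theta)$ this yields uniform convergence of Birkhoff averages, and hence unique ergodicity. For letter frequencies, consider the composition matrix $M$ indexed by $\A$ with $M_{a,b}$ equal to the number of occurrences of $a$ in $\theta(b)$. By~\eqref{su3} some power $M^k$ is strictly positive, so Perron--Frobenius produces a unique positive probability eigenvector $v$ with eigenvalue $\la$, and the normalized columns of $M^k$ converge to $v$ as $k\to\infty$; thus the frequency of each letter in $\theta^k(b)$ converges to $v$ independently of $b$. For words $u$ of length $\ell\geq 2$, the same analysis applies after enlarging the finite-dimensional framework (for instance, by coding the $\ell$-words appearing in $X_\theta$ as a new alphabet and reading off their counts in $\theta^k(b)$), yielding a uniform limit frequency for each such $u$.

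The main obstacle is passing from these $\theta^m(c)$-frequencies to uniform convergence of Birkhoff averages along an arbitrary $y\in X_\theta$. Given a prescribed $\varepsilon>0$, fix $m$ large enough that the frequency of $u$ in $\theta^m(b)$ is within $\varepsilon$ of its limit, uniformly in $b\in\A$. For large $N$, use~\eqref{su0} to embed $y[0,N-1]$ into some $\theta^M(c)$ and decompose $\theta^M(c)$ into its consecutive $\theta^m$-subblocks: the two incomplete blocks at the endpoints of the embedded window contribute at most $2\la^m$ to the count of occurrences of $u$, straddling occurrences across block boundaries contribute $O(\ell N/\la^m)$, and each complete block contributes its $\theta^m$-frequency, which is $\varepsilon$-close to the limit. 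Letting $N\to\infty$ with $m$ fixed, the boundary and straddling terms become negligible compared with $N$, yielding uniform convergence of $\frac{1}{N}\sum_{n<N}f(S^n y)$ for $f$ the indicator of the cylinder determined by $u$. By density this forces $M(X_\theta,S)$ to be a singleton, and its unique element is the announced $\mu_\theta$.
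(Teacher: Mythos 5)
The paper does not prove this proposition at all: it is quoted as a classical fact (Michel's theorem on unique ergodicity of primitive substitution systems; see \cite{Qu}, Propositions 5.5--5.6, or \cite{Fo}), so there is no in-paper argument to compare against. Your proof is the standard textbook one and is essentially correct: syndetic occurrence of every word of the language in every point gives minimality, and uniform convergence of word frequencies via Perron--Frobenius gives unique ergodicity. Two places deserve tightening. First, for words of length $\ell\geq 2$ you cannot literally read the counts off the incidence matrix of $\theta$; you need the induced substitution $\theta_\ell$ on the alphabet of $\ell$-words of the language (with $\theta_\ell(w)_j=\theta(w)[j,j+\ell-1]$ for $j<\la$), and you must check that $\theta_\ell$ is again primitive --- this is true but requires a short argument (every $\ell$-word of the language occurs as a window of $\theta^k(u_0)$ at a position $<\la^k$, and $u_0$ occurs in $\theta^{k'}(b)$ for every $b$), which your phrase ``the same analysis applies'' glosses over. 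Second, the straddling occurrences contribute up to $(\ell-1)$ per block boundary, i.e.\ a quantity of order $\ell N/\la^m$; this does \emph{not} become negligible as $N\to\infty$ with $m$ fixed (it is a fixed fraction of $N$), but it is at most $\varepsilon N$ once $m$ is chosen with $\la^m\geq \ell/\varepsilon$, so the conclusion survives with the quantifiers in the order ``first $m$ large, then $N$ large.'' With these two points made explicit the argument is complete.
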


In what follows, we assume that $\theta:\A\to\A^{\la}$ is primitive. Under this assumption,
it follows directly by the Perron-Frobenius theorem that,  for any letter $a \in \mathbb{A}$, the density
\begin{align*}
  \delta_a := \lim_{n\to \infty} \frac{\#\{j<\la^n: \theta^n(a')_j = a\}}{\la^n}
\end{align*}
exists and is independent of $a' \in \mathbb{A}$. More precisely, first,  denote by $M(\theta) \in \Z^{|\mathbb{A}| \times |\mathbb{A}|}$ the incidence matrix of $\theta$, i.e. $M_{a,a'}(\theta) := |\theta(a')|_a := $ the number of occurrences of $a$ in $\theta(a')$.
The vector $(\delta_a)_{a\in \mathbb{A}}$ is then the unique right eigenvector for the (maximal) eigenvalue $\la$, i.e.
\beq\label{wzor100}
  \la \cdot \delta_{a} = \sum_{a'\in \mathbb{A}} M(\theta)_{a,a'} \cdot \delta_{a'}
\eeq
holds for all $a \in \mathbb{A}$ and uniquely defines $(\delta_a)_{a\in \mathbb{A}}$, {\cite{Qu}}.

As $\theta$ is primitive, we can also assume that for some $a_0\in\A$, we have
\beq\label{su4}
\theta(a_0)_0=a_0.\eeq
By iterating $\theta$ at $a_0$ we obtain $u\in\A^{\N}$ such that $\theta(u)=u$ (and $X_u=X_\theta$).

We now recall the definition of the {\em height} $h(\theta)$ of $\theta$ following \cite{Qu}. Therefore, for $k\geq0$, set
\beq\label{defH0}
  S_k=S_k(\theta) := \{ r \geq 1 : u[k+r] = u[k]\}
\eeq
and
\begin{align}\label{wyso}
  g_k := \gcd S_k.
\end{align}
This allows us to define
\begin{align}
  h(\theta) := \max\{m \geq 1 :\: (m,\lambda) = 1, m | g_0\}.
\end{align}
We list now some basic properties of $h=h(\theta)$.
\begin{enumerate}
  \item $1 \leq h \leq |\A|$.
  \item $h = \max \{m \geq 1 :\: (m,\lambda) = 1, m | g_k\}$ for every $k\geq 0$.
  \item If, for $j = 0,\ldots, h-1$, we consider the set
    \begin{align*}
      C_j := \{u[j+rh]:\: r\geq0\},
    \end{align*}
  then, by 2., the letter $u[j]$ can be equal to some $u[s]$ only if $s\in C_j$.
  Hence, the sets $C_j$ form a partition of $\A$. If we identify, in $u$, the letters in the same set $C_j$, $j=0,\ldots,h-1$, we thus obtain a periodic (of period~$h$) sequence (cf.~\eqref{susp14}), and $h$ is the largest integer $\leq |\A|$, coprime to $\la$, with this property.
\end{enumerate}

Moreover, if by $\A^{(h)}$ we denote the set $\{u[mh(\theta),mh(\theta)+h(\theta)-1]:\:m\geq0\} \subseteq \A^h$ (which is of course finite) then we can define $\eta:\A^{(h)}\to(\A^{(h)})^{\la}$ by setting
$$
\eta(w)=\theta(w)[0,h(\theta)-1]\theta(w)[h(\theta),2h(\theta)-1]
\ldots \theta(w)[(\la-1)h(\theta),\la h(\theta)-1].$$
Then $\eta$ turns out to be a primitive substitution and $h(\eta) = 1$. Moreover, in view of 3.\ above, as a (measure-theoretic) dynamical system, $(X_\theta,\mu_\theta,S)$ is isomorphic to the $h(\theta)$-discrete suspension $\widetilde{S}$ of $(X_\eta,\mu_\eta,S)$. In what follows we will denote $\eta$ by $\theta^{(h)}$.

If $\theta$ is primitive than so is $\theta^k$ (for each $k\geq1$). Take now $u$ a fixed point for $\theta$. This  is also a fixed point of $\theta^k$. Moreover, $h=h(\theta)=h(\theta^k)$. It follows easily that
\beq\label{hei1} (\theta^{(h)})^k=(\theta^k)^{(h)}.\eeq
Note also that if $(a_0,\ldots, a_{h-1})\in \A^{(h)}$ then
$$
\theta^{(h)}(a_0,\ldots,a_{h-1})=\theta(a_0)\theta(a_1)\ldots\theta(a_{h-1}),$$
where the block on the RHS is dived into blocks of length $h$  which are elements of $\A^{(h)}$. The $j$-th such element ($j<\la$) has the beginning at the position $jh$ which is of the form
$$
jh=i\lambda+y,\; 0\leq y<\lambda$$
with $i=[jh/\lambda]$ (and $y=jh-i\lambda$). It follows that if
$$
\theta^{(h)}(a_0,\ldots,a_{h-1})_j=(b_0,\ldots,b_{h-1})$$
then $b_0=\theta(a_i)_{jh-i\la}$. Replacing in this reasoning $\theta$ by $\theta^k$ and using~\eqref{hei1}, we obtain the following: If $(a_0,\ldots, a_{h-1})\in A^{(h)}$, $k\geq1$, $j<\la^k$ and
$$
(\theta^{(h)})^k(a_0,\ldots,a_{h-1})_j=(b_0,\ldots,b_{h-1})$$
then
\beq\label{hei2}
b_0=\theta^k(a_i)_{jh-i\la^k},\text{ where } i=[jh/\la^k].\eeq

Following \cite{De}, a substitution $\theta$ is called {\em pure} if $\theta=\theta^{(h)}$, i.e.\ if $h(\theta)=1$.

Denote by $c=c(\theta)$ the {\em column number} of $\theta$.
Recall its definition: we consider iterations
$\theta^k:\A\to \A^{\lambda^k}$ and each time we consider sets
$\{\theta^k(a)_j:\:a\in \A\}$ for $j=0,\ldots,\la^k-1$,
where $\theta^k(a)=\theta^k(a)_0\theta^k(a)_1\ldots\theta^k(a)_{\la^k-1}$.
Then $c(\theta)$ is defined as
the  minimal cardinality (we run over $k\geq1$ and $0\leq j\leq\la^k-1$) of such sets.

In what follows we will make use of the following observation.
    \begin{Lemma}\label{l:hc}
      Let $\theta: \A \to \A^{\la}$ be a primitive substitution with
      $c(\theta) = 1$, then $\theta$ is pure, i.e.\ we have $h(\theta) = 1$.
    \end{Lemma}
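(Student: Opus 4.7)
The plan is to exploit the interaction between a constant column of some iterate $\theta^k$ (which is exactly what $c(\theta)=1$ provides) and the arithmetic structure of return times in a fixed point. Precisely, $c(\theta)=1$ supplies an integer $k\geq 1$, a position $j_0$ with $0\leq j_0<\la^k$, and a letter $b\in\A$ such that $\theta^k(a)_{j_0}=b$ for every $a\in\A$. By \eqref{wzorkoc}, this constant column propagates to every further iterate: for $\ell\geq 0$ and $j'<\la^{\ell}$, one has $\theta^{k+\ell}(a)_{j'\la^k+j_0}=\theta^k(\theta^{\ell}(a)_{j'})_{j_0}=b$, so all iterates $\theta^{k+\ell}$ still have column $j_0$ (and many others) identically equal to $b$.

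Next I would pick a fixed point $u\in\A^{\N}$ of a power $\theta^{\ell_0}$ (such a fixed point exists by~\eqref{su2}) and choose $K$ to be any multiple of $\ell_0$ with $K\geq k$. Then $\theta^K(u)=u$, so for every $i\geq 0$ the block $u[i\la^K,(i+1)\la^K-1]$ equals $\theta^K(u[i])$; reading off position $j_0$ in each of these blocks, and using that column $j_0$ of $\theta^K$ is identically $b$, yields $u[i\la^K+j_0]=b$ for all $i\geq 0$. In particular $i\la^K\in S_{j_0}$ for every $i\geq 1$, whence $g_{j_0}=\gcd S_{j_0}$ divides $\gcd\{i\la^K:i\geq 1\}=\la^K$.

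The finish is immediate: property~2 in the list following~\eqref{wyso} gives $h(\theta)\mid g_{j_0}$, hence $h(\theta)\mid\la^K$; combined with the defining coprimality $(h(\theta),\la)=1$, this forces $h(\theta)=1$, i.e.\ $\theta$ is pure. The only mild subtlety is the bookkeeping needed to line up the exponent $k$ coming from $c(\theta)=1$ with the period $\ell_0$ of a convenient fixed point, which is handled exactly as above by passing to a sufficiently large multiple of $\ell_0$ and using that constant columns only proliferate under further iteration.
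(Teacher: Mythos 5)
Your argument is correct and is essentially the paper's own proof: both extract a constant column of some iterate $\theta^k$ from $c(\theta)=1$, observe that this forces a return time that is a power of $\la$ in some $S_j$ of the fixed point, and conclude $h(\theta)\mid g_j\mid \la^K$ together with $(h(\theta),\la)=1$. The only difference is bookkeeping — the paper uses the fixed point of $\theta$ itself (available by the standing assumption~\eqref{su4}) and needs only the single return time $\la^k\in S_j$, whereas you pass to a fixed point of a power and collect all multiples of $\la^K$; this changes nothing of substance.
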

    \begin{proof}
By assumption, there exist $a\in\A$, $k \in \mathbb{N}$ and $0 \leq j < \lambda^{k}$ such that
      $\theta^k(x)_j = a$ for all $x \in \A$.
Recalling that the fixpoint of $\theta$ is $u$
      we find that $u[j] = u[j + \la^k] = a$. This shows that
      $\la^k \in S_j(\theta)$ and thus, $g_j\mid \la^k $. As $h(\theta)|g_j$ and is coprime with $\lambda$, we have
      $h(\theta) = 1$.
    \end{proof}

In fact, more is true:

\begin{Lemma}\label{le:joining_height0}
Let $\theta:\A\to\A^{\la} $ be a primitive substitution. Then $$c(\theta)=h(\theta)\cdot c(\theta^{(h)}).$$ In particular,  $h(\theta)\mid c(\theta)$.\end{Lemma}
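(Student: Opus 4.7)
The plan is to relate columns of $\theta^k$ to those of $\tau^k := (\theta^{(h)})^k$ via formula~\eqref{hei2}, and exploit the partition of $\A$ into the classes $C_0, \ldots, C_{h-1}$.

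First, for $k \geq 1$ and $j \in [0, \lambda^k)$, I observe the disjoint decomposition
$$
\{\theta^k(a)_j : a \in \A\} = \bigsqcup_{i=0}^{h-1} \{\theta^k(a)_j : a \in C_i\},
$$
where the $i$-th piece lies entirely in $C_{(i\lambda^k + j) \bmod h}$ (so the pieces are disjoint, each in a distinct class): indeed, if $a = u[n]$ with $n \equiv i \pmod h$, then $\theta^k(a)_j = u[n\lambda^k + j]$ has class $(i\lambda^k + j)\bmod h$. Since $\gcd(h, \lambda) = 1$, the map $i \mapsto (i\lambda^k + j) \bmod h$ is a bijection of $\Z/h\Z$, and each class is hit exactly once. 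This alone already gives $h \mid c(\theta)$.

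Writing $i\lambda^k + j = p_0 + p_1 h$ with $p_0 \in [0, h)$, $p_1 \in [0, \lambda^k)$, I next use~\eqref{hei2} together with the identity $\tau^k(u_\tau(m))_{p_1} = u_\tau(m\lambda^k + p_1)$ for the $\tau$-fixed point $u_\tau$ (where $u_\tau(m)_\ell = u[mh + \ell]$), to match each restricted piece with a coordinate projection of a column of $\tau^k$:
$$
\{\theta^k(a)_j : a \in C_i\} = \mathrm{proj}_{p_0}\{\tau^k(w)_{p_1} : w \in \A^{(h)}\},
$$
where $\mathrm{proj}_{p_0}: \A^{(h)} \to C_{p_0}$ is the coordinate projection, using also that $\A^{(h)} \to C_{p_0}$ is surjective by primitivity. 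As $i$ varies, $(p_0(i), p_1(i))$ takes $h$ distinct values with $p_0(i)$ covering $\{0, \ldots, h-1\}$ bijectively.

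For the upper bound $c(\theta) \leq h\cdot c(\tau)$, I would choose $k$ large and $j$ such that every $p_1(i)$ hits a minimal column of $\tau^k$ (one of size $c(\tau)$); primitivity of $\tau$ combined with iteration guarantees enough minimal columns in arithmetic-progression pattern to arrange this. For the matching lower bound $c(\theta) \geq h\cdot c(\tau)$, the critical claim is that $|\mathrm{proj}_{p_0}\{\tau^k(w)_{p_1}\}| \geq c(\tau)$ for every valid pair. This is the main obstacle: the coordinate projection can \emph{a priori} collapse distinct $h$-tuples sharing the same coordinate, and one must show this never reduces a $\tau^k$-column below $c(\tau)$.

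To settle the key claim, I would argue that the projected sequence $(u[p_0 + nh])_{n \geq 0}$ is dynamically rigid: were some column of $\mathrm{proj}_{p_0}(u_\tau)$ of size strictly below $c(\tau)$, then by reversing the decomposition above and applying it to an iterate $\theta^K$ (trading the projection back for a restricted class in $\theta^K$, using the bijection between index pairs), one would exhibit a column of $\tau^K$ of size below $c(\tau)$, contradicting the minimality in the definition of $c(\tau)$. Summing the $h$ equal contributions then yields $c(\theta) = h\cdot c(\tau)$, and the divisibility $h(\theta) \mid c(\theta)$ follows immediately.
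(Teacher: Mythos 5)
Your reduction is genuinely different from the paper's proof (which passes to the renamed substitution $\widetilde\Theta$ over $\{0,\dots,c-1\}\times\mathcal X$, computes letter densities via Perron--Frobenius to get $h\mid c$ and $\#\{a\in M_0:f(a)=j\}=c/h$, and then uses the synchronizing position and relative bijectivity on the sets $M\in\mathcal X$), and its skeleton is sound: the identity $\{\theta^k(a)_j:a\in C_i\}=\mathrm{proj}_{p_0}\{\tau^k(w)_{p_1}:w\in\A^{(h)}\}$ for $i\la^k+j=p_0+p_1h$ is correct, and it does avoid the whole $\widetilde\Theta$ machinery. But three things need repair. First, a minor one: the disjoint decomposition into $h$ nonempty pieces gives only $c(\theta)\ge h$, not $h\mid c(\theta)$; divisibility only falls out of the final identity. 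Second, the upper bound as you justify it does not go through: you need one $j$ for which all $h$ positions $p_1(i)=\lfloor (i\la^k+j)/h\rfloor$ simultaneously carry minimal columns of $\tau^k$, and these positions are spaced by roughly $\la^k/h$, whereas the minimal columns that iteration hands you for free sit in a residue class mod $\la^{k^*}$ (by~\eqref{wzorkoc}, every position $\equiv j^*\bmod\la^{k^*}$ inherits minimality from a minimal column at $(k^*,j^*)$); since $(h,\la)=1$ there is no reason your $h$ positions land in such a class, so the ``arithmetic-progression pattern'' claim fails as stated. What does work is a density argument: the proportion of non-minimal positions of $\tau^k$ tends to $0$ (the recursion~\eqref{lk}--\eqref{lk2} applies verbatim to minimal columns, not just singleton ones, after replacing $\theta$ by an iterate so that $\tau$ already has a minimal column at the first generation), and since each constraint $p_1(i)\in B_k$ excludes at most $h|B_k|$ values of $j$, a union bound leaves a good $j$ once $|B_k|<\la^k/h^2$.

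Third, the lower bound hinges on the ``no collapse'' claim, which you correctly single out as the main obstacle but only sketch. It is true, and your reversal idea closes it, but the decisive computation must be made explicit: if $E:=\tau^k(\A^{(h)})_{p_1}$ satisfies $|\mathrm{proj}_{p_0}(E)|<c(\tau)$, choose $\ell$ with $\la^\ell\ge 2h$ and $j'$ with $p_0\la^\ell\le j'h\le (p_0+1)\la^\ell-h$; then the window $[j'h,(j'+1)h)$ of $\theta^\ell(e)=\theta^\ell(e_0)\cdots\theta^\ell(e_{h-1})$ lies entirely inside the block $\theta^\ell(e_{p_0})$, so the column of $\tau^{k+\ell}$ at position $p_1\la^\ell+j'$, which by~\eqref{wzorkoc} equals $\tau^\ell(E)_{j'}$, is a function of $e_{p_0}$ alone and hence has fewer than $c(\tau)$ elements --- contradicting minimality of $c(\tau)$. (This is the honest substitute for the paper's appeal to relative bijectivity via Lemma~\ref{l:relbij}.) With these two repairs your argument is complete and arguably more self-contained than the paper's; as written, though, the upper bound rests on a justification that is false and the lower bound's key step is not proved.
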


This result seems to be a folklore but we could not find a proof of it in the literature. We postpone the proof of Lemma~\ref{le:joining_height0} to Section~\ref{s:chpbrevisited}.

The following result is well known for pure substitutions but a use of Lemma~\ref{le:joining_height0} yields the following:

\begin{Prop}\label{p:cextension}
Assume that $\theta:\A\to\A^{\la}$ is primitive. Then, as the measure-theoretic
dynamical system, $(X_\theta,\mu_\theta,S)$ is isomorphic to
a $c(\theta)$-point extension (cf.\ Remark~\ref{r:efi}) of the $\la$-odometer
$(H_{\la},m_{H_{\la}},R)$.\end{Prop}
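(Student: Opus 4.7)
The plan is to reduce to the case of pure substitutions, for which the analogous statement is classical (cf.~\cite{De}, \cite{Qu}), and then propagate it through the $h(\theta)$-discrete suspension decomposition of $(X_\theta,\mu_\theta,S)$, combined with Lemma~\ref{le:joining_height0}.

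First, recall from the discussion earlier in this section that $\theta^{(h)}$ is pure, i.e.\ $h(\theta^{(h)})=1$, and that $(X_\theta,\mu_\theta,S)$ is measure-theoretically isomorphic to the $h(\theta)$-discrete suspension $\widetilde{S}$ of $(X_{\theta^{(h)}},\mu_{\theta^{(h)}},S)$. Invoking the classical pure case, there is a factor map $\pi:X_{\theta^{(h)}}\to H_\la$ realising $(X_{\theta^{(h)}},\mu_{\theta^{(h)}},S)$ as a $c(\theta^{(h)})$-point extension of $(H_\la,m_{H_\la},R)$.

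Next, I would lift $\pi$ to $\widetilde\pi:X_{\theta^{(h)}}\times\Z/h(\theta)\Z\to H_\la\times\Z/h(\theta)\Z$ by $\widetilde\pi(x,j):=(\pi(x),j)$. Comparing with~\eqref{susp11}--\eqref{susp12}, one checks that $\widetilde\pi$ intertwines $\widetilde{S}$ with the $h(\theta)$-discrete suspension $\widetilde{R}$ of $R$, and that each fibre of $\widetilde\pi$ still has exactly $c(\theta^{(h)})$ points. Since $(h(\theta),\la)=1$, formula~\eqref{suspodom} identifies $\widetilde{R}$ measure-theoretically with $R\times\tau_{h(\theta)}$, which is itself an $h(\theta)$-point extension of $(H_\la,m_{H_\la},R)$ via projection onto the first coordinate. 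Composing the two factor maps, $(X_\theta,\mu_\theta,S)$ becomes an $h(\theta)\cdot c(\theta^{(h)})$-point extension of the $\la$-odometer. Applying Lemma~\ref{le:joining_height0}, this number equals $c(\theta)$, and the result follows.

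The only substantive step is the appeal to Lemma~\ref{le:joining_height0} (whose proof is postponed to Section~\ref{s:chpbrevisited}); everything else is routine composition of the pure-substitution factor map with the suspension formalism set up in Section~1.2, in particular the identifications~\eqref{susp11}--\eqref{susp12} and~\eqref{suspodom}.
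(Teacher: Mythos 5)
Your proof is correct and follows essentially the same route as the paper's: both decompose $(X_\theta,\mu_\theta,S)$ as the $h(\theta)$-discrete suspension of the pure base, lift the classical $c(\theta^{(h)})$-to-one factor map via $\pi\times Id_{\Z/h\Z}$, use~\eqref{suspodom} to identify the suspended odometer with $H_\la\times\Z/h\Z$, and conclude with Lemma~\ref{le:joining_height0} that $h(\theta)\cdot c(\theta^{(h)})=c(\theta)$.
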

\begin{proof} We have an isomorphism of $X_\theta$ (we omit automorphisms and measures) with $\widetilde{X_{\theta^{(h)}}}$. If $\pi:X_{\theta^{(h)}}\to H_\lambda$ denotes the factor map which is $c(\theta^{(h)})$ to~1 (a.e.), then $\pi\times Id_{\Z/h\Z}$ yields a factor map between the $h$-discrete suspensions $\widetilde{X_{\theta^{(h)}}}$ and $\widetilde{H_\lambda}$. It is again $c(\theta^{(h)})$ to~1 (a.e.). However, in view of~\eqref{suspodom}, the suspension $\widetilde{H_\lambda}$ is isomorphic to the direct product $H_\lambda\times\Z/h\Z$.\footnote{The factor $H_\lambda$ is represented in $H_\lambda\times \Z/h\Z$ as the first coordinate $\sigma$-algebra and it is a factor of $\widetilde{H_\lambda}$ represented by an invariant $\sigma$-algebra. However, because of ergodicity and the fact that $H_\lambda$ has discrete spectrum, there is only one invariant $\sigma$-algebra in $\widetilde{H_\lambda}$ representing $H_\la$. The same argument shows that $H_\lambda$ is a factor of $X_\theta$ in a canonical way.} This makes $H_\lambda$ a factor of $X_\theta$ with (a.e.) fiber of cardinality $c(\theta^{(h)})\cdot h$. The result follows from Lemma~\ref{le:joining_height0}.
\end{proof}

As a matter of fact, whenever $\theta$ is primitive and $h(\theta)=1$, the system $(H_\lambda,R)$ represents the so called maximal equicontinuous factor of $(X_\theta,S)$. The
corresponding factor map is usually seen in the following way:  Each point $x\in X_{\theta}$ has a unique $\la^t$-skeleton structure. By that, one means a sequence $(j_t)_{t\geq1}$ with $0\leq j_t<\la^t-1$ for which, for each $t\geq1$, we have
$$
x[-j_t+s\la^t,-j_t+(s+1)\la^t-1]=\theta^t(c_s)$$
for each $s\in\Z$ and some $c_s\in\A$. Now, the map $x\mapsto(j_t)$ yields the factor map which we seek.

Notice also that if $\mathcal{R}\subset \A\times\A$ is an equivalence relation which is $\theta$-{\em consistent}, that is:
\beq\label{su5}
(a,b)\in\mathcal{R}\;\Rightarrow (\theta(a)_j,\theta(b)_j)\in \mathcal{R}\text{ for each }j=0,\ldots,\la-1,\eeq
then we have the quotient substitution $\theta_{\mathcal{R}}:\A/\mathcal{R}\to\left(
\A/\mathcal{R}\right)^{\la}$ given by
$$
\theta_{\mathcal{R}}([a]):=[\theta(a)_0]\ldots[\theta(a)_{\la-1}]$$
is correctly defined and the dynamical system $(X_{\theta_{\mathcal{R}}},S)$ is a (topological) factor of $(X_{\theta},S)$. Let alone $(X_{\theta_{\mathcal{R}}}, \mu_{\theta_{\mathcal{R}}},S)$ is a measure-theoretic factor of $(X_\theta,\mu_\theta,S)$   (clearly, $\theta_{\mathcal{R}}$ is primitive). A particular instance of a $\theta$-consistent equivalence relation $\mathcal{R}$ given by
$$
(a,b)\in \mathcal{R}\text{ if and only if } \theta(a)=\theta(b).$$
In this situation, the quotient dynamical system
$(X_{\theta_{\mathcal{R}}},S)$ is in fact (topologically)
isomorphic to $(X_\theta,S)$. Therefore, no harm arises if we assume
that
\beq\label{su6}
\mbox{$\theta$ is 1-1 on letters, i.e., $\theta(a)\neq\theta(b)$
whenever $a\neq b$.}\eeq

Finally, we assume that
\beq\label{su7}
\theta\text{ is {\em aperiodic}},
\eeq
that is, there is a non-periodic element $x\in X_\theta$.
In fact, since we assume that $\theta$ is primitive,
$\theta$ is aperiodic if and only if $X_\theta$ is infinite.

From now on, we consider only  substitutions $\theta$ satisfying \eqref{su3}, \eqref{su4},~\eqref{su6} and~\eqref{su7}. Often, we will additionally assume that $\theta$ is pure.

\section{Proof of Theorem~\ref{t:main}~(i)}
This and Section~\ref{s:bij} will be devoted to prove
Theorem~\ref{t:main} in some particular cases.
We assume that $c(\theta)=1$, so according to
Proposition~\ref{p:cextension} we deal (from ergodic theory point of
view) with discrete spectrum case.\footnote{A prominent example in
this class is the Baum-Sweet sequence given by the substitution
$a\mapsto ab$, $b\mapsto cb$, $c\mapsto bd$ and $d\mapsto dd$. It
is not hard to see $\{\theta^3(y)_5:\;y\in\{a,b,c,d\}\}=\{d\}$.}
In fact, we will prove even a stronger property.

So we have $\theta:\A\to\A^{\la}$, and we assume that for some $a\in\A$, $\theta(a)_0=a$. Moreover, by replacing $\theta$ by its iterate if necessary, we can assume that
$$\left|\{0\leq j<\la:\:|\{\theta(b)_j:\:b\in\A\}|=1\}\right|:=\ell_1\geq1.$$
Hence, since $u=\theta(u)$ is a concatenation of words $\theta(b)$, $b\in\A$, in the interval $[0,\la-1]$ the number of positions $j$, for which $u[j+s\la]=u[j]$ for each $s\geq1$ is $\ell_1$. Let us pass to $\theta^2$. We are interested in
$$
\ell_2:=\left|\{0\leq j<\la^2-1:|\{\theta^2(b)_j:\:b\in\A\}|=1\}\right|.$$
We have $\ell_2\geq(\la-\ell_1)\ell_1+\ell_1\la$. Inductively, if
$$
\ell_{k-1}:=\left|\{0\leq j<\la^{k-1}-1:|\{\theta^{k-1}(b)_j:\:b\in\A\}|=1\}\right|$$
then
\beq\label{lk}
\ell_k\geq(\la-\ell_1)\ell_{k-1}+\ell_1\la^{k-1}.\eeq
Moreover, by definition,
\beq\label{lk1}
\mbox{the sequence $\ell_k/\la^k$, $k\geq1$, is increasing.}\eeq
In view of~\eqref{lk}, we have
$$
\frac{\ell_k}{\la^k}\geq\frac{\la-\ell_1}{\la}\frac{\ell_{k-1}}
{\la^{k-1}}+\frac{\ell_1}{\la},$$
and since, by~\eqref{lk1}, the sequence $\ell_k/\la^k$ is convergent, the above recurrence formula implies
\beq\label{lk2}
\lim_{k\to\infty} \frac{\ell_k}{\la^k}=1.
\eeq
Note that an interpretation of $\ell_k$ is that it is the number of coordinates $j$ for $j=0,1,\ldots, \la^k-1$ such that
$u[j+s\la^k]=u[j]$ for each $s\geq1$.

Recall now the notion of Weyl rationally almost periodic sequences (WRAP). A sequence $(x[n])\in \A^{\N}$ is WRAP if it is the limit of periodic sequences in the Weyl pseudo-metric $d_W$:
$$
d_W(z,z')=\limsup_{N\to\infty}\,\sup_{m\geq0}
\frac1N\left|\{m\leq n\leq m+N-1:z[n]\neq z'[n]\}\right|.$$
Now, \eqref{lk2} implies immediately that:
\begin{Prop}\label{p:cn=1}
If $c(\theta)=1$ then the fixed point $u=\theta(u)$ is WRAP, where the periodic sequences can be chosen with period $\la^k$.\end{Prop}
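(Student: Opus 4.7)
The plan is to exhibit, for each $k\geq 1$, an explicit $\la^k$-periodic approximation $u^{(k)}$ of $u$ and show that $d_W(u,u^{(k)})\to 0$ as $k\to\infty$, where the convergence rate will be controlled by the already-established limit~\eqref{lk2}. First, I would introduce the set of ``frozen'' positions within one period of length $\la^k$, namely
$$J_k:=\{0\leq j<\la^k:\:|\{\theta^k(b)_j:\:b\in\A\}|=1\},$$
whose cardinality is $\ell_k$ by definition. The key observation, which justifies the interpretation of $\ell_k$ stated (but not proved) just before the proposition, is that for every $j\in J_k$ and every $s\geq 0$ one has $u[j+s\la^k]=u[j]$. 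Indeed, since $u=\theta^k(u)$ (as $u=\theta(u)$ implies $u=\theta^k(u)$), one has $u[j+s\la^k]=\theta^k(u[s])_j$, and the singleton condition forces this value to be independent of $u[s]$, hence equal to $\theta^k(u[0])_j=u[j]$.

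Next, I would define the $\la^k$-periodic sequence $u^{(k)}\in\A^{\N}$ by the rule $u^{(k)}[n]:=u[n\bmod \la^k]$. By the previous step, $u$ and $u^{(k)}$ agree at every position whose residue modulo $\la^k$ lies in $J_k$, so the disagreement set is a $\la^k$-periodic subset of $\N$ containing exactly $\la^k-\ell_k$ elements per period. In any window $[m,m+N-1]$, the number of disagreements is therefore bounded by $(\lceil N/\la^k\rceil+1)(\la^k-\ell_k)$, so dividing by $N$, passing to the supremum over $m\geq 0$, and taking limsup in $N$ yields
$$d_W(u,u^{(k)})\leq 1-\frac{\ell_k}{\la^k}.$$

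Finally, invoking~\eqref{lk2}, the right-hand side tends to $0$ as $k\to\infty$, so $u$ is WRAP with approximating periodic sequences of period $\la^k$, as claimed. There is no serious analytic obstacle here: the substantive content is entirely in~\eqref{lk2}, which was already proved in the excerpt via the recursion~\eqref{lk} together with the monotonicity~\eqref{lk1}. The step that demands even a moment of thought is the translation between the combinatorial ``a proportion $\ell_k/\la^k$ of positions in each period is frozen'' and the defining pseudo-metric of WRAP, but once one sees that the disagreement pattern is itself periodic the estimate is immediate and uniform in the starting position $m$.
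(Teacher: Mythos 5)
Your proof is correct and follows exactly the route the paper intends: the paper simply asserts that Proposition~\ref{p:cn=1} follows ``immediately'' from~\eqref{lk2} together with the stated interpretation of $\ell_k$, and your argument supplies precisely the omitted details (the verification that columns in $J_k$ are frozen via $u[j+s\la^k]=\theta^k(u[s])_j$, the periodic approximant $u^{(k)}$, and the uniform-in-$m$ window count giving $d_W(u,u^{(k)})\leq 1-\ell_k/\la^k$). No gaps.
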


From the point of view of M\"obius disjointness, dynamical systems $(X_x,S)$ given by WRAP sequences $x$ have already been
studied in \cite{Be-Ku-Le-Ri} and
it is proved there that all continuous observables
$(f(S^ny))$ (for $f\in C(X_x)$) are orthogonal to the M\"obius function $\mob$.
But a rapid look at the proof in \cite{Be-Ku-Le-Ri}  shows that the only property of
$\mob$ used in it was the aperiodicity of $\mob$
(what is essential in the proof is that all points $y\in X_x$
are also WRAP).

\begin{Cor}[\cite{Be-Ku-Le-Ri}]\label{c:weyl}
If $x\in\A^{\N}$ is WRAP then for each bounded $\bfu:\N\to\C$ which is aperiodic, we have
$$
\lim_{N\to\infty}\frac1N\sum_{n\leq N}f(S^ny)\bfu(n)=0$$
for all $f\in C(X_x)$ and $y\in X_x$.
In particular, the above assertion holds for substitutional dynamical systems with $c(\theta)=1$.~\footnote{The automatic sequences given by observables in case $c(\theta)=1$ represent so called {\em synchronized} case in \cite{Mu}.}\end{Cor}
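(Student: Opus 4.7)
The plan is to reduce orthogonality of $(f(S^n y))_{n\geq 0}$ to $\bfu$ to the case of a periodic sequence, where vanishing is immediate from aperiodicity. Since $\A$ is finite and $X_x\subset\A^{\Z}$ is a compact subshift, functions depending on finitely many coordinates are uniformly dense in $C(X_x)$ by Stone--Weierstrass, and $\bfu$ is bounded, so it suffices to treat a cylinder function of the form $f(y)=\varphi(y[-M,M])$ with $\varphi:\A^{2M+1}\to\C$. For such an $f$ the sequence $(f(S^n y))_{n\geq 0}$ takes values in the finite set $\varphi(\A^{2M+1})$, so the WRAP notion from the footnote applies to it directly.

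The key preliminary step, and the only place the hypothesis on $x$ enters, is to verify that every $y\in X_x$ is itself WRAP. Fix $\eps>0$ and a periodic sequence $p$ of some period $T$ with $d_W(x,p)<\eps$. Pick $n_j\to\infty$ so that $S^{n_j}\ov x\to y$, with $\ov x$ as in Remark~\ref{r:podszifty}. By compactness of the finite set of period-$T$ sequences over $\A$, a subsequence of $(S^{n_j}p)$ converges coordinatewise to a periodic sequence $\tilde p$ of period dividing $T$. The pseudo-metric $d_W$ is shift-invariant, and since disagreement of $y$ and $\tilde p$ on any fixed window $[0,N-1]$ eventually equals disagreement of $S^{n_j}\ov x$ and $S^{n_j}p$ on that same window, passage to the limit yields $d_W(y,\tilde p)\leq d_W(x,p)<\eps$. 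Composing with $\varphi$ on sliding windows transfers this approximation to $(f(S^n y))_{n}$, which is thereby WRAP, approximated in $d_W$ by sequences of period $T$.

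Given $\eps>0$, choose a periodic sequence $q:\N\to\varphi(\A^{2M+1})$ of period $T$ with $d_W\bigl((f(S^n y))_n,q\bigr)<\eps$. Then
\[
\limsup_{N\to\infty}\Bigl|\frac{1}{N}\sum_{n\leq N}\bigl(f(S^n y)-q(n)\bigr)\bfu(n)\Bigr|\leq 2\|f\|_\infty\|\bfu\|_\infty\,\eps,
\]
while
\[
\frac{1}{N}\sum_{n\leq N}q(n)\bfu(n)=\sum_{r=0}^{T-1}q(r)\cdot\frac{1}{N}\sum_{\substack{n\leq N\\ n\equiv r\,(\mathrm{mod}\,T)}}\bfu(n),
\]
and each inner average tends to zero by aperiodicity of $\bfu$ (applied with $a=T$, $b=r$). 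The triangle inequality then gives $\limsup_N|N^{-1}\sum_{n\leq N}f(S^n y)\bfu(n)|\leq 2\|f\|_\infty\|\bfu\|_\infty\,\eps$, and letting $\eps\to 0$ settles the cylinder case. Density of cylinder functions in $C(X_x)$, together with the uniform bound on $\bfu$, extends the conclusion to all $f\in C(X_x)$. The substitutional case $c(\theta)=1$ is obtained by applying this to the fixed point $u$, which is WRAP by Proposition~\ref{p:cn=1}. The one slightly delicate point in the argument is the transfer of the WRAP property from $x$ to arbitrary $y\in X_x$; the rest is a routine triangle inequality combined with the definition of aperiodicity.
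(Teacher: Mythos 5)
Your argument is correct and is essentially the proof the paper attributes to \cite{Be-Ku-Le-Ri}: reduce to cylinder functions, transfer the WRAP property from $x$ to every $y\in X_x$ (which the paper explicitly flags as the essential point), and then kill the periodic approximant using aperiodicity of $\bfu$ on each residue class mod $T$. One small caveat: for a general WRAP $x\in\A^{\N}$ a two-sided point $\ov{x}$ with dense orbit in $X_x$ need not exist (cf.\ Remark~\ref{r:podszifty}), so for arbitrary $y\in X_x$ you should locate the comparison windows by using that every block of $y$ occurs in $x$, rather than via $S^{n_j}\ov{x}\to y$; in the substitutional application $(X_\theta,S)$ is minimal and your version suffices as written.
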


\begin{Remark} Theorem~\ref{t:main}~(i) also follows
from~\cite{De-Dr-Mu} but because the relations between
synchronized automata and substitutions with $c(\theta)=1$
do not seem to be explained explicitly in literature,
we gave a more general and direct argument.\end{Remark}

\section{More ergodic theory prerequisites}
\subsection{Essential centralizer}\label{s:essencent} Assume that $T$ is an ergodic\footnote{We assume also that $T$
is {\em aperiodic}: for $\nu$-a.e.\ $y\in Y$,
the map $m\mapsto T^my$ is one-to-one on $\Z$.} automorphism of $\ycn$. By the {\em centralizer}  $C(T)$ of $T$ we mean the group of all invertible
automorphisms $V\in{\rm Aut}\,\ycn$ commuting with $T$.
Clearly, $\{T^n:\:n\in\Z\}$ is a normal subgroup of $C(T)$ and the group $EC(T) := C(T) / \{T^n: \:n\in \Z\}$ is called the {\em essential centralizer} of $T$.

\begin{Lemma}\label{l:essc} Assume that $EC(T)$ is finite and $C(T)=C(T^p)$ for all sufficiently large $p\in\mathscr{P}$. Then, for all sufficiently large $p,q\in\mathscr{P}$, $p\neq q$, the automorphisms  $T^p$ and $T^q$ are not isomorphic.
\end{Lemma}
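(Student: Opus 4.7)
The plan is to argue by contradiction: assume that for two distinct primes $p,q$, as large as we wish, there exists an isomorphism $W\in{\rm Aut}\,\ycn$ with $WT^pW^{-1}=T^q$. The key auxiliary object will be the conjugate $R:=WTW^{-1}\in{\rm Aut}\,\ycn$. It is automatically a $p$-th root of $T^q$, since $R^p=WT^pW^{-1}=T^q$. The strategy is to use the two assumptions in turn to force $R$ to be an integer power of $T$, after which aperiodicity of $T$ will rule out the resulting relation $T^{kp}=T^q$.

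For the first step I would observe that $R^p=T^q$ already implies $R\in C(T^q)$, because $R$ automatically commutes with any of its own powers. Choosing the prime $q$ large enough so that the second hypothesis yields $C(T^q)=C(T)$, I conclude $R\in C(T)$.

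For the second step I would pass to the essential centralizer and study the class $[R]\in EC(T)$. The relation $R^p=T^q$ projects to $[R]^p=[\mathrm{Id}]$, so the order of $[R]$ divides the prime $p$. Since $EC(T)$ is finite by hypothesis, set $N:=|EC(T)|$; by Lagrange's theorem the order of $[R]$ also divides $N$. Requiring $p$ to be prime with $p>N$ makes $\gcd(p,N)=1$, so the order of $[R]$ must be $1$. Therefore $R=T^k$ for some integer $k$, and then $R^p=T^q$ becomes $T^{kp-q}=\mathrm{Id}$; the aperiodicity of $T$ then gives $kp=q$, which is impossible once $p\neq q$ are primes (it would force $p\mid q$ and hence $p=q$).

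The main conceptual step---and really the only non-routine point---is recognizing that $R=WTW^{-1}$ is the right auxiliary automorphism to introduce: once one sees that it is a $p$-th root of $T^q$ sitting inside $C(T)$, the finiteness of $EC(T)$ combined with $p$ being prime and larger than $|EC(T)|$ does the rest. No harder obstacle seems to be hidden; the hypothesis $C(T)=C(T^p)$ (for large primes $p$) is used precisely to relocate $R$ from $C(T^q)$ back into $C(T)$ so that the $EC(T)$-argument can be applied.
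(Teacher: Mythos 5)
Your proof is correct and follows essentially the same route as the paper: both arguments extract from a putative isomorphism a prime-power root of a power of $T$ (you take $R=WTW^{-1}$ with $R^p=T^q$, the paper takes a $q$-th root of $T^p$), relocate it into $C(T)$ via the hypothesis $C(T^r)=C(T)$, and then use finiteness of $EC(T)$ together with the primes being large to force the root to be an integer power of $T$, whence aperiodicity gives the contradiction. Your use of Lagrange's theorem in the quotient group $EC(T)$ is just a slightly cleaner packaging of the paper's explicit computation with coset representatives $V_i$ and their orders $m_i$.
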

\begin{proof} Since $EC(T)$ is finite, we have
$$
C(T)=\{T^nV_i:\: n\in\Z, i=0,1,\ldots,K\},$$
where $V_0=Id$ and $V_i\in C(T)\setminus\{T^j:\:j\in\Z\}$ for $i=1,\ldots, K$. For $i=1,\ldots,K$, let $m_i\geq1$ be the smallest natural number such that $V_i^{m_i}\in\{T^j:\:j\in\Z\}$ (of course $m_i$ is precisely the order of the coset given by $V_i$ in $C(T)/\{T^j:\:j\in\Z\}$). In what follows, we consider only prime numbers $p,q$ which are dividing no $m_i$ for $i=1,\ldots,K$. Suppose that for $p\neq q$ (sufficiently large) we have an isomorphism of $T^p$ and $T^q$. As obviously $T^q$ has a $q$-root, it follows that there is a root of degree $q$ of $T^p$, i.e.\ there exists $W\in{\rm Aut}\,\ycn$ such that $W^q=T^p$.\footnote{Note that if $\tau$ is an automorphism and $\tau=\sigma^q$ for another automorphism $\sigma$, then $\sigma\in C(\tau)$ as
$\sigma\circ \tau=\sigma\circ\sigma^q=\sigma^q\circ \sigma=\tau\circ\sigma$.} Now, $W\in C(T^p)$, hence (by assumption) $W\in C(T)$. It follows that, for some $n\in\Z$ and $0\leq i\leq K$, $W=T^n\circ V_i$, whence
$W^q=(T^n\circ V_i)^q$ and we obtain
$$V_i^q=T^{p-nq}$$
which, if $i\neq0$, is impossible as $q$ is coprime to  $m_i$ and if $i=0$, $T^p=T^{nq}$ which yields $q|p$ (or $p=0$ if $n=0$), a contradiction.
\end{proof}
\begin{Remark}\label{r:essc} Notice that in the above proof,
in fact we proved that $T^p$ cannot be isomorphic to $U^q$
with $U\in{\rm Aut}\,\ycn$, in other words, we proved
that $T^p$ cannot have a $q$-root.\end{Remark}

\subsubsection{Centralizer of $h$-discrete suspensions}
We assume that $T\in{\rm Aut}\ycn$ and let $\widetilde{T}$ denote its $h$-discrete suspension, see~\eqref{susp11} and~\eqref{susp12}. Note that whenever $V\in C(T)$, the formula $\ov{V}(y,j):=(Vy,j)$ for $(y,j)\in\widetilde{Y}$ defines an element of the centralizer of $\widetilde{T}$. In fact, we have the following:

\begin{Prop}[Cor. 1.4 in \cite{Da-Le}] \label{p:centsusp}
If $T\in{\rm Aut}\ycn$ is ergodic then $C(\widetilde{T})=\{\ov{V}\circ \widetilde{T}^m:\:V\in C(T),m\in\Z\}$.\end{Prop}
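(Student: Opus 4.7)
The plan is to prove the two inclusions separately. The inclusion $C(\widetilde T)\supseteq\{\ov V\circ\widetilde T^m:\:V\in C(T),m\in\Z\}$ is immediate: a direct computation using $V\circ T=T\circ V$ and the defining formulas~\eqref{susp11}--\eqref{susp12} shows $\ov V\circ\widetilde T=\widetilde T\circ\ov V$, and of course $\widetilde T^m$ commutes with $\widetilde T$. So the real content lies in the reverse inclusion.

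For the nontrivial direction, I would fix $W\in C(\widetilde T)$, write $W(y,j)=(W_1(y,j),W_2(y,j))$, and first control only the second coordinate. Unwinding $W\circ\widetilde T=\widetilde T\circ W$ at the interior points $j=0,1,\ldots,h-2$ (where $\widetilde T$ merely shifts the fibre) gives $W_2(y,j+1)\equiv W_2(y,j)+1\pmod h$, hence $W_2(y,j)\equiv W_2(y,0)+j\pmod h$ for every $j$. Applying the commutation at $j=h-1$ (where $\widetilde T$ invokes $T$) then forces $W_2(Ty,0)=W_2(y,0)$; since $T$ is ergodic, the function $y\mapsto W_2(y,0)$ is $\nu$-a.e.\ constant, equal to some $m_0\in\{0,1,\ldots,h-1\}$.

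Next I would set $W':=W\circ\widetilde T^{-m_0}\in C(\widetilde T)$. By the previous step $W'$ fixes the second coordinate, so $W'(y,j)=(V_j(y),j)$ for measurable maps $V_j:Y\to Y$. Exploiting $W'\circ\widetilde T=\widetilde T\circ W'$ once more: the interior commutations ($j=0,\ldots,h-2$) collapse to $V_{j+1}=V_j$, so that all $V_j$ coincide with a single map $V:Y\to Y$, and the boundary commutation at $j=h-1$ reduces precisely to $V\circ T=T\circ V$, that is $V\in C(T)$. Invertibility of $V$ is automatic since $\ov V=W'$ is invertible on $\widetilde Y$ and acts as $V$ on each fibre $Y\times\{j\}$. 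Thus $W=\ov V\circ\widetilde T^{m_0}$, as desired.

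The main obstacle I foresee is the careful modular bookkeeping around the wrap-around $j=h-1\mapsto0$, where the computation of $W\circ\widetilde T$ versus $\widetilde T\circ W$ depends on whether $W_2(y,j)$ happens to equal $h-1$ or not, and the outcome involves $T$ in one case and not the other. These details have to be threaded consistently through both the extraction of $m_0$ and the descent from $W'$ to $V$, but no additional analytic input beyond the ergodicity of $T$ (used exactly once) is required.
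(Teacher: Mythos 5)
Your argument is correct. Note, however, that the paper does not prove this proposition at all: it is quoted as Corollary~1.4 of the cited work of Danilenko and Lema\'nczyk, where it is obtained within a more general study of (discrete) suspensions and their centralizers. What you supply is a self-contained, elementary derivation, and all the steps check out: the interior commutations force $W_2(y,j)\equiv W_2(y,0)+j\pmod h$ regardless of whether the fibre coordinate of $W(y,j)$ wraps around (the second coordinate of $\widetilde T(W(y,j))$ is always $W_2(y,j)+1\bmod h$, so the case distinction you worry about only affects the first coordinate, which you do not need at that stage); the boundary commutation gives $T$-invariance of $W_2(\cdot,0)$ and ergodicity yields the constant $m_0$; after composing with $\widetilde T^{-m_0}$ the map fixes fibres, the interior relations identify all the $V_j$, and the boundary relation is exactly $V\circ T=T\circ V$. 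Invertibility and measure preservation of $V$ follow from those of $\ov V=W'$ restricted to fibres, and since $\widetilde T^{h}=\ov T$, producing $m_0\in\{0,\ldots,h-1\}$ indeed lands you in the set $\{\ov V\circ\widetilde T^{m}:V\in C(T),\,m\in\Z\}$ as stated. The only caveat worth recording is that all identities hold $\widetilde\nu$-a.e., which is all that is claimed.
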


It follows immediately from Proposition~\ref{p:centsusp} that:

\begin{Cor}\label{c:centsusp}
If $T\in{\rm Aut}\ycn$ is ergodic and $EC(T)$ is finite, also
$EC(\widetilde{T})$ is finite.\end{Cor}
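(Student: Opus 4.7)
The plan is to deduce Corollary~\ref{c:centsusp} directly from Proposition~\ref{p:centsusp} by constructing a surjection $EC(T)\twoheadrightarrow EC(\widetilde{T})$. First I would observe that the map $V\mapsto \ov{V}$, which sends $C(T)$ into $C(\widetilde{T})$, pushes down to a map at the level of essential centralizers. Composed with the quotient $C(\widetilde{T})\to EC(\widetilde{T})$, it needs to kill the powers of $T$, and indeed the powers of $T$ in $C(T)$ are sent into powers of $\widetilde{T}$: the key identity is
\[
\ov{T^n}=\widetilde{T}^{\,nh}\quad\text{for all }n\in\Z,
\]
which is immediate from~\eqref{susp11}--\eqref{susp12}, since $\widetilde{T}^h(y,j)=(Ty,j)$ for every $j$. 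Hence if $V_1=T^n V_2$ in $C(T)$, then $\ov{V_1}$ and $\ov{V_2}$ differ by $\widetilde{T}^{nh}\in\langle\widetilde{T}\rangle$, so the induced map $\Phi\colon EC(T)\to EC(\widetilde{T})$, $[V]\mapsto[\ov{V}]$, is well-defined (it is clearly a homomorphism as $\ov{V_1V_2}=\ov{V_1}\ov{V_2}$).

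Next I would verify surjectivity of $\Phi$ using Proposition~\ref{p:centsusp}. By that proposition, every $W\in C(\widetilde{T})$ can be written in the form $W=\ov{V}\circ\widetilde{T}^m$ for some $V\in C(T)$ and $m\in\Z$. Passing to the quotient by $\langle\widetilde{T}\rangle$, the class $[W]\in EC(\widetilde{T})$ equals $[\ov{V}]=\Phi([V])$, so $\Phi$ is onto.

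Combining the two steps gives $|EC(\widetilde{T})|\leq|EC(T)|$, and since the right-hand side is finite by hypothesis, so is the left. This completes the proof. No obstacle is really expected here: once Proposition~\ref{p:centsusp} is granted, the only point that needs a small verification is the identity $\ov{T^n}=\widetilde{T}^{nh}$, and this is a direct unwinding of the definition of the $h$-discrete suspension.
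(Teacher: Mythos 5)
Your proof is correct and is exactly the argument the paper has in mind: the paper derives the corollary ``immediately'' from Proposition~\ref{p:centsusp}, and your construction of the induced surjection $EC(T)\twoheadrightarrow EC(\widetilde{T})$ (via the verification $\ov{T^n}=\widetilde{T}^{nh}$) is the natural fleshing-out of that one-line deduction. No issues.
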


\subsubsection{Essential centralizer of substitutions of constant length}
The result below has been proved by Host and Parreau in \cite{Ho-Pa} for pure substitutions of constant length. However, taking into account Corollary~\ref{c:centsusp} and  the fact that each substitutional system is an $h$-discrete suspension of its pure basis (which is also given by a substitution of constant length), see Section~\ref{s:subst}, we obtain the
following\footnote{For the particular instance of bijective
substitutions, the result was proved to hold slightly
earlier in \cite{Le-Me}.} result.

\begin{Th}[\cite{Ho-Pa}]\label{t:essc1}
Let $\theta$ be a primitive substitution of constant length. Then (remembering that $(X_\theta,S)$ is uniquely ergodic),  $EC(S)$ is finite if and only if $c(\theta)>h(\theta)$.
\end{Th}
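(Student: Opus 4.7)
The plan is to reduce the general case to the pure case $h(\theta)=1$, where the statement is precisely what Host--Parreau established in \cite{Ho-Pa}. The reduction rests on two earlier results: the identification of $(X_\theta,\mu_\theta,S)$ with the $h(\theta)$-discrete suspension $\widetilde T$ of the pure system $(X_{\theta^{(h)}},\mu_{\theta^{(h)}},T)$ (where $T=S|_{X_{\theta^{(h)}}}$), and Lemma~\ref{le:joining_height0}, which gives $c(\theta)=h(\theta)\cdot c(\theta^{(h)})$, so $c(\theta)>h(\theta)$ if and only if $c(\theta^{(h)})>1$.

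For the reduction itself, I would combine Proposition~\ref{p:centsusp} with Corollary~\ref{c:centsusp}. Proposition~\ref{p:centsusp} yields $C(\widetilde T)=\{\overline{V}\circ\widetilde T^m:\:V\in C(T),\,m\in\Z\}$. Since $\widetilde T^h=\overline T$, the homomorphism $V\mapsto[\overline V]\in EC(\widetilde T)$ has kernel exactly $\{T^k:\:k\in\Z\}$ (an element $\overline V$ equals some $\widetilde T^n$ only when $n\equiv 0\pmod h$, in which case $\overline V=\overline{T^{n/h}}$, forcing $V=T^{n/h}$); this descends to an injection $EC(T)\hookrightarrow EC(\widetilde T)$. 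Corollary~\ref{c:centsusp} supplies the reverse implication that $EC(T)$ finite implies $EC(\widetilde T)$ finite. Together these show $EC(S|_{X_\theta})=EC(\widetilde T)$ is finite if and only if $EC(T)=EC(S|_{X_{\theta^{(h)}}})$ is finite. The problem therefore reduces to: for a pure primitive $\theta$ (i.e.\ $h(\theta)=1$), $EC(S)$ is finite if and only if $c(\theta)>1$.

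In the pure case, the ``only if'' direction is straightforward. If $c(\theta)=1$ then Proposition~\ref{p:cextension} identifies $(X_\theta,\mu_\theta,S)$ with the $\la$-odometer $(H_\la,m_{H_\la},R)$. The centralizer of an ergodic compact group rotation contains the full translation group $H_\la$, so $EC(R)=H_\la/\Z$ is uncountable, hence $EC(S)$ cannot be finite.

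The genuinely hard direction, $c(\theta)>1\Rightarrow EC(S)$ finite, I would invoke directly from \cite{Ho-Pa}. The idea there is that every $V\in C(S)$ must preserve the $\la^t$-skeleton structure of points of $X_\theta$ and hence projects to a rotation of the maximal equicontinuous factor $(H_\la,R)$; the hypothesis $c(\theta)>1$ translates combinatorial rigidity of the columns $\{\theta^k(a)_j:\:a\in\A\}$ into finite-index and finite-fiber constraints on both the liftable rotations and the lifts themselves. Reproving this combinatorial rigidity analysis from scratch is the genuine obstacle of the theorem, and the reason we invoke Host--Parreau as a black box rather than reproducing their proof.
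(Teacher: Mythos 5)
Your proposal is correct and follows essentially the same route as the paper: the paper likewise obtains the theorem by combining the Host--Parreau result for pure substitutions with the fact that $(X_\theta,\mu_\theta,S)$ is the $h(\theta)$-discrete suspension of its pure base, Corollary~\ref{c:centsusp} for the centralizer of suspensions, Lemma~\ref{le:joining_height0} to translate $c(\theta)>h(\theta)$ into $c(\theta^{(h)})>1$, and the discrete-spectrum/odometer observation (Remark~\ref{r:unc}) for the case $c(\theta)=h(\theta)$. Your explicit verification that $EC(T)$ injects into $EC(\widetilde T)$ is a detail the paper leaves implicit, but it is consistent with and completes the paper's argument.
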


\begin{Remark}\label{r:unc}
If $c(\theta)=h(\theta)$ (in particular, if $c(\theta)=1$) then we are in  the synchronizing case.
Thus, the spectrum of the corresponding dynamical system is discrete.
Therefore, the essential centralizer is uncountable.
In fact, $S^p$ is isomorphic to $S^q$ for all sufficiently large $p,q\in\mathscr{P}$.
\end{Remark}

\begin{Remark}\label{r:bijcent} Assume that $V:\A\to\A$ is a
bijection ``commuting'' with $\theta$:
\beq\label{comm}
\theta(V(a))_j=V(\theta(a)_j)\eeq
for each $j=0,1,\ldots, \la-1$. Then, we claim that
$V$ has a natural extension to
a homeomorphism $V:X_\theta\to X_\theta$ so that $S\circ V=V\circ S$.
Indeed, treat $V$ as a 1-code map on $\A^\Z$ (so obviously, it commutes
with the shift). Clearly, we only need to show that
$$
y\in X_\theta\;\Rightarrow\; Vy\in X_\theta.$$
Now, for each $i\leq j$, $y[i,j]=\theta^k(a)[m,m+(j-i)]$
for some $a\in\A$, $k\geq1$ and $m\geq0$. It follows
from~\eqref{comm} that
$$(Vy)[i,j]=V\theta^k(a)[m,m+(j-i)]=\theta^k(Va)[m,m+(j-i)]
$$
and the claim follows. Note that if $\theta$ is primitive, then
$V:X_\theta\to X_\theta$ preserves $\mu_\theta$, so $V\in C(S)$.
\end{Remark}

\subsection{Joinings of powers of finite extensions of odometers}
\begin{Lemma}[\cite{Fe-Ku-Le-Ma}]\label{l:essc2}
Assume that $T$ acting on $\ycn$ is ergodic (aperiodic) and has discrete spectrum.
Let $\va:Y\to G$ be a cocycle with $G$ finite. Assume that $T_{\va}$ is ergodic.
Moreover, assume that for $p\in\mathscr{P}$ large enough the corresponding group extension $\left(T_{\va}\right)^p$ is also ergodic.\footnote{This assumption requires small spectrum of $U_T$.}  Then for $p\in\mathscr{P}$ large enough, we have
$C(T_{\va})=C\left(\left(T_{\va}\right)^p\right)$.
\end{Lemma}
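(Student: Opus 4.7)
The inclusion $C(T_{\va})\subseteq C((T_{\va})^p)$ is immediate, so the plan is to prove the reverse. The strategy has three ingredients: a base-level centralizer identification $C(T^p)=C(T)$, a descent of a given $W\in C((T_{\va})^p)$ to an element $V\in C(T)$ together with a fibre form for $W$, and finally a coprimality argument at the fibre level to promote commutation with $(T_{\va})^p$ to commutation with $T_{\va}$.

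First I would observe that the small-spectrum assumption delivers $C(T^p)=C(T)$ for large $p\in\mathscr{P}$. Indeed, if $p$ is coprime to the orders appearing in $\mathrm{Spec}(T)$ (possible since $\mathrm{Spec}(T)$ has only finitely many prime divisors in its ``order set''), raising to the $p$-th power is a bijection of the eigenvalue group of $T$; by Halmos--von Neumann, $T$ and $T^p$ have identical eigenfunctions and any measurable transformation commuting with $T^p$ acts diagonally on eigenfunctions in a way that also commutes with $T$.

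Second, I would take $W\in C((T_{\va})^p)$ and analyse its graph joining $(\nu\ot m_G)_W\in J^e((T_{\va})^p,(T_{\va})^p)$. Its projection along $Y\times G\to Y$ is an ergodic self-joining of $T^p$, hence graphic (by \eqref{jdis}), coming from some $V\in C(T^p)=C(T)$. Combining this with Lemma~\ref{l3} (relativising the joining over its base) and using that $G$ is finite, $W$ admits a measurable description $W(y,g)=(Vy,\Psi(y)(g))$ with $\Psi(y)\in\mathrm{Sym}(G)$; after composing $W$ with a right translation from $G$ (which lies in $C(T_{\va})\subseteq C((T_{\va})^p)$), one may take $\Psi(y)$ to be left-translation by some $\beta:Y\to G$, so
\[
W(y,g)=(Vy,\beta(y)g).
\]

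Third, form the commutator $\gamma:=T_{\va}\circ W\circ T_{\va}^{-1}\circ W^{-1}$. Since $T_{\va},W\in C((T_{\va})^p)$, also $\gamma\in C((T_{\va})^p)$, and $\gamma$ projects to $TVT^{-1}V^{-1}=\mathrm{Id}$ on the base, so $\gamma(y,g)=(y,B(y)g)$ for some measurable $B:Y\to G$. The relation $\gamma\in C((T_{\va})^p)$ yields the twisted cocycle identity $B(T^py)=\mathrm{Ad}(\va^{(p)}(y))\,B(y)$; ergodicity of $T^p$ and finiteness of $G$ force $B$ to be essentially constant, say $B\equiv B_0$. From $T_{\va}W=\gamma W T_{\va}$, iterating $p$ times and using $T_{\va}^pW=WT_{\va}^p$ gives
\[
\prod_{k=0}^{p-1}T_{\va}^k\gamma T_{\va}^{-k}=\mathrm{Id}.
\]
In the abelian case this reduces to $B_0^{\,p}=e$, so choosing $p$ coprime to $|G|$ gives $B_0=e$, i.e.\ $\gamma=\mathrm{Id}$ and $W\in C(T_{\va})$.

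The main obstacle will be closing this last step in full generality: for non-abelian $G$ the conjugates $T_{\va}^k\gamma T_{\va}^{-k}$ are fibre translations by $\mathrm{Ad}(\va^{(k)}(T^{-k}y))(B_0)$, and one must show that the triviality of the above product still forces $B_0=e$ once $(p,|G|)=1$. This is where the structure of ergodic finite group extensions enters decisively, together with the two freedoms on $p$: large enough to secure $C(T^p)=C(T)$ (small-spectrum hypothesis) and coprime to $|G|$ (finiteness of $G$), exactly matching the hypotheses of the lemma.
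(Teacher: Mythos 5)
First, a remark on scope: the paper itself gives no proof of this lemma --- it is imported verbatim from \cite{Fe-Ku-Le-Ma} --- so your argument has to be judged on its own. Your first two steps are essentially sound: $C(T^p)=C(T)$ for large $p$ does follow from discrete spectrum together with ergodicity of $T^p$ (which is forced by ergodicity of $(T_{\va})^p$), and projecting the graph joining of $W\in C((T_{\va})^p)$ to the base and invoking \eqref{jdis} does produce $V\in C(T)$ with $W(y,g)=(Vy,\Psi(y)(g))$. One local error: composing with a right translation does \emph{not} reduce a general fibre permutation $\Psi(y)$ to a left translation; the correct statement (the structure theorem for centralizers of ergodic group extensions, cf.\ \cite{Le-Me1}, \cite{Me}) is $\Psi(y)(g)=\beta(y)v(g)$ with $v$ a fixed automorphism of $G$, and as written your reduction is false even for abelian $G$.

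The real gap is the one you flag at the end, and it does not close as set up. From $B(T^py)=\mathrm{Ad}(\va^{(p)}(y))B(y)$, ergodicity only makes the conjugacy class of $B$ constant, not $B$ itself; and even granting $B\equiv B_0$, the identity $\prod_{k}\mathrm{Ad}(\va^{(k)}(\cdot))(B_0)=e$ only yields $B_0^{\,p}=e$ in the abelianization, i.e.\ $B_0\in[G,G]$, which is not enough. The missing idea is to work with \emph{right} rather than left translations. The commutator $\gamma=T_{\va}WT_{\va}^{-1}W^{-1}$ lies in $C((T_{\va})^p)$ and covers the identity of $Y$ (since $V\in C(T)$). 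Write $\gamma(y,g)=(y,\,g\cdot\rho(y,g))$; commutation of $\gamma$ with $(T_{\va})^p$ says exactly that $\rho$ is $(T_{\va})^p$-invariant, hence constant, say $\rho\equiv h$, by the assumed ergodicity of $(T_{\va})^p$. Thus $\gamma$ is the right translation $\sigma_h\colon(y,g)\mapsto(y,gh)$, and right translations commute with $T_{\va}$; your telescoping identity then collapses to $\sigma_{h^p}=\mathrm{Id}$, i.e.\ $h^p=e$, and $(p,|G|)=1$ gives $h=e$ with no abelianness assumption. This repair also makes the structure-theorem detour and the constancy-of-$B$ discussion unnecessary.
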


Using Lemmas~\ref{l:essc} and~\ref{l:essc2}, we obtain the following.

\begin{Prop}\label{p:essc2}
Assume that $T_{\va}$ is an ergodic  $G$-extension ($G$ a finite group) $T\in{\rm Aut}\,\ycn$ with discrete spectrum, so that $EC(T_{\va})$ is finite.
  Assume moreover that $(T_{\va})^p$ is ergodic for all $p\in\mathscr{P}$ sufficiently large. Then $(T_{\va})^p$ and $(T_{\va})^q$ are not isomorphic for all $p,q\in\mathscr{P}$ which are different and large enough.
\end{Prop}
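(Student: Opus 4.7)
The proof is a direct combination of Lemma~\ref{l:essc2} and Lemma~\ref{l:essc} applied to $T_\varphi$ itself. First, I would invoke Lemma~\ref{l:essc2} using the data already present in the hypothesis: the base $T\in{\rm Aut}\,\ycn$ has discrete spectrum, the cocycle $\varphi$ takes values in the finite group $G$, the extension $T_\varphi$ is ergodic, and $(T_\varphi)^p$ is ergodic for all sufficiently large primes $p$. All the hypotheses of Lemma~\ref{l:essc2} being met, its conclusion yields the centralizer equality
\[
C(T_\varphi)=C\bigl((T_\varphi)^p\bigr)\quad\text{for all sufficiently large } p\in\mathscr{P}.
\]

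Then I would feed this into Lemma~\ref{l:essc}, with the role of its ``$T$'' played by the ergodic automorphism $T_\varphi$. The two hypotheses needed there are finiteness of $EC(T_\varphi)$, which is part of the assumption of Proposition~\ref{p:essc2}, and the centralizer equality $C(T_\varphi)=C((T_\varphi)^p)$ for all sufficiently large primes $p$, which is precisely what the previous step provides. The conclusion of Lemma~\ref{l:essc} is that $(T_\varphi)^p$ and $(T_\varphi)^q$ are not isomorphic for distinct sufficiently large primes $p,q$, which is exactly the statement of Proposition~\ref{p:essc2}.

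The only minor technicality worth flagging is the aperiodicity hypothesis implicit in Lemma~\ref{l:essc}. The base $(Y,\cc,\nu,T)$ is ergodic with discrete spectrum on a non-atomic space, hence aperiodic, and an ergodic finite group extension of an aperiodic automorphism is again aperiodic; thus $T_\varphi$ is aperiodic and Lemma~\ref{l:essc} applies without obstruction. Consequently there is no substantive obstacle in the argument — all of the heavy lifting has already been packaged into Lemmas~\ref{l:essc} and~\ref{l:essc2}, and the proposition is obtained by chaining their conclusions.
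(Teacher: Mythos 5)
Your proposal is correct and follows exactly the paper's route: the paper derives Proposition~\ref{p:essc2} precisely by chaining Lemma~\ref{l:essc2} (to get $C(T_\varphi)=C((T_\varphi)^p)$ for large primes $p$) into Lemma~\ref{l:essc} applied to $T_\varphi$. Your remark on aperiodicity of $T_\varphi$ is a sensible extra check that the paper leaves implicit.
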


In case of odometers, we can prove more. Assume that $R$ is the odometer given by $\lambda$ (that is, $R\in{\rm Aut}\,(H_\lambda,m_{H_\lambda})$ is  ergodic, has discrete spectrum and the eigenvalues are roots of unity of degree $\la^k$, $k\geq1$).
Consider  $p,q\in\mathscr{P}$ relatively prime  with $\la$.
Then $R$ is isomorphic to $R^p$ (also to $R^q$), so $R^p$ is
isomorphic to $R^q$. Recall also that the only {\bf ergodic} joinings
between $R^p$ and $R^q$
are given by the graphs of isomorphisms between $R^p$
and $R^q$.

\begin{Prop}\label{p:essc3} Assume that $R$ is the $\la$-odometer.
Let $\va:H_\lambda\to G$ be a cocycle with $G$ finite,
so that $R_{\va}$ is ergodic, and $(R_{\va})^p$ is also ergodic
for $p\in\mathscr{P}$ large enough. Assume moreover
that $EC(R_{\va F})$ is finite for each proper subgroup
$F\subset G$.\footnote{\label{f:maintool} We recall that the factor,
as automorphism, is given by $R_{\va F}(x,gF)=(Rx,\va(x)gF)$. Note that, when applied to substitutions, this assumption requires $H_\lambda$ to represent the Kronecker factor, cf.\ Remark~\ref{r:unc}. However, if we replace $H_\lambda$ by $H_\lambda\times\Z/h\Z$, this assumption is satisfied and the proof of Proposition~\ref{p:essc3} remain the same.}
Then, for each different $p,q\in\mathscr{P}$ large enough,
the {\bf only} joinings\footnote{Note that we drop
the assumption of ergodicity, cf.\ the second part of Lemma~\ref{l3}.} between $(R_{\va})^p$
and $(R_{\va})^q$ that project onto  ergodic joinings
of $R^p$ and $R^q$ are relatively independent extensions
of the projections.
\end{Prop}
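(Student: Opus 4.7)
The plan is as follows. Let $\widetilde{\rho}$ be a joining of $(R_{\va})^p$ and $(R_{\va})^q$ whose projection $\rho$ onto $H_{\la}\times H_{\la}$ is an ergodic joining of $R^p$ and $R^q$. For $p,q$ large enough to be coprime with every $n_t$, both $R^p$ and $R^q$ are isomorphic to $R$ and have discrete spectrum, so by~\eqref{jdis} we may write $\rho=(m_{H_\la})_W$ for some measure-preserving isomorphism $W:(H_\la,R^p)\to(H_\la,R^q)$. Decomposing $\widetilde{\rho}$ into its ergodic components and using ergodicity of $\rho$, almost every component $\widetilde{\rho}_\omega$ also projects onto $\rho$; by the first part of Lemma~\ref{l3}, each such component is a compact group extension of $\rho$, and by the second part, once the relatively independent extension $\widehat{\rho}$ is shown to be ergodic, we obtain $\widetilde{\rho}_\omega=\widehat{\rho}$ for a.e.\ $\omega$, hence $\widetilde{\rho}=\widehat{\rho}$. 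The problem thus reduces to proving that $\widehat{\rho}$ is ergodic.

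The second step is to identify $\widehat{\rho}$ explicitly. Since $\rho$ is concentrated on the graph $\{(x,Wx):x\in H_\la\}$, the measure $\widehat{\rho}$ is supported on $\{(x,g_1,Wx,g_2):x\in H_\la,\,g_1,g_2\in G\}$ and, in the parametrization $(x,g_1,g_2)$, equals $m_{H_\la}\otimes m_G\otimes m_G$. Because $R^q W=WR^p$, the action of $(R_\va)^p\times(R_\va)^q$ reads in these coordinates as the $(G\times G)$-extension $(R^p)_\Phi$ of $R^p$ with cocycle
\[
\Phi(x):=\bigl(\va^{(p)}(x),\va^{(q)}(Wx)\bigr).
\]
Hence it suffices to show that $(R^p)_\Phi$ is ergodic for all sufficiently large distinct primes $p,q$.

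Now assume for contradiction that $(R^p)_\Phi$ is not ergodic. Then $\Phi$ is cohomologous (over $R^p$) to a cocycle taking values in a proper closed subgroup $H\subsetneq G\times G$. Ergodicity of $(R_\va)^p$, together with ergodicity of $(R_\va)^q$ pushed back via $W^{-1}$, forces $\pi_1(H)=\pi_2(H)=G$, so Goursat's lemma yields normal subgroups $N_1,N_2\triangleleft G$ and an isomorphism $\alpha:G/N_1\to G/N_2$ with $H=\{(g_1,g_2):\alpha(g_1N_1)=g_2N_2\}$. Since $H$ is proper, at least one of $N_1,N_2$ is a proper subgroup of $G$. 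Unpacking the cohomology relation -- projecting the first coordinate modulo $N_1$ and then applying $\alpha$, and projecting the second modulo $N_2$ -- one finds that the $G/N_2$-valued cocycles $\alpha\circ(\va\!\!\mod N_1)^{(p)}$ and $(\va\!\!\mod N_2)^{(q)}\circ W$ are cohomologous over $R^p$. Combining $\alpha$ (a fibrewise identification) with the base intertwiner $W$, this produces a measure-theoretic isomorphism $(R_{\va N_1})^p\cong(R_{\va N_2})^q$.

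The main obstacle is the concluding step: Lemma~\ref{l:essc} and Proposition~\ref{p:essc2} rule out $T^p\cong T^q$ for a \emph{fixed} system $T$, whereas here we must rule out an isomorphism between two potentially different systems $(R_{\va N_1})^p$ and $(R_{\va N_2})^q$. The plan is to use $\alpha$ to view both as group extensions of $R$ by the common group $G/N_1\cong G/N_2$ (replacing $\va\!\!\mod N_2$ by $\alpha^{-1}\circ(\va\!\!\mod N_2)$), and then to invoke the finiteness of $EC(R_{\va N_1})$ (granted by the hypothesis since $N_1$ is proper) combined with Remark~\ref{r:essc}: the constructed isomorphism amounts to the existence of a $q$-th root of $(R_{\va N_1})^p$ inside $C((R_{\va N_1})^p)=C(R_{\va N_1})$, which is impossible for all distinct primes $p,q$ sufficiently large. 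This contradiction forces $\widehat{\rho}$ to be ergodic, completing the proof.
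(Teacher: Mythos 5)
Your proof is correct and follows essentially the same route as the paper's: where the paper cites the general structure theory of ergodic joinings of group extensions from \cite{Le-Me1}, \cite{Me} to produce the normal subgroups $F_1,F_2$ and the isomorphism $(R_{\va F_1})^p\cong(R_{\va F_2})^q$, you rederive that structure directly via the Mackey group of the cocycle $\Phi$ and Goursat's lemma, and you make explicit the reduction of non-ergodic joinings via the ergodic decomposition and Lemma~\ref{l3}. The concluding contradiction --- the isomorphism yields a $q$-th root of $(R_{\va N_1})^p$ for a proper $N_1$, which is incompatible with the finiteness of $EC(R_{\va N_1})$ by Lemma~\ref{l:essc}, Lemma~\ref{l:essc2} and Remark~\ref{r:essc} --- is exactly the paper's.
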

\begin{proof} A general theory of groups extensions (see e.g.\ \cite{Le-Me1}, \cite{Me}) tells us that if
$$\rho\in J^e((R_{\va})^p, (R_{\va})^q)=J^e(R^p_{\va^{(p)}},R^q_{\va^{(q)}})$$
then there exist $F_1,F_2\subset G$ normal subgroups of $G$ such that an isomorphism $W$ between $R^p$ and $R^q$   (we have assumed that $\rho|_{H_\lambda\times H_\lambda}$ is the graph of $W$) lifts to
an isomorphism $\widetilde{W}$ of the factors given by $H_\lambda\times G/F_1$ and $H_\lambda\times G/F_2$. But $R^p_{\va^{(p)}F_1}=(R_{\va F_1})^p$. Moreover, $F_1,F_2$ depend on $p,q$ but altogether we have only finitely many possibilities for $F_1,F_2$. Assume that $F_1$ is a proper subgroup of $G$.
We use our assumption and Lemma~\ref{l:essc} together with Remark~\ref{r:essc} to get a contradiction. It follows that we can obtain only relatively independent extensions of graphs as joinings between $(R_\varphi)^p$ and $(R_\varphi)^q$ whenever $p\neq q$ are large enough.
\end{proof}

\begin{Remark}\label{r:essc1} By Mentzen's theorem \cite{Me1}
on (partly continuous spectrum) factors of substitutions of constant length, when $R_{\va}$
is given by a substitution (non-synchronizing, primitive)
then the natural factors $R_{\va F}$ for $F$ proper subgroup of
$G$ are also (up to measure-theoretic isomorphism)
substitutions (cf.\ Footnote~\ref{f:maintool}),
so the assumptions of Proposition~\ref{p:essc3} will be satisfied.
\end{Remark}

\subsection{Strategy of the proof of the main result}
\label{s:mainstrategy}
To prove Theorem~\ref{t:main}, we first show that
each substitutional system  $(X_{\theta},S)$ is a
{\bf topological} factor of another
substitutional system $(X_{\widehat\Theta},S)$,
where, from the measure-theoretic point of view,
the system $(X_{\widehat\Theta},S)$, which is uniquely ergodic, is
isomorphic to $R_\va$ satisfying
the assumption of Proposition~\ref{p:essc3},
see Remark~\ref{r:essc1}. Moreover, we will show
that the odometer of the original substitutional system is $R$.

Let $(H_\lambda,R)$ denote the $\la$-odometer associated with
$(X_{\widehat\Theta},S)$. If the height of $\widehat\Theta$ is one, the $\la$-odometer  is the maximal equicontinuous factor\footnote{If $h(\widehat\Theta>1$ then we must replace $H_\lambda$ with $H_\lambda\times \Z/h\Z$.}
of $(X_{\widehat\Theta},S)$. Hence, there is a continuous
equivariant map $\pi:X_{\widehat\Theta}\to H_\lambda$.
Moreover, the following observation we already used in the proof of Lemma~\ref{l4}:

\begin{Lemma}\label{l:yy} Take different $p,q\in\mathscr{P}$ sufficiently large.
Then, for each $y\in H_\lambda$,
the point $(y,y)$ is generic for an ergodic
$R^p\times R^q$ invariant measure $\kappa$, i.e.\ for each $F\in C(H_\lambda\times H_\lambda)$, we have
$$
\frac1N\sum_{n\leq N}F(R^{pn}y,R^{qn}y)\to\int_{H_\lambda\times H_\lambda}F\,d\kappa.$$.\end{Lemma}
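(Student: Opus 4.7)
The plan is to exploit the fact that, since $R$ is a translation on the compact abelian group $H_{\la}$, the product $R^p \times R^q$ is itself a translation on the compact abelian group $H_{\la} \times H_{\la}$; hence the whole system is equicontinuous and every point is generic for the Haar measure on its orbit closure.

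First I would fix $p,q\in\mathscr{P}$ sufficiently large so that $\gcd(p,n_t)=\gcd(q,n_t)=1$ for every $t\geq 1$ (possible because, by the small-spectrum assumption on $R$ recalled in Section~2.4, only finitely many primes divide the $n_t$). Under this condition the multiplications by $p$ and by $q$ are continuous group automorphisms of the profinite group $H_{\la}$, and $R^p$, $R^q$ are each isomorphic to $R$ by the Halmos--von Neumann theorem.

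Second, I would identify the orbit closure. Writing $R$ as the translation $y\mapsto y+\mathbf{1}$ by a topological generator $\mathbf{1}\in H_{\la}$, the orbit of $(y,y)$ under $R^p\times R^q$ is
\[
\{(y+pn\mathbf{1},\,y+qn\mathbf{1}):\,n\in\Z\}=(y,y)+\{(pn\mathbf{1},qn\mathbf{1}):n\in\Z\}.
\]
Its closure is therefore the coset $(y,y)+K$, where
\[
K:=\overline{\{(pn\mathbf{1},qn\mathbf{1}):n\in\Z\}}\subset H_{\la}\times H_{\la}
\]
is a closed subgroup. Since $p,q$ are invertible in $H_\la$, setting $r:=qp^{-1}\in H_\la$ one checks that $K=\{(x,rx):x\in H_{\la}\}$ is the graph of the group automorphism $x\mapsto rx$ of $H_{\la}$; in particular $K$ has full projections on both coordinates. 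This is consistent with the fact, already quoted in Section~2.4, that every ergodic joining of $R^p$ and $R^q$ is a graph joining of an isomorphism.

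Third, I would apply the standard fact that a translation by a topological generator of a compact abelian group is uniquely ergodic, with the unique invariant measure being the Haar measure of the group. Applying this to the restriction of $R^p\times R^q$ to the coset $(y,y)+K$, one sees that $(y,y)$ is generic for the Haar measure $\kappa$ of $(y,y)+K$, i.e.
\[
\frac1N\sum_{n\leq N}F(R^{pn}y,R^{qn}y)\longrightarrow\int_{H_{\la}\times H_{\la}}F\,d\kappa
\]
for every $F\in C(H_{\la}\times H_{\la})$, and the measure $\kappa$ is ergodic for $R^p\times R^q$ because translation by a topological generator of a compact group is always ergodic with respect to Haar measure. There is no genuine obstacle here: the only point requiring care is the verification that $p,q$ being coprime with every $n_t$ forces $\{(pn\mathbf{1},qn\mathbf{1}):n\in\Z\}$ to be dense in a subgroup with full projections, and this is immediate once one writes $r=qp^{-1}$.
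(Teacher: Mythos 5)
Your argument is correct. The only points that need care are exactly the ones you flag: for $p,q$ coprime to $\la$, multiplication by $p$ is a unit of the ring ${\rm liminv}\,\Z/n_t\Z$, so $r=qp^{-1}$ is a well-defined unit, $K=\overline{\{(pn\mathbf{1},qn\mathbf{1}):n\in\Z\}}$ is the graph of the continuous automorphism $x\mapsto rx$ (closures commute with the graph map and with projections by compactness), and the restriction of $R^p\times R^q$ to the coset $(y,y)+K$ is a rotation by a topological generator of the compact monothetic group $K$, hence uniquely ergodic and ergodic for its Haar measure. Your route is, however, not the one the paper takes: the paper gives no direct proof of this lemma, but rather (in the proof of Lemma~\ref{l4}) invokes a general result cited from \cite{Ku-Le} asserting that every accumulation point of the empirical measures $\frac1N\sum_{n\leq N}\delta_{(R^p\times R^q)^n(y,y)}$ is ergodic, and then combines this with the fact that ergodic self-joinings of discrete-spectrum systems are graphic (\eqref{jdis}) to conclude that the limit is the graph joining of an isomorphism $W$ between $R^p$ and $R^q$. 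Your computation is more elementary and self-contained, and it yields the identification of $\kappa$ explicitly: the coset $(y,y)+K$ is $\{(z,W(z)):z\in H_\lambda\}$ with $W(z)=rz+(y-ry)$, which intertwines $R^p$ and $R^q$ since $rp=q$; so your $\kappa$ is precisely the graph joining $(m_{H_\lambda})_W$ asserted in the sentence following the lemma. What the paper's abstract joining argument buys is uniformity: it is the same mechanism that is then relativized to the group extensions $(R_\varphi)^p$, $(R_\varphi)^q$ in Lemma~\ref{l4}, where no such explicit algebraic description is available.
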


It follows that $(y,y)$ is generic for $\kappa=(m_{H_\lambda})_W$, the graph joining
given by an isomorphism $W$ between $R^p$ and $R^q$.

Take any $x\in X_{\widehat\Theta}$. We study now the sequence
($p\neq q$ sufficiently large)
$$
\frac1N\sum_{n\leq N}\delta_{(S^p\times S^q)^n(x,x)},\;N\geq 1.$$
Any of its limit points yields a joining
$\rho\in J((R_{\va})^p,(R_{\va})^q)$.
Now, $\rho|_{H_\lambda\times H_\lambda}$ is {\bf precisely}
obtained as the limit of
$$
\frac1{N_k}\sum_{n\leq N_k}
\delta_{(R^p\times R^q)^n(\pi(x),\pi(x))},$$
for a relevant subsequence $(N_k)$. But we have already
noticed that any such limit must be an {\bf ergodic}
joining of $R^p$ and $R^q$, hence it is a graph.
Now, we use our results to obtain that
the limit of $\frac1{N_k}\sum_{n\leq N_k}
\delta_{(S^p\times S^q)^n(x,x)}$ also exists and it is
the relatively independent extension of the underlying graph
joining between $R^p$ and $R^q$.
We have proved the following.
\begin{Prop}\label{p:essc4} If different $p,q\in\mathscr{P}$
are large enough, then the set
$$
\{\rho\in J(S^p,S^q):\:\rho=\lim_{k\to\infty}
\frac1{N_k}\sum_{n\leq N_k}\delta_{(S^p\times S^q)^n(x,x)
}\text{ for some }N_k\to\infty\}
$$
is contained in the set of relative products over
the graphs of isomorphisms between $R^p$ and $R^q$.\footnote{In fact, by Lemma~\ref{l:yy}, it follows that each point $(x,x)$ is generic.}
\end{Prop}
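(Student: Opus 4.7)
The plan is to carry out the argument already sketched in Section~\ref{s:mainstrategy}, assembling three ingredients: compactness together with unique ergodicity of $(X_{\widehat\Theta},S)$ to obtain $\rho\in J(S^p,S^q)$, the genericity of the diagonal in the odometer (Lemma~\ref{l:yy}), and the joining rigidity for finite group extensions of odometers (Proposition~\ref{p:essc3}).

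First, I would fix different $p,q\in\mathscr{P}$ large enough so that both Lemma~\ref{l:yy} and Proposition~\ref{p:essc3} apply to the skew-product model $(X_{\widehat\Theta},\mu_{\widehat\Theta},S)\cong R_{\va}$ built in Section~\ref{s:mainstrategy}; cf.\ Remark~\ref{r:essc1} and Footnote~\ref{f:maintool}. Pick any $x\in X_{\widehat\Theta}$ and consider the empirical measures
$$
\mu_N=\frac{1}{N}\sum_{n\leq N}\delta_{(S^p\times S^q)^n(x,x)}
$$
on $X_{\widehat\Theta}\times X_{\widehat\Theta}$. Weak-$*$ compactness produces a convergent subsequence $\mu_{N_k}\to\rho$; the limit is $S^p\times S^q$-invariant and, by unique ergodicity of $(X_{\widehat\Theta},S)$, projects to $\mu_{\widehat\Theta}$ on each coordinate, so $\rho\in J(S^p,S^q)$.

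Next, I would push $\mu_{N_k}$ forward along the continuous equivariant map $\pi\times\pi$, where $\pi:X_{\widehat\Theta}\to H_\lambda$ is the projection onto the maximal equicontinuous factor. The push-forwards are exactly
$$
\frac{1}{N_k}\sum_{n\leq N_k}\delta_{(R^p\times R^q)^n(\pi(x),\pi(x))},
$$
and by Lemma~\ref{l:yy} the full sequence (not just a subsequence) converges to the ergodic graph joining $\kappa=(m_{H_\lambda})_W$ associated with an isomorphism $W$ between $R^p$ and $R^q$. Hence $(\pi\times\pi)_*\rho=\kappa$, which is ergodic.

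Finally, viewing $\rho$ as an element of $J((R_{\va})^p,(R_{\va})^q)$, I would apply Proposition~\ref{p:essc3}: since the projection of $\rho$ onto $H_\lambda\times H_\lambda$ is the ergodic joining $\kappa$, the proposition forces $\rho$ to coincide with the relatively independent extension of $\kappa$, i.e.\ the relative product of $(R_{\va})^p\times(R_{\va})^q$ over the graph of $W$. This is exactly the claim. The main obstacle is not a calculation but verifying that Proposition~\ref{p:essc3} applies in this precise form: one needs the \emph{projection} of $\rho$ to be ergodic (while $\rho$ itself is \emph{not} assumed ergodic), and this is precisely what Lemma~\ref{l:yy} delivers for every diagonal point $(\pi(x),\pi(x))$ once $p,q$ are coprime to all $n_t$.
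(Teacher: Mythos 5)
Your proposal is correct and follows essentially the same route as the paper: the paper's "proof" of Proposition~\ref{p:essc4} is exactly the preceding discussion in Section~\ref{s:mainstrategy}, namely extracting a weak-$*$ limit joining, projecting via $\pi\times\pi$ onto the odometer where Lemma~\ref{l:yy} forces an ergodic graph joining, and then invoking Proposition~\ref{p:essc3} (which, as you rightly stress, only requires ergodicity of the projection, not of $\rho$ itself). Nothing is missing.
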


Assume now that
\beq\label{warunek}
\mbox{$F\in C(X_{\widehat\Theta})$ and
$F\perp L^2(\pi^{-1}(\cb(H_\lambda)))$.}\eeq
Again, fix $x\in X_{\widehat\Theta}$. We have
$$
\frac1{N_k}\sum_{n\leq N_k}F(S^{pn}x)\ov{F(S^{qn}x)}\to
\int_{X_{\widehat\Theta}\times X_{\widehat\Theta}}
F\ot\ov{F}\,d\rho=$$$$
\int_{H_\lambda\times H_\lambda}\E(F\ot\ov{F}|H_\lambda\times H_\lambda)\,d\rho|_{H_\lambda\times H_\lambda}=
\int_{H_\lambda}\E(F|H_\lambda)\cdot\ov{\E(F|H_\lambda)\circ W}\,d\rho=0.$$
by the definition of the relative product.

The only thing which is missing now is to be sure that we have
sufficiently many functions $F$ satisfying~\eqref{warunek}.  In fact,
we aim at showing that $(X_{\widehat\Theta},S)$ has a topological
factor $(X_{\theta'},S)$, which measure-theoretically is equal to
$(H_{\la},R)$ and for each $F\in C(X_{\widehat\Theta})$, we have
\beq\label{dobre}
F=F'+F'',\eeq
where $F'\in C(X_{\theta'})$ (with some abuse of notation), and
$F''\perp L^2(\pi^{-1}(\mathcal{B}(H_\lambda)))$
(of course, $F''$ is also continuous).
This will allow us
to conclude
the proof of Theorem~\ref{t:main} using Theorem~\ref{t:kbsz} for $F''$
and dealing separately with $F'$.

In  the bijective case, the existence of a ``good''
$\widehat{\Theta}$ is known, see Section~\ref{s:bij} below:
in $(X_{\widehat\Theta},S)$ we have many ``good''
continuous function, as the Kronecker factor of
$(X_{\widehat\Theta},S)$ has a ``topological'' realization
(see \cite{Fe-Ku-Le-Ma}). However, in the general case such an
approach seems to be unknown, and in Section~\ref{s:proof},
we will show a new general construction of an extension of
a substitutional system in which we will see sufficiently
many continuous functions satisfying~\eqref{warunek}.

\section{Proof of Theorem~\ref{t:main} in the bijective and
quasi-bijective case}\label{s:bij}
A substitution $\theta:\A\to\A^{\la}$ is called {\em quasi-bijective}\footnote{The Rudin-Shapiro substitution is a prominent example of a quasi-bijective substitution which is not bijective.} if for all $n\geq n_0$ and all $j\in\{0,\ldots,\lambda^n-1\}$, we have
$$
c(\theta)=|\{\theta^n(a)_j:\:a\in \A \}|.$$
If, additionally, $c(\theta)=|\A|$, then we speak about a
{\em bijective}\footnote{Even for a bijective substitution, the height can be $>1$, see Example~3 in \cite{Me1}. Also, substitution $\widehat{\Theta}$ in Example~\ref{ex:bij} enjoys the same properties.} substitution (sometimes,
such a substitution is also called {\em invertible}).
The proof of Theorem~\ref{t:main} in the bijective case
is provided in \cite{Fe-Ku-Le-Ma}. The proof  is also provided
in \cite{Fe-Ku-Le-Ma} for the Rudin-Shapiro substitution and the proof can be repeated for some other quasi-bijective substitutions.
The proof of Theorem~\ref{t:main} covers the general case.

\section{Substitutions of constant length - one more point of view} \label{s:proof}

\subsection{Substitution joinings of substitutional systems}
Assume that we have two substitutions $\theta:\A\to \A^{\la}$
and $\zeta:\B\to \B^{\la}$. Assume that
\beq\label{s1}
\ca\subset \A\times \B,\; p_\A(\ca)=\A,\;p_\B(\ca)=\B,\eeq
where $p_\A$, $p_\B$ stand for the projections on $\A$ and $\B$,
respectively. Moreover, we assume that
\beq\label{s2}
(a,b)\in\ca \;\Rightarrow (\theta(a)_j,\zeta(b)_j)\in\ca
\eeq
for each $j=0,\ldots,\la-1$. Then it is easy to see that the formula
\beq\label{s3}
\Sigma(a,b)=(\theta(a)_0,\zeta(a)_0)(\theta(a)_1,\zeta(a)_1)\ldots
(\theta(a)_{\la-1},\zeta(a)_{\la-1})\eeq
defines a substitution
$\Sigma:\ca\to\ca^{\la}$ of length $\la$. We can also use notation:
$$
\Sigma=\theta\vee\zeta.$$

\begin{Remark}\label{r:sjoi}
We note that in general the above $\Sigma$ need not be primitive, that is, it does not necessarily satisfy \eqref{su3}. Indeed, consider for example $\zeta=\theta$ and then take for $\mathcal{A}$ the product set $\A\times \A$. On the other hand, the ``diagonal'' $\ca:=\{(a,a):\: a\in\A\}$ yields a primitive substitution (clearly isomorphic to $\theta$).

In order to see a less trivial primitive example, consider $\A =\{0,1,2\}$ and $\zeta = \theta$, where (see the last example in Section~2 \cite{De})
$$
0\mapsto 010,\;1\mapsto 102,\; 2\mapsto 201$$
with $\mathcal{A} = \{(0,1), (1,0), (0,2), (2,0)\}$, which gives a primitive substitution different from the ``diagonal'' one.\footnote{It is not hard to see that in this example, $\theta$ is primitive and $c(\theta)=2$. If by $u$ we denote the fixed point of $\theta$ obtained by the iterations of $0$, then $S_0=2\N$, so $g_0=2$ and hence $h(\theta)=2=c(\theta)$. This means that the dynamical system $(X_\theta,\mu_\theta, S)$ has discrete spectrum and hence since $\Sigma$ is also primitive, as measure-theoretic dynamical systems,
$(X_\theta,\mu_\theta,S)$ and $(X_\Sigma,\mu_\Sigma,S)$ are isomorphic, cf.~\eqref{jdis}.} If we slightly change the definition of $\theta$, namely:
$$
0\mapsto 010,\;1\mapsto 201,\;2\mapsto 102$$
(which is still primitive with $c=h=2$) then $\Sigma$ is primitive but~\eqref{su4} is not satisfied.
\end{Remark}

\begin{Remark}\label{r:sjoi11}
Note that $X_{\Sigma}$
is a topological joining of $X_\theta$ and $X_\zeta$. Indeed,
up to a
natural rearrangement
of coordinates, $X_{\theta\vee\zeta}\subset X_\theta\times X_\zeta$.
Then, for every $(x,y)\in X_{\Sigma}$, the orbits of $x$ and $y$
are dense in $X_\theta$ and $X_\zeta$, respectively.\footnote{This
follows by the minimality of
$(X_\theta,S)$ and $(X_\zeta,S)$, respectively.}
Now, the image of  the natural projection $(x,y)\mapsto x$ is
contained in $X_\theta$ and if $\theta(u)=u$, $\zeta(v)=v$ then
$(\theta\vee\zeta)(u,v)=(u,v)$ (after a rearrangement of coordinates).
So $u$ is in the image of $p_{X_\theta}(x,y)=x$ and therefore
\beq\label{factor0}
p_{X_\theta}:X_{\theta\vee\zeta}\to X_\theta, \;
p_{X_\zeta}(X_{\theta\vee\zeta})=X_\zeta.\eeq
Since $p_{X_\theta}$ is continuous and equivariant, it settles a topological
factor map between the relevant substitutional systems.\end{Remark}

\begin{Def} We call $\Sigma$ a {\em substitution joining} of
$\theta$ and $\zeta$ if $\Sigma$ satisfies \eqref{su3} and
\eqref{su4} (note that~\eqref{su6} and~\eqref{su7} are satisfied automatically as $\theta$, $\zeta$ are substitutions).\end{Def}

\subsection{Joining with the synchronizing part}
In general, when dealing with the dynamical system given
by a substitution of constant length, we would like to see its
Kronecker factor as a topological factor realized
``in the same category of objects'', that is, realized by another substitution.
This is not always possible, even in the class of
bijective substitutions, see Herning's example \cite{He}.
For the purpose of orthogonality with an arithmetic function $\bfu$, we need
however only an  extension of the original substitution
which is given by another substitution (of the same length) and
require that in the extended system we have a ``good''
realization of the Kronecker factor.
This is done by a joining of $\theta$ with its synchronizing
part.

\subsubsection[Synchronizing part]{Synchronizing part of $\theta$}
Given a substitution $\theta:\A\to \A^{\la}$, set
$$
\mathcal{X}(\theta) = \mathcal{X} :=\{M\subset \A:\: M\text{ realizes the column number of }\theta\},$$
i.e.
there exist $k_M\geq1$, $j_M\in\{0,\ldots,\la^{k_M}-1\}$ such that
\begin{align*}
  M &=\{\theta^{k_M}(a)_{j_M}:\:a\in \A\},\;|M|=c(\theta).
\end{align*}
We can assume without loss of generality that $k_M = k_{M'}$ for all $M,M'\in \mathcal{X}$.
Note that when $M\in\mathcal{X}$ then for each $j=0,\ldots,\la-1$, we have
$$
\left|\theta(M)_j\right|=c(\theta),$$
where by $\theta(M)_j$ we denote the set $\{\theta(a)_j:\:a\in M\}$.
It follows that the formula
\beq\label{s4}
\widetilde{\theta}(M)=\theta(M)_0\theta(M)_1\ldots
\theta(M)_{\la-1}\eeq
defines a substitution $\widetilde{\theta}:\mathcal{X}\to
\mathcal{X}^\la$.

\begin{Prop}\label{p:listas}
Substitution $\widetilde\theta$ has the following basic properties:\\
(i) $c(\widetilde{\theta})=1$.\\
(ii) $\widetilde\theta$ is primitive.\\
(iii)  $h(\widetilde\theta)=1$.
\end{Prop}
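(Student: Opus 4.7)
The key formula driving the proof is that, for every $M\in\mathcal{X}$, every $k\ge 1$, and every $0\le j<\la^k$,
\[
\widetilde{\theta}^k(M)_j=\{\theta^k(m)_j:\:m\in M\},
\]
which one verifies by induction on $k$ from~\eqref{s4} and~\eqref{wzorkoc}. Using the (common, by assumption) exponent $k_0$ so that every $M\in\mathcal{X}$ has the form $M=\{\theta^{k_0}(a)_{j_M}:\:a\in\A\}$, the right-hand side rewrites as the column $\{\theta^{k+k_0}(a)_{j_M\la^k+j}:\:a\in\A\}$ of $\theta^{k+k_0}$. Its cardinality is at least $c(\theta)$ by the definition of $c(\theta)$ and at most $|M|=c(\theta)$, so $|\widetilde{\theta}^k(M)_j|=c(\theta)$; in particular $\widetilde{\theta}^k(M)_j\in\mathcal{X}$, confirming that $\widetilde{\theta}$ is indeed a substitution on $\mathcal{X}$.

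For (i), I would fix $M_0\in\mathcal{X}$ written $M_0=\{\theta^{k_0}(a)_{j_0}:\:a\in\A\}$. For an arbitrary $M\in\mathcal{X}$, since $M\subseteq\A$ one has
\[
\widetilde{\theta}^{k_0}(M)_{j_0}=\{\theta^{k_0}(m)_{j_0}:\:m\in M\}\subseteq\{\theta^{k_0}(a)_{j_0}:\:a\in\A\}=M_0,
\]
and, both sides having cardinality $c(\theta)=|M_0|$, this inclusion is an equality. Hence the $j_0$-th position of $\widetilde{\theta}^{k_0}$ is constantly $M_0$, which gives $c(\widetilde{\theta})=1$.

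For (ii), I would run the analogous argument with $M_0$ fixed and $M'\in\mathcal{X}$ arbitrary, $M'=\{\theta^{k_0}(a)_{j_{M'}}:\:a\in\A\}$. Since $M_0\subseteq\A$,
\[
\widetilde{\theta}^{k_0}(M_0)_{j_{M'}}=\{\theta^{k_0}(m_0)_{j_{M'}}:\:m_0\in M_0\}\subseteq M',
\]
and equality again follows from $|\widetilde{\theta}^{k_0}(M_0)_{j_{M'}}|=c(\theta)=|M'|$. Combining this with~(i) through~\eqref{wzorkoc} applied to $\widetilde{\theta}$ yields, for every $M,M'\in\mathcal{X}$,
\[
\widetilde{\theta}^{2k_0}(M)_{j_0\la^{k_0}+j_{M'}}=\widetilde{\theta}^{k_0}\!\bigl(\widetilde{\theta}^{k_0}(M)_{j_0}\bigr)_{j_{M'}}=\widetilde{\theta}^{k_0}(M_0)_{j_{M'}}=M',
\]
so every element of $\mathcal{X}$ appears in $\widetilde{\theta}^{2k_0}(M)$, uniformly in $M$; hence $\widetilde{\theta}$ is primitive. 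Statement (iii) is then immediate from Lemma~\ref{l:hc} applied to $\widetilde{\theta}$, now known to be primitive with $c(\widetilde{\theta})=1$. I do not foresee any substantial obstacle: the entire argument reduces to the key identity above and a single double-inclusion/cardinality argument, used twice.
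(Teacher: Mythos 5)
Your proposal is correct and follows essentially the same route as the paper: the synchronization identity $\widetilde{\theta}^{k_0}(M')_{j_M}=\theta^{k_0}(M')_{j_M}=M$ (proved by the same inclusion-plus-cardinality argument, using that the restricted column sits inside a full column of $\theta^{k_0+k_{M'}}$ and hence has at least $c(\theta)$ elements) gives (i) and (ii) at once, and (iii) is Lemma~\ref{l:hc}. The only cosmetic difference is that you pass through $\widetilde{\theta}^{2k_0}$ for primitivity, whereas the paper observes that $\widetilde{\theta}^{k_0}(M')_{j_M}=M$ already holds for every pair $M,M'$, so the exponent $k_0$ suffices.
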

\begin{proof}
For (i),
let $M\in\mathcal{X}$ and suppose that $\theta^{k_M}(\A)_{j_M}=M$.
Then, for each $M'\in\mathcal{X}$,
we have $\theta^{k_M}(M')_{j_M}=M$.
The validity of (ii) follows by the same argument.
Finally, (iii) follows from
Lemma~\ref{l:hc}.\end{proof}

\begin{Def} We call $\widetilde{\theta}$ the {\em
synchronizing part} of $\theta$.\end{Def}

Also, note that
\beq\label{rzut}
\bigcup_{M\in\mathcal{X}}M=\A.\eeq
Indeed, fix $M\in\mathcal{X}$ and let $a\in\A$. Take any $x\in M$ and (by primitivity) choose $k\geq1$ so that $\theta^k(x)_j=a$ for some $0\leq j<\la^k$. Then $a\in\widetilde{\theta}^k(M)_j$ and $\widetilde\theta^k(M)_j\in\mathcal{X}$.

\begin{Remark}\label{r:Mpartition}
If the union in~\eqref{rzut} is additionally a partition, then we
obtain an equivalence relation on $\A$ which is $\theta$-consistent (cf.~\eqref{su5}) and the dynamical system
$(X_{\widetilde\theta},S)$ is a topological factor
of $(X_\theta,S)$.
However, in general, there is no reason for
$(X_{\widetilde\theta},S)$ to be a topological factor of
$(X_\theta,S)$.\end{Remark}

Note that, in general, the union in
\eqref{rzut} is not a partition as the following example shows.

\begin{Example}\label{ex:ex1}
Consider the following substitution $\theta$:
\begin{align}\label{eq:example}
\begin{split}
  \theta(a) &= ab\\
  \theta(b) &= ca\\
  \theta(c) &= ba.
\end{split}
\end{align}
By looking at $\theta^3$, we see that $\theta$ is primitive.
Moreover, straightforward computations give
that $c(\theta) = 2$ and $h(\theta) = 1$. In fact,
$\mathcal{X} = \{\{a,b\},\{a,c\}\}$.
Therefore, the union in \eqref{rzut} is not a
partition.

Note that the first column of~\eqref{eq:example} and all its
iterates yields
set $\{a,b,c\}$, so $\theta$ is not quasi-bijective.

Furthermore by replacing $\{a,b\}$ by 0 and $\{a,c\}$ by 1,
we compute $\widetilde{\theta}$:
\begin{align*}
  \widetilde{\theta}(0) &= 10\\
  \widetilde{\theta}(1) &= 00.
\end{align*}

\end{Example}
We also have the following result:

\begin{Prop}\label{p:charbij+qb}
Let $\theta:\A\to\A^{\lambda}$ be a substitution. Then:
\beq\label{bijtri}
\mbox{$\theta$ is bijective if and only if $\widetilde{\theta}$ is trivial.}\eeq
\beq\label{qbijtri}\begin{array}{c}
\mbox{$\theta$ is quasi-bijective if and only if
$X_{\widetilde{\theta}}$ is finite, i.e.}\\
\mbox{a fixed point $\widetilde{u}$ of $\widetilde\theta$ is periodic}.\end{array}\eeq
\end{Prop}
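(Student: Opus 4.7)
For \eqref{bijtri}: if $\theta$ is bijective then any $M\in\mathcal{X}$ satisfies $|M|=c(\theta)=|\A|$ and $M\subseteq\A$, forcing $M=\A$, so $\mathcal{X}=\{\A\}$ and $\widetilde\theta$ is trivial. Conversely, $|\mathcal{X}|=1$ together with $\bigcup_{M\in\mathcal{X}}M=\A$ (see \eqref{rzut}) forces the unique element of $\mathcal{X}$ to be $\A$, so $c(\theta)=|\A|$.

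For the forward direction of \eqref{qbijtri}, my plan is to show that quasi-bijectivity makes the column words stabilize periodically. Writing $\theta^{n+1}=\theta\circ\theta^{n}$ gives
\[
  C_{i\la+r}^{(n+1)}=\theta(C_i^{(n)})_{r},
\]
while writing $\theta^{n+1}=\theta^{n}\circ\theta$ yields $C_{j_1\la^n+i'}^{(n+1)}=\theta^n(C_{j_1}^{(1)})_{i'}\subseteq C_{i'}^{(n)}$; under quasi-bijectivity both sides have cardinality $c$ for $n\ge n_0$, so equality holds and iteration produces $C_j^{(m)}=C_{j\bmod\la^{n_0}}^{(n_0)}$ for all $m\ge n_0$, $j<\la^m$. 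Taking $M_0:=C_0^{(n_0)}\in\mathcal{X}$ one checks $\widetilde\theta(M_0)_0=C_0^{(n_0+1)}=M_0$, so $M_0$ is a valid seed for a fixed point $\widetilde u=\lim_k\widetilde\theta^k(M_0)$, and $\widetilde u[j]=C_j^{(n_0+k)}=C_{j\bmod\la^{n_0}}^{(n_0)}$, exhibiting periodicity of period $\la^{n_0}$.

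The converse is the delicate step. Given $\widetilde u$ periodic (equivalently, $X_{\widetilde\theta}$ finite, by minimality of the primitive $\widetilde\theta$), I would first pass to the quotient substitution $\widetilde\theta^{\sim}$ defined by $M\sim M'\iff \widetilde\theta(M)=\widetilde\theta(M')$. This $\widetilde\theta^{\sim}$ is primitive of length $\la$, is $1$-$1$ on letters by construction, is topologically isomorphic to $\widetilde\theta$ at the level of subshifts, and still satisfies $c(\widetilde\theta^{\sim})=1$ (singleton columns survive any quotient). Let $P^{\sim}$ be the minimal period of the corresponding fixed point $\widetilde u^{\sim}$. Iterating $\widetilde\theta^{\sim}$ enough times so that the image window of length $\la^k$ exceeds $P^{\sim}$, the $1$-$1$ hypothesis propagates through the iteration and identifies the quotient alphabet bijectively with $\Z/P^{\sim}\Z$ via $j\mapsto\widetilde u^{\sim}[j]$ while forcing $\gcd(P^{\sim},\la)=1$ (any common factor $d>1$ would exhibit $P^{\sim}/d$-periodicity of $\widetilde u^{\sim}$, contradicting minimality). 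In these coordinates $\widetilde\theta^{\sim}$ becomes the affine map $j\mapsto(\la j,\la j+1,\ldots,\la j+\la-1)\bmod P^{\sim}$, so $c(\widetilde\theta^{\sim})=1$ forces $P^{\sim}\mid\la^k$ for some $k$; combined with $\gcd(P^{\sim},\la)=1$, this gives $P^{\sim}=1$ and hence $\widetilde u^{\sim}$ constant.

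To conclude, $\widetilde u^{\sim}$ constant means $V:=\widetilde\theta(\widetilde u[j])$ is independent of $j$, so $\widetilde u=VVV\cdots$ has period $\la$, and every $M\in\mathcal{X}$ (which appears in $\widetilde u$ by primitivity) satisfies $\widetilde\theta(M)=V$; equivalently $\theta(M)_r=V_r$ for $r<\la$, independently of $M\in\mathcal{X}$. Using $\A=\bigcup_{M\in\mathcal{X}}M$ from \eqref{rzut},
\[
  C_r^{(1)}=\theta(\A)_r=\bigcup_{M\in\mathcal{X}}\theta(M)_r=V_r\in\mathcal{X},
\]
so every column at level $1$ already realizes $c$. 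Iterating $\widetilde\theta$ (which preserves $\mathcal{X}$) yields $C_j^{(n)}\in\mathcal{X}$ for every $n\ge 1$, so $\theta$ is quasi-bijective (with $n_0=1$). I expect the main obstacle to be the quotient/coprimality step in the third paragraph; once one sees $\widetilde\theta^{\sim}$ as an affine map on $\Z/P^{\sim}\Z$, the numerical contradiction combining $\gcd(P^{\sim},\la)=1$ with $P^{\sim}\mid\la^k$ becomes automatic and the rest is bookkeeping with the column identity above.
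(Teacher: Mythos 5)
Your proof of \eqref{bijtri} and of the forward implication in \eqref{qbijtri} is correct and essentially the paper's argument: the paper sets $M_j:=\theta^{n_0}(\A)_j$, observes $\widetilde\theta^{n_0}(M)_j=M_j$ for every $M\in\mathcal{X}$ by a cardinality count, and reads the period $\la^{n_0}$ off the fixed-point relation; your column bookkeeping is the same computation. For the converse of \eqref{qbijtri} you take a genuinely different route. The paper first upgrades an arbitrary period $p$ of $\widetilde u$ to a period of the form $\la^{n_0}$ via Proposition~\ref{p:cn=1} (WRAP approximation by $\la^n$-periodic sequences and a triangle inequality in $d_W$), then uses primitivity of $\widetilde\theta$ and \eqref{rzut} to show that every column of $\theta^{n_0}$ equals $\widetilde u[j_0]$. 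You instead reduce to a letter-injective quotient, identify it with the affine substitution $j\mapsto(\la j,\dots,\la j+\la-1)$ on $\Z/P^{\sim}\Z$, and play $\gcd(P^{\sim},\la)=1$ against $c=1$ to force $P^{\sim}=1$. Your coprimality step is sound (if $d=\gcd(P^{\sim},\la)>1$, then $i\equiv j\pmod{P^{\sim}/d}$ gives $i\la\equiv j\la\pmod{P^{\sim}}$, hence equal length-$\la$ windows, hence equal substitution images, hence equal letters by injectivity, so $\widetilde u^{\sim}$ is $P^{\sim}/d$-periodic), and your route is more elementary in that it avoids the Weyl pseudo-metric entirely; the paper's route buys a direct reuse of the Corollary~\ref{c:weyl} machinery and avoids any alphabet reduction.

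One imprecision needs repair: a single quotient by $M\sim M'\iff\widetilde\theta(M)=\widetilde\theta(M')$ is \emph{not} automatically $1$-$1$ on letters. Distinct classes $[M]\ne[M']$ only give $\widetilde\theta(M)_j\ne\widetilde\theta(M')_j$ for some $j$, while the quotient substitution records the classes $[\widetilde\theta(M)_j]$, which may coincide for every $j$ (i.e.\ $\widetilde\theta^2(M)=\widetilde\theta^2(M')$ although $\widetilde\theta(M)\ne\widetilde\theta(M')$). You must iterate the quotient until it stabilizes -- finitely many steps on a finite alphabet, amounting to the relation $\widetilde\theta^k(M)=\widetilde\theta^k(M')$ for some fixed large $k$. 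This is harmless for the rest of your argument: constancy of the reduced fixed point then yields $V:=\widetilde\theta^k(\widetilde u[j])$ independent of $j$, so $\widetilde u$ has period $\la^k$ and the closing column identity gives quasi-bijectivity with $n_0=k$ rather than $n_0=1$. With that correction, and after disentangling the slightly circular phrasing of the bijection-plus-coprimality step (coprimality must come first, as injectivity of $j\mapsto\widetilde u^{\sim}[j]$ on $\Z/P^{\sim}\Z$ uses it), the argument is complete.
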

\begin{proof}
First of all,~\eqref{bijtri} follows directly from \eqref{rzut}.
Let us pass to the proof of~\eqref{qbijtri}.

$\Rightarrow$: We assume that $\theta$ is quasi-bijective, i.e. for all $n\geq n_0$ and all $j<\la^n$, we have
\begin{align*}
  |\theta^n(\A)_j| = c(\theta).
\end{align*}
We define $M_j := \theta^{n_0}(\A)_j$ and it follows immediately that $\widetilde{\theta}^{n_0}(M)_j = M_j$
for all $M \in \mathcal{X}$. Let $\widetilde{u}$ be a fixed point of $\widetilde{\theta}$. Now, for $i\geq 0$ and $0\leq j < \la^{n_0}$, we find that (use~\eqref{wzorkoc} letting $\ell\to\infty$),
\begin{align*}
      \widetilde{u}[j + i \la^{n_0}] = \widetilde{\theta}^{n_0}(\widetilde{u})[j+i \la^{n_0}]
      = \widetilde{\theta}^{n_0}(\widetilde{u}[i])_j = M_j,
    \end{align*}
which shows that $\widetilde{u}$ is periodic (in fact, $\la^{n_0}$ is a period of $\widetilde u$).

$\Leftarrow$: Let $\widetilde{u}$ be a periodic fixed point of $\widetilde{\theta}$ with period $p$. First, we claim  that $\widetilde{u}$ is also periodic with period $\la^{n_0}$ for some $n_0\geq 0$. Indeed,
by Proposition~\ref{p:cn=1} (and Proposition~\ref{p:listas}), it follows that, in the Weyl pseudo-metric $d_W$, we can approximate $\widetilde{u}$ by periodic sequences that have period $\la^n$.
Thus, there exists a $\la^{n_0}$-periodic sequence $v$ such that
\begin{align*}
  d_W(\widetilde{u}, v) < \frac{1}{2p}.
\end{align*}
By basic properties of $d_W$, we obtain
\begin{align*}
  d_W(S^{\la^{n_0}}(\widetilde{u}), S^{\la^{n_0}}(v)) \leq d_W(\widetilde{u}, v) < \frac{1}{2p}.
\end{align*}
Hence, the triangle inequality implies
\begin{align*}
  d_W(\widetilde{u}, S^{\la^{n_0}}(\widetilde{u})) < \frac{1}{p}.
\end{align*}
Since $\widetilde{u}$ and $S^{\la^{n_0}}(\widetilde{u})$ are both $p$-periodic functions with distance less that $1/p$, they must coincide and the claim follows.

As, by Proposition~\ref{p:listas}, $\widetilde{\theta}$ is primitive, there exists $n\geq 0$ such that for each $M \in \mathcal{X}$, we can find $j_1<\la^n$ such that $\widetilde{\theta}^{n}(\widetilde{u}[0])_{j_1} = M$. Moreover, for all $j_0 < \la^{n_0}$ and all $j<\la^{n}$ (use~\eqref{wzorkoc} and $j_0+j\la^{n_0}<\la^{n+n_0}$), we have
\begin{align*}
  \widetilde{u}[j_0] = \widetilde{u}[j_0 + j \la^{n_0}] = \widetilde{\theta}^{n+n_0}(\widetilde{u})[j_0 + j \la^{n_0}] = \\
  \widetilde{\theta}^{n+n_0}(\widetilde{u}[0])[j_0 + j \la^{n_0}]=
  \widetilde{\theta}^{n_0}(\widetilde{\theta}^n(\widetilde{u}[0])_j)_{j_0}.
\end{align*}
Letting $j=j_1$, this shows that $\widetilde{\theta}^{n_0}(M)_{j_0} = \widetilde{u}[j_0]$ for all $M \in \mathcal{X}, j_0 < \la^{n_0}$.
    As $\mathcal{X}$ covers $\A$ in view of~\eqref{rzut}, this shows that for all $j_0 < \la^{n_0}$, we have $\widetilde{\theta}^{n_0}(\A)_{j_0} = \widetilde{u}[j_0]$ and, therefore,
    \begin{align*}
      |\widetilde{\theta}^{n_0}(\A)_{j_0}| = c(\widetilde{\theta}),
    \end{align*}
    which concludes the proof.
 \end{proof}

\begin{Remark} It follows from Propositions~\ref{p:listas} and~\ref{p:charbij+qb}
that once $\theta$ is {\bf not} quasi-bijective,
$(X_{\widetilde\theta},S)$ is  a ``realization'' of $(H_\lambda, R)$.
\end{Remark}

\subsubsection[Joining a substitution with its synchronizing part]{Joining $\theta$ with its synchronizing part}
Define $$\ov{\mathcal{X}}:=\{(a,M):\:a\in M, M\in \mathcal{X}\}
\subset\A\times\mathcal{X}.$$
Now, in view of~\eqref{rzut}, the first compatibility condition~\eqref{s1} immediately follows. If $(a,M)\in \ov{\mathcal{X}}$ then $a\in M$, so for each $j=0,\ldots,\la-1$, we have $\theta(a)_j\in\theta(M)_j$, and in view of~\eqref{s4}, $\theta(a)_j\in \widetilde\theta(M)_j$.
Therefore, the second compatibility condition~\eqref{s2} also
follows.

Set
$$
\Theta:=
\theta\vee\widetilde{\theta}:
\ov{\mathcal{X}}\to\ov{\mathcal{X}}^{\la},$$
$$
\Theta (a,M)= (\theta(a)_0,\widetilde{\theta}(M)_0)(\theta(a)_1,
\widetilde{\theta}(M)_1)
\ldots (\theta(a)_{\la-1},\widetilde{\theta}(M)_{\la-1}).$$

\begin{Remark}\label{r:prolon1} Although, by primitivity, we can
assume that there are
$a\in\A$ and $M\in\mathcal{X}$ such that $\theta(a)_0=a$ and
$\widetilde\theta(M)_0=M$,
it may happen that~\eqref{su4} is not satisfied for $\Theta$.
However, as it does for any substitution of constant length, there exists some $k \in \N$ such that $\Theta^k =\theta^k \vee \widetilde{\theta^k}$~\footnote{\label{f:wzorek} We always have
$\Theta^k =(\theta\vee \widetilde{\theta})^k =\theta^k \vee \widetilde{\theta^k}=\theta^k\vee \widetilde\theta^k$.} satisfies~\eqref{su4}. Since
the following proposition shows that $\Theta$ satisfies \eqref{su3}, which assures us that $X_{\Theta} = X_{\Theta^k}$,
we can assume without loss of generality that $\Theta$ satisfies~\eqref{su4}.
\end{Remark}

\begin{Prop}\label{p:prim1}
Substitution $\Theta$ is primitive and $h(\Theta)=h(\theta)$.
\end{Prop}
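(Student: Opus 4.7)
My plan is to exhibit a fixed point $\bar u$ of $\Theta$ (passing to an iterate so that \eqref{su4} holds for $\Theta$), written as $\bar u = (u, \widetilde{u})$ where $u$ and $\widetilde{u}$ are the induced fixed points of $\theta$ and $\widetilde{\theta}$ respectively. Primitivity of $\Theta$ will follow once I establish (a) every letter of $\ov{\mathcal{X}}$ appears in $\bar u$, and (b) $\bar u$ is uniformly recurrent. Indeed, from (a) and (b) there exists $L$ such that every length-$L$ window of $\bar u$ contains all of $\ov{\mathcal{X}}$; for any $K$ with $\lambda^K \geq L$ and any $(a,M) \in \ov{\mathcal{X}}$, writing $(a,M) = \bar u[r]$ using (a) and identifying $\Theta^K((a,M))$ with a length-$\lambda^K$ window of $\bar u$ via the fixed-point identity, we obtain \eqref{su3}.

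For (a), fix $(b, N) \in \ov{\mathcal{X}}$ and write $N = \theta^{k_0}(\A)_{j_0}$ with $b = \theta^{k_0}(a_b)_{j_0}$ for some $a_b \in \A$. Primitivity of $\theta$ yields $u[r_0] = a_b$ for some $r_0$, and \eqref{wzorkoc} applied to $u = \theta^{k_0}(u)$ gives $u[r_0 \lambda^{k_0} + j_0] = b$. Analogously, $\widetilde{u}[r_0 \lambda^{k_0} + j_0] = \widetilde{\theta}^{k_0}(\widetilde{u}[r_0])_{j_0}$, which is a subset of $\theta^{k_0}(\A)_{j_0} = N$ of cardinality $c(\theta) = |N|$ by the bijection property (for each $M \in \mathcal{X}$ the map $a \mapsto \theta^{k_0}(a)_{j_0}$ restricts to a bijection $M \to \widetilde{\theta}^{k_0}(M)_{j_0}$, as discussed just after \eqref{s4}), hence equals $N$. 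Thus $\bar u[r_0 \lambda^{k_0} + j_0] = (b, N)$.

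For (b), I will invoke the recognizability property of primitive aperiodic substitutions of constant length: sufficiently long blocks of $u$ have a uniquely determined $\lambda$-adic column alignment inside $u$. Since $\widetilde{u}[r]$ is the column realizer encoded by the alignment at position $r$, any sufficiently long block $w_1$ in $u$ determines the matching block $w_2$ in $\widetilde{u}$ on the same window. Consequently, occurrences of a block $\bar w = (w_1, w_2)$ in $\bar u$ coincide with occurrences of $w_1$ in $u$ once $w_1$ is long enough, and uniform recurrence of $u$ (from minimality of $(X_\theta, S)$, Proposition~\ref{p:minimality}) transports to $\bar w$; shorter blocks are handled by embedding into longer ones.

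For the height equality, the identity $\{r \geq 1 : \bar u[r] = \bar u[0]\} = S_0(u) \cap S_0(\widetilde{u})$ gives $g_0(\theta) \mid g_0(\Theta)$, hence $h(\theta) \mid h(\Theta)$. For the converse, Proposition~\ref{p:listas}(iii) gives $h(\widetilde{\theta}) = 1$, so all prime factors of $g_0(\widetilde{\theta})$ divide $\lambda$; combined with the structural fact that the coprime-to-$\lambda$ content of $S_0(u)$ comes entirely from the $h(\theta)$-coset of $\Z$ (the rest being encoded by $\lambda$-adic conditions), intersecting with $S_0(\widetilde{u})$ can only refine the $\lambda$-adic constraints and leaves the coprime-to-$\lambda$ part of the gcd unchanged. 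Hence $h(\Theta) \mid h(\theta)$, and equality follows. The main obstacle is (b), whose clean justification requires invoking the recognizability property with care.
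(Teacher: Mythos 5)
Your step (a) is correct and close in spirit to what the paper does, and your lower bound $h(\theta)\mid h(\Theta)$ via $S_0(\Theta)=S_0(\theta)\cap S_0(\widetilde\theta)$ is exactly the paper's Lemma~\ref{le:joining_height}. The two remaining steps, however, each contain a genuine gap. For (b), the claim that a sufficiently long block $w_1$ of $u$ determines the matching block $w_2$ of $\widetilde u$ is false, and the paper's own Example~\ref{ex:ex1} refutes it: there $\theta(a)=ab$, $\theta(b)=ca$, $\theta(c)=ba$, $\mathcal{X}=\{\{a,b\},\{a,c\}\}$, and the letter $a$ lies in \emph{both} elements of $\mathcal{X}$. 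Working with $\Theta^2$ (so that \eqref{su4} holds), one checks that $\Theta^2(a,\{a,b\})$ begins with $(a,\{a,b\})$ and $\Theta^2(a,\{a,c\})$ begins with $(a,\{a,c\})$, while both have first coordinate $\theta^2(a)$. Since both letters $(a,\{a,b\})$ and $(a,\{a,c\})$ occur in the fixed point $\bar u$ (at positions $0$ and $6$), the two windows $\bar u[0,\la^{2k}-1]=\Theta^{2k}(a,\{a,b\})$ and $\bar u[6\la^{2k},7\la^{2k}-1]=\Theta^{2k}(a,\{a,c\})$ are arbitrarily long blocks of $\bar u$ with identical first coordinate $\theta^{2k}(a)$ but second coordinates differing already at index $0$. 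Recognizability only recovers the $\la$-adic skeleton up to the level resolvable by the window, whereas $\widetilde u[r]$ depends on higher-level data precisely because $\mathcal{X}$ need not partition $\A$; so the column realizer is not a local function of $u$. The paper sidesteps this entirely with a two-step composition: since column $j$ of $\theta^k$ realizes the column number, $\widetilde\theta^k(M)_j=M_2$ for \emph{every} $M\in\mathcal{X}$, so one first uses primitivity of $\theta$ to reach a suitable first coordinate and then this synchronization to force the second coordinate, giving \eqref{su3} directly without any recurrence or recognizability argument.

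The upper bound $h(\Theta)\mid h(\theta)$ is also not established. Your "structural fact" that intersecting $S_0(u)$ with $S_0(\widetilde u)$ "can only refine the $\la$-adic constraints" is an assertion, not an argument: the gcd of an intersection of two sets can exceed the lcm of their gcds in ways not controlled by either set separately, and $h(\widetilde\theta)=1$ by itself gives no upper bound on $\gcd\bigl(S_0(u)\cap S_0(\widetilde u)\bigr)$. The paper proves this direction by producing an explicit inclusion $\la^k\cdot S_{j_0'}(\theta)\subset S_{j_0'\la^k+j}(\Theta)$ at a well-chosen position (obtained from the synchronization argument above applied with target letter $(a_0,M_0)$), which yields $\gcd S_{j_0'\la^k+j}(\Theta)\mid\la^k\gcd S_{j_0'}(\theta)$ and hence $h(\Theta)\mid h(\theta)$, using that the height can be computed from $S_m$ for \emph{any} $m$. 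You would need a substitute for this concrete containment; without it, both halves of your argument that require an upper bound (on the language side for primitivity and on the gcd side for the height) are missing their key input.
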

\begin{proof}
We first show that $\Theta$ is primitive.
Let $(a_1,M_1), (a_2, M_2) \in \ov{\mathcal{X}}$. By the
definition of $\widetilde{\theta}$, it follows that there are some $k\in \N, j<\la^k$ such
that $\theta^{k}(\A)_{j} = M_2$. Hence,
$\widetilde{\theta}^{k}(M)_j = M_2$ for every $M \in \mathcal{X}$.
This ensures that there exists $a \in \A$ such that $\theta^k(a)_j = a_2$.
Using the primitivity of $\theta$, choose $k'\in \N$ and $j'<\la^{k'}$ so that $\theta^{k'}(a_1)_{j'} =
a$. We define $M := \widetilde{\theta}^{k'}(M_1)_{j'}$ and
using~\eqref{wzorkoc}, we obtain
\begin{align*}
\Theta^{k+k'}((a_1,M_1))_{j' \la^k + j} &=
\Theta^{k}(\Theta^{k'}((a_1,M_1))_{j'})_{j}\\
        &= \Theta^k((a,M))_j = (a_2,M_2)
  \end{align*}
which shows that $\Theta$ is primitive.

Let us denote by $u$ a fixed point of $\theta$, by $v$ a fixed
point of $\Theta$, where $u[0] = a_0, v[0] = (a_0,M_0)$.
According to~\eqref{defH0}, if we set
  \begin{align*}
    S_k(\theta) = \{r \geq 1: u[k+r] = u[k]\},\end{align*}
then
$$
    h(\theta) = \max\{m\geq 1: (m,\la) = 1, m \mid \gcd S_k\},$$
  for all $k\geq 0$.

We recall also that in the first part of the proof we showed that for
$(a_1,M_1)=(a_2,M_2)=(a_0,M_0)$ (and using~\eqref{wzorkoc}),
we obtain that
  \begin{align*}
    \Theta^{k+k'}((a_0,M_0))_{j' \la^k + j} = (a_0,M_0)
  \end{align*}
  for any $k' \in \N, j'<\la^{k'}$ such that
  $\theta^{k'}(a_0)_{j'} = a$. Since, $u[0,\la^{k'}-1]=\theta^{k'}(a_0)$, the above means that there exists $k \in \N, j<\la^k$ such that for all $j'$ with $u[j'] = a$, we have $v[j'\la^k + j] = (a_0,M_0)$.
  We denote by $j'_0$ the smallest such $j'$.
  Thus, we find
  \begin{align*}
    \la^k \cdot S_{j'_0}(\theta) \subset S_{j'_0 \la^k + j}(\Theta).
  \end{align*}
  This implies that $\gcd S_{j'_0 \la^k + j}(\Theta) \mid \la^k \gcd(S_{j'_0}(\theta))$, and, therefore, $h(\Theta) \mid h(\theta)$.
  Moreover, by Lemma~\ref{le:joining_height} (below),  we obtain that $h(\theta) \mid h(\Theta)$, which completes the proof.\footnote{Alternatively, $h(\theta)|h(\Theta)$ follows from the fact that $(X_{\theta},S)$ is a topological factor of $(X_{\Theta},S)$.}
\end{proof}

\begin{Remark} Note that from the measure-theoretic point of view,
the substitutional
system $(X_{\theta\vee\widetilde\theta},
\mu_{\theta\vee\widetilde\theta},S)$ (which is an extension of
$(X_\theta,\mu_\theta,S)$) is isomorphic to $(X_\theta,\mu_\theta,S)$.
Indeed, by the unique ergodicity of
$(X_{\theta\vee\widetilde\theta},S)$, it represents an ergodic
joining of $(X_\theta,\mu_\theta,S)$
and $(X_{\widetilde\theta},\mu_{\widetilde\theta},S)$ and
the latter system has discrete spectrum contained
in the discrete spectrum of $(X_\theta,\mu_\theta,S)$.
By ergodicity, the only  such a joining
is graphic. We will detail on this soon.\end{Remark}

Hence, using Proposition~\ref{p:prim1}, the above remark and
the ergodic interpretation of the column number, cf.\
Proposition~\ref{p:cextension},
we obtain the following (which is also an immediate consequence of Lemma~\ref{le:joining_height}\footnote{Indeed, as $c(\widetilde{\theta}) = 1$, Lemma~\ref{le:joining_height} shows that $c(\theta) \leq c(\Theta) \leq c(\theta)$.}):
\begin{Cor}\label{c:colnumTh}
For a substitution $\theta:\A\to\A^\lambda$, we have
$c(\Theta)=c(\theta)$.\end{Cor}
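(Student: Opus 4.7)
The plan is to prove the two inequalities $c(\Theta)\geq c(\theta)$ and $c(\Theta)\leq c(\theta)$ directly, exploiting the fact that the projection $(a,M)\mapsto a$ intertwines $\Theta$ with $\theta$, together with the key property of $\widetilde{\theta}$ used in the proof of Proposition~\ref{p:listas}~(i).

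For $c(\Theta)\geq c(\theta)$, I would consider, for arbitrary $k\geq1$ and $0\leq j<\la^k$, the column set $C_{k,j}^{\Theta}:=\{\Theta^k((a,M))_j:(a,M)\in\ov{\mathcal{X}}\}$. By the definition of $\Theta=\theta\vee\widetilde{\theta}$, the first coordinate of $\Theta^k((a,M))_j$ equals $\theta^k(a)_j$, so the projection of $C_{k,j}^{\Theta}$ on the first coordinate equals $\{\theta^k(a)_j:a\in\A\}$ (using $p_{\A}(\ov{\mathcal{X}})=\A$ from~\eqref{s1}, which holds in view of~\eqref{rzut}). Since a projection cannot increase cardinality, $|C_{k,j}^{\Theta}|\geq|\{\theta^k(a)_j:a\in\A\}|\geq c(\theta)$, and taking the infimum over $k,j$ gives $c(\Theta)\geq c(\theta)$.

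For the reverse inequality $c(\Theta)\leq c(\theta)$, I would exhibit a single column of $\Theta^k$ of cardinality $c(\theta)$. Pick any $M\in\mathcal{X}$ and the associated $k_M$, $j_M$ with $M=\{\theta^{k_M}(a)_{j_M}:a\in\A\}$ and $|M|=c(\theta)$. As noticed in the proof of Proposition~\ref{p:listas}~(i), for every $M'\in\mathcal{X}$ one then has $\theta^{k_M}(M')_{j_M}=M$, hence by the inductive definition of $\widetilde{\theta}$ (and iteration) $\widetilde{\theta}^{k_M}(M')_{j_M}=M$ uniformly in $M'\in\mathcal{X}$. Consequently
\[
C_{k_M,j_M}^{\Theta}=\{(\theta^{k_M}(a)_{j_M},\,\widetilde{\theta}^{k_M}(M')_{j_M}):(a,M')\in\ov{\mathcal{X}}\}=\{(b,M):b\in M\},
\]
which has cardinality exactly $|M|=c(\theta)$. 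Therefore $c(\Theta)\leq c(\theta)$, and the two inequalities combine to the claim.

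No step looks to be a serious obstacle; the only point one must be careful about is that the equality $\widetilde{\theta}^{k_M}(M')_{j_M}=M$ for all $M'\in\mathcal{X}$ really does follow from the observation already made in Proposition~\ref{p:listas}, and that $(a,M)\in\ov{\mathcal{X}}$ with first coordinate running over all of $\A$ (which is~\eqref{rzut}) is what lets the projection argument give the column of $\theta^k$ in full, not just a subset.
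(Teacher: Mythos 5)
Your proof is correct and coincides with the combinatorial route the paper itself offers in the footnote attached to Corollary~\ref{c:colnumTh}: there the bounds $\max(c(\theta),c(\widetilde\theta))\leq c(\Theta)\leq c(\theta)\cdot c(\widetilde\theta)$ of Lemma~\ref{le:joining_height} are invoked with $c(\widetilde\theta)=1$, and your two inequalities are precisely the specialization of that lemma's proof (full projection of $\ov{\mathcal{X}}$ onto $\A$ for the lower bound, and a column that simultaneously realizes $c(\theta)$ and synchronizes $\widetilde\theta$ for the upper bound). The paper's main-line justification is instead ergodic (the measure-theoretic isomorphism of $(X_{\Theta},\mu_{\Theta},S)$ with $(X_\theta,\mu_\theta,S)$ combined with Proposition~\ref{p:cextension}), but your argument is complete as it stands.
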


\begin{Remark}\label{r:factor1}
Note also  that $(X_{\widetilde\theta},S)$
is a topological factor of $(X_{\theta\vee\widetilde\theta},S)$
via the map:
$$
(x_n,M_n)_{n\in\Z}\mapsto (M_n)_{n\in\Z}.$$
\end{Remark}

Assume that $M\in\mathcal{X}$. As, for each $j=0,\ldots,\la-1$, the set $\{\theta(a)_j:\:a\in M\}$
has $c(\theta)$ elements, we obtain the following:

\begin{Lemma} \label{l:relbij} Substitution $\theta\vee\widetilde{\theta}$
has the following property: for each $M\in\mathcal{X}$, we have
$(\theta\vee\widetilde{\theta})(\cdot,M)_j$
is bijective for each $j=0,\ldots,\la-1$.\footnote{More precisely,
it is bijective on a set whose cardinality is $c(\theta)$, i.e., a bijection from $M$ to $\widetilde{\theta}(M)_j$.}\end{Lemma}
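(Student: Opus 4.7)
The plan is to reduce the claim to a statement purely about the first coordinate of $\Theta = \theta \vee \widetilde{\theta}$. By the defining formula~\eqref{s3}, for $(a,M) \in \overline{\mathcal{X}}$ and $0 \leq j < \lambda$ we have
\[
\Theta(a,M)_j \;=\; (\theta(a)_j,\,\widetilde{\theta}(M)_j).
\]
When $M \in \mathcal{X}$ is held fixed and $a$ varies over $M$, the second coordinate $\widetilde{\theta}(M)_j$ does not depend on $a$. Hence the question of whether $\Theta(\cdot,M)_j$ is bijective on $M$ reduces to whether the first-coordinate map $a \mapsto \theta(a)_j$ is bijective from $M$ onto its image.

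That image is, by definition, $\theta(M)_j = \{\theta(a)_j : a\in M\}$, which equals $\widetilde{\theta}(M)_j$ per~\eqref{s4}. Thus the map $a \mapsto \theta(a)_j$ is automatically a surjection $M \twoheadrightarrow \widetilde{\theta}(M)_j$. To get bijectivity it suffices to verify that domain and codomain have the same (finite) cardinality: $|M| = c(\theta)$ because $M \in \mathcal{X}$, and the observation already recorded right before~\eqref{s4} gives $|\theta(M)_j| = c(\theta)$ as well. A surjection between two finite sets of the same cardinality is bijective, completing the argument.

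There is no substantial obstacle here — the content of the lemma is essentially unpacking what it means for $M$ to realize the column number, together with the fact (already verified in the paper) that $\widetilde{\theta}$ preserves $\mathcal{X}$ column-wise. If one wanted to be completely explicit about $|\theta(M)_j| = c(\theta)$, one would observe that writing $M = \theta^{k_M}(\A)_{j_M}$ and using the concatenation identity~\eqref{wzorkoc} with $k=1$, $\ell = k_M$, $j' = j_M$ gives $\theta(M)_j = \theta^{k_M+1}(\A)_{j_M\lambda + j}$, a column-set of $\theta^{k_M+1}$, whose cardinality is therefore bounded below by $c(\theta)$; it cannot exceed $|M| = c(\theta)$ since it is the image of $M$ under a function, so equality holds.
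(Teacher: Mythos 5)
Your proof is correct and follows the same route as the paper: the paper's argument is exactly the observation (recorded just before \eqref{s4}) that $|\theta(M)_j|=c(\theta)$ for $M\in\mathcal{X}$, combined with the fact that a surjection between finite sets of equal cardinality is a bijection. Your explicit justification of $|\theta(M)_j|=c(\theta)$ via \eqref{wzorkoc}, identifying $\theta(M)_j$ as a column set of $\theta^{k_M+1}$, is the intended (and correct) way to see that fact.
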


\begin{Remark} In this way, using joinings, we explained one
of the strategies in \cite{Mu} which consists in
representation of each automatic sequence as a
``combination'' of the synchronized and relatively invertible parts.
\end{Remark}

%TODO: find a better alternative titel!!!!!!!!!!!!!!!!!!!!!!!!!!!!!!!!!!!!!
\subsubsection[Description of the subshift given by the new substitution]{Description of the subshift
$(X_{\theta\vee\widetilde\theta},S)$}\label{ss:c}
Each point of the space $X_{\theta\vee\widetilde\theta}$ is
of the form $(x_n,M_n)_{n\in\Z}$, where $\ov{M}:=(M_n)\in
X_{\widetilde\theta}$ with $M_n\in\mathcal{X}$,
and $x=(x_n)\in X_\theta$, so for each $n\in\Z$, $x_n\in M_n\subset\A$.
More than that, like every substitutional system, such a point must
have its (unique) $\la^t$-skeleton structure $(j_t)_{t\geq 1}$
on which we will detail more. First of all, by the definition
of $\theta\vee\widetilde\theta$, $(j_t)$ must be the
$\la^t$-skeleton structure of both $\ov M$ and $x$
(we strongly use here that the skeleton structure is unique;
for that, we need that the substitution $\tilde\theta$
is non-trivial, in other words the argument makes no sense
if $\theta$ is quasi-bijective).

So $(j_t)_{t\geq1}$ is the $\la^t$-skeleton structure
of $\ov M\in X_{\widetilde\theta}$: for each $t\geq1$, we have
$$
\ov{M}[-j_t+s\la^t,-j_t+(s+1)\la^t-1]=\widetilde\theta^t(R_s)$$
for each $s\in\Z$ and some $R_s\in\mathcal{X}$.
Fix $t\geq 1$ and consider $s=0$. We have
$$
\ov{M}[-j_t,-j_t+\la^t-1]=\widetilde\theta^t(R_0)$$
and $\mathcal{X}\ni R_0=\{r_0,\ldots,r_{c-1}\}$, where $c=c(\theta)$
and $r_j\in\A$. Hence
\beq\label{matrix}
\widetilde\theta^t(R_0)=
\begin{array}{lllll}
\theta^t(r_0)_0&\ldots&\theta^t(r_0)_{j_t}&\ldots&
\theta^t(r_0)_{\la^t-1}\\
\theta^t(r_1)_0&\ldots&\theta^t(r_1)_{j_t}&\ldots&
\theta^t(r_1)_{\la^t-1}\\
\ldots&\ldots&\ldots&\ldots&\ldots\\
\theta^t(r_{c-1})_0&\ldots&\theta^t(r_{c-1})_{j_t}&\ldots&
\theta^t(r_{c-1})_{\la^t-1},\end{array}
\eeq
where the columns ``represent'' sets in $\mathcal{X}$. We look at $\widetilde\theta^t(R_0)_{j_t}$ the
$j_t$-th element of $\widetilde\theta^t(R_0)$ which is
``represented'' by the $j_t$-th column of the matrix above and
is equal to $M_0$ (the zero coordinate of $\ov M$). Now, $x_0\in M_0$,
hence $x_0=\theta^t(r_\ell)_{j_t}$ for a unique $0\leq \ell\leq c-1$.
Moreover, $x[-j_t,-j_t+\la^t-1]=\theta^{t}(r_\ell)$. Hence,
unless $j_t=\la^t-1$, $x_0$ determines $x_1$.

Note that, we can reverse this reasoning in the case:
\beq\label{expand}
\min(j_t,\la^t-j_t-1)\to\infty.\eeq
Indeed, in this case, given $\ov{M}$, we can choose
$x_0\in M_0$ arbitrarily, and then successively fill in the
second coordinate by placing there the $\ell$-th row in the
matrix~\eqref{matrix}, where $x_0=\theta^t(r_\ell)_{j_t}$.
This shows that over all $\ov{M}\in X_{\widetilde\theta}$
we have $c$ points $(x,\ov{M})\in X_{\theta\vee\widetilde\theta}$
whenever the $\la^t$-skeleton of $\ov{M}$ satisfies~\eqref{expand}.
What remains are points for which $$
\mbox{either $j_{t+1}=j_t$ or $\la^{t+1}-j_{t+1}-1=\la_t-j_t-1$
for $t\geq T$}.$$
When projected down to the maximal equicontinuous factor,
this condition defines a countable set, in particular, of
(Haar) measure zero. For the unique measure $\mu_{\widetilde\theta}$
we have hence a.e.\ a $c$-point extension. For $\ov{M}$ which do
not satisfy~\eqref{expand}, we need (at most)
two coordinates for $x$ to determine all other,
so the fibers have at most $c^2$ elements.

\begin{Example} Let $\A=\{a,b,c,d\}$.
  We consider the following substitution $\theta: \A \to \A^4$:
  \begin{align*}
    \theta(a) &= adda,\;
    \theta(b) = bccb\\
    \theta(c) &= abbc,\;
    \theta(d) = baad.
  \end{align*}
  It is not hard to see that $\theta$ is pure and $c=2, \mathcal{X} = \{M_0, M_1\}$, where $M_0 = \{a,b\}$ and $M_1 = \{c,d\}$. Moreover,
  \begin{align*}
    \til{\theta}(M_0) &= M_0 M_1 M_1 M_0\\
    \til{\theta}(M_1) &= M_0 M_0 M_0 M_1.
  \end{align*}
Let us now consider the block $\ov{M}^{(0)}=M_0M_0\in \mathcal{X}^2$, which we write as $\ov{M}^{(1)}_{-1}\ov{M}^{(1)}_0$. Acting on it by $\widetilde{\theta}$, we obtain
$$\ov{M}^{(1)}=\widetilde\theta(M_0)\widetilde\theta(M_0)=\ov{M}^{(1)}_{-4}\ldots
  \ov{M}^{(1)}_{-1}\ov{M}^{(1)}_0\ldots\ov{M}^{(1)}_3.$$
By iterating this procedure and passing to the limit, we obtain a two-sided sequence which is a fixed point for $\widetilde\theta$ and which we denote by
$\overline{M}=\til{\theta}^{\infty}(M_0).\til{\theta}^{\infty}(M_0)$, the ``dot'' indicating the zero position of the sequence. A similar procedure can be made on the $\theta$-side, starting with $a.a$, $a.b$, $b.a$ and $b.b$. It is not hard to see that, up to a natural rearrangement of coordinates,
the following four points $(\theta^{\infty}(a).\theta^{\infty}(a),\ov{M})$, $(\theta^{\infty}(a).\theta^{\infty}(b),\ov{M})$, $(\theta^{\infty}(b).\theta^{\infty}(a),\ov{M})$, $(\theta^{\infty}(b).\theta^{\infty}(b),\ov{M})$
are members of $X_{\theta\vee\widetilde\theta}$. It follows that the fiber over $\ov{M}$ has four points.
\end{Example}

\subsection{Toward a skew-product measure-theoretic representation -
making the relative invertibility clearer}\label{s:rename}
In this part we want to rename the alphabet of $\Theta$
to get a new substitution, which makes the
invertible part easier to handle. Namely, our new alphabet
will be the set $\{0,\ldots,c-1\}\times\mathcal{X}$, where $c=c(\theta)$. The only
thing we need is to give a ``good'' identification
of $\{0,\ldots,c-1\}\times M$
with $\{(a,M):\:a\in M\}$ (for $M\in\mathcal{X}$).
We start by giving a classical example.

\begin{Example} [Rudin-Shapiro sequence] \label{ex:ex2}
  We consider the Rudin-Shapiro substitution $\theta$ defined by
$\A=\{a,b,c,d\}$ and  \begin{align*}
    a \mapsto ab, \quad &d \mapsto dc\\
    b \mapsto ac, \quad &c \mapsto db.
  \end{align*}
  We find that $\mathcal{X} = \{ \{a,d\},\{b,c\}\}$ and
  consequently for $\Theta$, we have:
  \begin{align*}
    (a,\{a,d\}) \mapsto (a,\{a,d\})(b,\{b,c\}), \quad &(d,\{a,d\})
    \mapsto (d,\{a,d\})(c,\{b,c\})\\
    (b,\{b,c\}) \mapsto (a,\{a,d\})(c,\{b,c\}), \quad &(c,\{b,c\})
    \mapsto (d,\{a,d\})(b,\{b,c\}).
  \end{align*}
  Let us now fix a partial ordering on $\A$ that is complete
  on every $M \in \mathcal{X}$, e.g., $a < b$, $c < d$. Thus, we can
  identify $(a,\{a,d\}) \cong (0,\{a,d\})$ and $(d,\{a,d\})
  \cong (1,\{a,d\})$ as $a$ is the smallest element of
  $\{a,d\}$ and $d$ is the second smallest element. This gives the
  new substitution $\widetilde{\Theta}: \{0,\ldots,c-1\}
  \times\mathcal{X} \to (\{0,\ldots,c-1\}\times\mathcal{X})^{\la}$:
  which in our example is given by:
  \begin{align*}
    (0,\{a,d\}) \mapsto (0,\{a,d\})(0,\{b,c\}), \quad &(1,\{a,d\})
    \mapsto (1,\{a,d\})(1,\{b,c\})\\
    (0,\{b,c\}) \mapsto (0,\{a,d\})(1,\{b,c\}),
    \quad &(1,\{b,c\}) \mapsto (1,\{a,d\})(0,\{b,c\}).
  \end{align*}

  Note that $\til{\theta}$ is very simple in this example,
  as identifying $\{a,d\}$ with $M_0$ and $\{b,c\}$ with $M_1$,
  gives $\til{\theta}(M_0)=M_0 M_1$ and $\til{\theta}(M_1)=M_0 M_1$,
  so we obtain a periodic sequence.
  We obtain the classical Rudin-Shapiro sequence by considering the
  fixpoint of $\widetilde{\Theta}$ and then applying
  the projection $(i, M) \mapsto i$. (We emphasize that the classical Rudin-Shapiro sequence itself is not a fixpoint of a substitution.)
\end{Example}

We will be dealing with $\theta$ which is not quasi-bijective
(which, via Propositions~\ref{p:listas} and~\ref{p:charbij+qb}, guarantees that $(X_{\widetilde\theta},S)$ ``captures''
the whole discrete spectrum of $(H_\lambda,m_{H_\lambda},R)$).
We repeat the above construction:   fix a partial order on $\A$
that is complete on every $M \in \mathcal{X}$ and identify
$(a,M)$ with $(j,M)$ if the ordering of the elements in $M$ according
to the ordering on $\A$ restricted to $M$ yields $a$ as the
$(j+1)$-th smallest element and rewrite instructions using the alphabet.
This yields the substitution
$$
\widetilde\Theta:\{0,\ldots,c-1\}\times\mathcal{X}\to
(\{0,\ldots,c-1\}\times\mathcal{X})^\lambda.$$
The second coordinate is still $\widetilde{\theta}$ which
corresponds to the synchronizing part and can be defined independently
of the first coordinate.
The first coordinate (which now depends on the second coordinate),
gives the ``invertible part''.

\begin{Lemma} $\widetilde\Theta$ is primitive,
 $h(\widetilde{\Theta}) = h(\theta)$ and
 $c(\widetilde{\Theta})=c(\theta)$.
\end{Lemma}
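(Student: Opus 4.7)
The plan is to observe that $\widetilde\Theta$ is nothing more than $\Theta=\theta\vee\widetilde\theta$ rewritten in a renamed alphabet, so that all three claims reduce immediately to the already-established properties of $\Theta$ in Proposition~\ref{p:prim1} and Corollary~\ref{c:colnumTh}.

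First, I would introduce the relabeling map $\Psi:\ov{\mathcal{X}}\to\{0,\ldots,c-1\}\times\mathcal{X}$, where $c=c(\theta)$, defined by $\Psi(a,M):=(j,M)$ with $j$ the rank of $a$ in $M$ under the fixed complete ordering of $M$. Since the very definition of $\mathcal{X}$ forces $|M|=c$ for every $M\in\mathcal{X}$, the restriction of $\Psi$ to the fibre $\{(a,M):a\in M\}$ is a bijection onto $\{0,\ldots,c-1\}\times\{M\}$, and hence $\Psi$ itself is bijective. Directly from the construction of $\widetilde\Theta$ described in the text, one has the intertwining
\[
\widetilde\Theta(\Psi(a,M))_j=\Psi(\Theta(a,M)_j)
\]
for every $(a,M)\in\ov{\mathcal{X}}$ and every $0\le j<\lambda$, i.e.\ $\widetilde\Theta$ and $\Theta$ are the same substitution expressed in different alphabets.

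Next, I would observe that primitivity, height and column number are each invariant under bijective renaming of the alphabet of a substitution. Primitivity is a purely combinatorial condition on which letters appear in iterated images and is transported by $\Psi$; hence primitivity of $\widetilde\Theta$ follows from that of $\Theta$ established in Proposition~\ref{p:prim1}. The height is read off a fixed point via the sets $S_k$ of indices where a given letter reappears, and $\Psi$ sends fixed points of $\Theta$ to fixed points of $\widetilde\Theta$ with the identical positional pattern of letter equalities, so $h(\widetilde\Theta)=h(\Theta)=h(\theta)$, again by Proposition~\ref{p:prim1}. The column number is defined via cardinalities of column sets of iterates, a quantity that is clearly invariant under alphabet bijections, so $c(\widetilde\Theta)=c(\Theta)=c(\theta)$ by Corollary~\ref{c:colnumTh}.

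I do not anticipate any real obstacle: the only substantive point to verify is that $\Psi$ is a genuine bijection, and this reduces entirely to the uniform cardinality $|M|=c(\theta)$ of the elements of $\mathcal{X}$, which is built into the definition of $\mathcal{X}$. The remainder is a routine transport of structure from $\Theta$ to $\widetilde\Theta$.
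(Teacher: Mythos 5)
Your proposal is correct and follows exactly the paper's own argument: the paper likewise observes that $\widetilde\Theta$ is just $\Theta$ with a renamed alphabet and deduces all three claims immediately from Proposition~\ref{p:prim1} and Corollary~\ref{c:colnumTh}. Your explicit verification that the relabeling is a bijection (via $|M|=c(\theta)$ for each $M\in\mathcal{X}$) and that primitivity, height and column number are invariant under alphabet bijections merely spells out what the paper leaves implicit.
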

\begin{proof}
As $\widetilde\Theta$ is obtained by renaming the alphabet
of $\Theta$, this follows immediately from Proposition~\ref{p:prim1}
and Corollary~\ref{c:colnumTh}.
\end{proof}

\begin{Remark}\label{r:factor2}
A point in the space $X_{\widetilde\Theta}$ is of the form
$(y_n,M_n)_{n\in\Z}$ with $y_n\in\{0,\ldots,c-1\}$ and there
is an equivariant map between $X_{\widetilde\Theta}$
and $X_{\theta\vee\widetilde\theta}$ given by
\beq\label{c-ext}(y_n,M_n)\mapsto (z_n,M_n),\eeq
where $z_n\in M_n$ is the $(y_n+1)$-th letter in the ordering on $M_n$.
Composing this map with the projection on the first coordinate
yields the topological factor $(X_{\theta},S)$.\end{Remark}

The projection of \eqref{c-ext} on the second coordinate yields
the factor $(X_{\widetilde\theta},S)$. According to
Subsection~\ref{ss:c}, we can now represent
$(X_{\widetilde\Theta},\mu_{\widetilde\Theta},S)$
as a skew product over $(X_{\widetilde\theta},
\mu_{\widetilde\theta},S)$ using the following: first
$$
\mbox{identifying $(i_n,M_n)_{n\in\Z}$ with
$((M_n)_{n\in\Z},i_0)\in X_{\widetilde\theta}\times\{0,\ldots,c-1\}$}
$$
(which is possible for all $\ov{M}=(M_n)_{n\in\Z}$ satisfying~\eqref{expand},
cf.\ Subsection~\ref{ss:c}) and then setting
\beq\label{skew}
(\ov{M},i_0)\mapsto (S\ov{M},\sigma_{\ov{M}}(i_0)),
\eeq
where $\sigma_{\ov{M}}$ (in fact, it is $\sigma_{M_0}$)
is a permutation of $\{0,\ldots,c-1\}$
determined by $M_0$: we come back to the matrix~\eqref{matrix}
in which $M_0$ is represented as the $j_t$-th column.
The next column is representing $M_1$, and if we have $z_0$
(which belongs to $M_0$ and corresponds to $i_0$)
in the $i_0$-th row then $z_1$
is just the next element in the same row. Now, the elements in
the $j_t$-th column have their names in $\{0,\ldots,c-1\}$
and this is the same for $(j_t+1)$-st column. Hence $\sigma_{\ov{M}}$
is a permutation of $\{0,\ldots,c-1\}$ sending  an element in the
$j_t$-th column to the neighboring one in the next column.

\begin{Remark}\label{r:firstconcl} As from the measure-theoretic point
of view, $(X_{\widetilde{\Theta}},S)$ is still $(X_\theta,S)$ (and
$(X_{\widetilde\theta},S)$ is the same as $(H_\lambda,R)$),
the above formula~\eqref{skew}
clears up Remark~9.1 in \cite{Qu}, p.~229, about
the form of a cocycle representing $(X_\theta,\mu_\theta, S)$ as a skew product
over $(H_\lambda,m_{H_\lambda},R)$.

Coming back to our main problem, note that $(X_{\widetilde\Theta},S)$ has
$(X_\theta,S)$ as its {\bf topological} factor and it has also
$(X_{\widetilde\theta},S)$ ``representing'' measure-theoretically
$(H_\lambda,R)$ as its {\bf topological} factor, but {\bf still}
we do not
know whether we have decomposition~\eqref{dobre} for each $F\in
C(X_{\widetilde\Theta})$. To assure it, we will need another
extension.\end{Remark}

At the end of this section we want to discuss one particular ordering that has useful properties.

We start by taking an arbitrary complete order $<_{M_0}$ on $M_0$ such that $a_0$ is the minimum. By Proposition~\ref{p:listas}, $\widetilde{\theta}$ is synchronizing and primitive, thus we find $k_0',j_0'$ such that $\widetilde{\theta}^{k_0'}(M)_{j_0'} = M_0$ for all $M \in \mathcal{X}$.
As $\theta^{k_0'}_{j_0'}$ permutes the elements of $M_0$, we find by iterating $\theta^{k_0'}_{j_0'}$, some $k_0, j_0<\la^{k_0}$ such that $\theta^{k_0}(a)_{j_0} = a$ for all $a\in M_0$ and $\widetilde{\theta}^{k_0}(M)_{j_0} = M_0$ for all $M \in \mathcal{X}$.

Using~\eqref{rzut}, we extend now the ordering on $M_0$ to a partial (but not complete in general, as below, $a\neq b$ remain incomparable if $\theta^{k_0}(a)_{j_0}=\theta^{k_0}(b)_{j_0)}$) ordering on $\A$ by $$a<b \;\text{ if and only if }\; \theta^{k_0}(a)_{j_0} <_{M_0} \theta^{k_0}(b)_{j_0}.$$
This gives a complete ordering on any $M \in \mathcal{X}$ and, thus, allows us to define $\widetilde{\Theta}$.
With this ordering we obtain directly that for all $M \in \mathcal{X}$ and  $i=0,1,\ldots,c-1$, we have
\begin{align}\label{eq:synch}
  \til{\Theta}^{k_0}(i,M)_{j_0} = (i,M_0)
\end{align}
which will be useful later.

\begin{Remark}\label{r:wzorekprime} As the order on each $M\in\mathcal{X}$ is fixed, we have $\widetilde{\Theta}^k=\widetilde{\Theta^k}$ (cf.\ footnote~\ref{f:wzorek}).\end{Remark}

\subsection{Column number, height and pure base -- revisited}\label{s:chpbrevisited}
In this part we come back to connections between the column number and the height of primitive substitutions.
We start with simple observations about substitution joinings.
\begin{Lemma}\label{le:joining_height}
  Let $\theta:\A\to\A^\lambda$ and $\zeta:\B\to\B^\lambda$ be substitutions of length $\la$ fulfilling \eqref{su3}, \eqref{su4},~\eqref{su6} and~\eqref{su7}, but are not necessarily pure. For some $\mathcal{A}\subset\A\times\B$, let $\Sigma = \theta \vee \zeta:\mathcal{A}\to\mathcal{A}^\lambda$ be a substitutional joining (in particular, we assume that $\Sigma$ is primitive). Then,
  \begin{align}\label{eq:joining_height1}
    {\rm lcm} (h(\theta),h(\zeta)) \mid h(\Sigma),
  \end{align}
  which also yields a lower bound for $h(\Sigma)$. Furthermore,
  \begin{align*}
    \max(c(\theta),c(\zeta)) \leq c(\Sigma) \leq c(\theta) \cdot c(\zeta),
  \end{align*}
  which, in particular, yields
  lower and upper bounds for the column number of $\Sigma$.
\end{Lemma}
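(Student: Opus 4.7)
The plan is to exploit the coordinate-wise action of $\Sigma = \theta \vee \zeta$: by construction and an immediate induction, $\Sigma^k((a,b))_j = (\theta^k(a)_j, \zeta^k(b)_j)$ for every $k \geq 1$ and $j < \lambda^k$. All three inequalities will follow from this identity combined with the surjectivity assumptions $p_\A(\mathcal{A}) = \A$ and $p_\B(\mathcal{A}) = \B$ built into~\eqref{s1}.

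For the height bound, I will fix a fixed point $w = ((a_n, b_n))_{n\geq 0}$ of some iterate of $\Sigma$; its two coordinate projections are then fixed points of the corresponding iterates of $\theta$ and $\zeta$. The coordinate-wise description forces
\[
S_0(\Sigma) \;=\; S_0(\theta) \cap S_0(\zeta) \;\subseteq\; S_0(\theta),
\]
so $\gcd S_0(\theta)$ divides $\gcd S_0(\Sigma)$. In particular $h(\theta) \mid g_0(\theta) \mid g_0(\Sigma)$, and since $h(\theta)$ is coprime to $\lambda$, maximality yields $h(\theta) \mid h(\Sigma)$. The symmetric argument gives $h(\zeta) \mid h(\Sigma)$, whence $\operatorname{lcm}(h(\theta), h(\zeta)) \mid h(\Sigma)$.

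For the column number, the lower bound is immediate: since $p_\A(\mathcal{A}) = \A$, the image $\{\Sigma^k((a,b))_j : (a,b) \in \mathcal{A}\}$ projects onto $\{\theta^k(a)_j : a \in \A\}$, a set with at least $c(\theta)$ elements for every $(k, j)$; the symmetric argument gives $c(\Sigma) \geq \max(c(\theta), c(\zeta))$. The main obstacle will be the upper bound $c(\Sigma) \leq c(\theta) \cdot c(\zeta)$, because one must exhibit a single pair $(k, j)$ at which $\theta$ and $\zeta$ simultaneously attain their minimal column. My plan is to pick $(k_1, j_1)$ realising $c(\theta)$ and $(k_2, j_2)$ realising $c(\zeta)$, and then to work at the iterate $k := k_1 + k_2$ and the mixed-base position $j := j_1 \lambda^{k_2} + j_2 < \lambda^k$. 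Two applications of~\eqref{wzorkoc} give
\[
\theta^k(a)_j = \theta^{k_2}(\theta^{k_1}(a)_{j_1})_{j_2}, \qquad \zeta^k(b)_j = \zeta^{k_2}(\zeta^{k_1}(b)_{j_1})_{j_2},
\]
so the first expression takes at most $c(\theta)$ values (its inner argument already does), and the second takes at most $c(\zeta)$ values, because the bound $|\{\zeta^{k_2}(y)_{j_2} : y \in \B\}| = c(\zeta)$ controls the image even when $y$ is restricted to a subset of $\B$. Multiplying the two bounds yields $c(\Sigma) \leq c(\theta) \cdot c(\zeta)$; the splitting trick succeeds precisely because $\theta$ and $\zeta$ share the common length $\lambda$, so a single index $j$ written in base $\lambda$ can drive both substitutions into their minimal-column positions at the same time.
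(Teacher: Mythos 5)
Your proposal is correct and follows essentially the same route as the paper: the height divisibility via $S_0(\Sigma)=S_0(\theta)\cap S_0(\zeta)$, the lower bound on $c(\Sigma)$ from fullness of the projections, and the upper bound by composing the two column-realising positions at the index $j_1\lambda^{k_2}+j_2$ using~\eqref{wzorkoc}. No substantive differences.
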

\begin{proof}
  One sees directly that $S_k(\Sigma) = S_k(\theta) \cap S_k(\zeta)$ and, therefore, $g_k(\theta) \mid g_k(\Sigma)$ and $g_k(\zeta) \mid g_k(\Sigma)$ which implies \eqref{eq:joining_height1}.\footnote{One can also see this result dynamically, as both $(X_\theta,S)$ and $(X_\zeta,S)$ are topological factors of $(X_\Sigma,S)$.}

  As the projections of $\mathcal{A}$ on $\A$ and $\B$ are full,
  the lower bound for $c(\Sigma)$ is obvious.
  Furthermore, if $|\theta^{k}(\A)_j|=c(\theta)$, $|\zeta^{k'}(\B)_{j'}|=c(\zeta)$ then (using~\eqref{wzorkoc}), we have
  \begin{align*}
    \Sigma^{k'+k}(\mathcal{A})_{j\la^{k'}+j'} &\subseteq \Sigma^{k'+k}(\A \times \B)_{j \lambda^{k'} + j'} = \Sigma^{k'}(\Sigma^{k}(\A \times \B)_j)_{j'}\\
    &\subseteq \Sigma^{k'}(\theta^k(\A)_j \times \B)_{j'} \subseteq \theta^{k'}(\theta^k(\A)_j)_{j'} \times \zeta^{k'}(\B)_{j'}.
  \end{align*}
  This shows directly that $|\Sigma^{k'+k}(\mathcal{A})_{j\la^{k'}+j'}| \leq c(\theta) \cdot c(\zeta)$ and completes the proof.
\end{proof}

Let us now pass to the proof of Lemma~\ref{le:joining_height0}:

{\bf Proof of Lemma~\ref{le:joining_height0}} (We assume that the ordering on $\A$ is as the one described at the end of Subsection~\ref{s:rename}.)
We have already seen that $c := c(\theta) = c(\widetilde{\Theta})$ and $h := h(\theta) = h(\widetilde{\Theta})$, so we consider without loss of generality the substitution $\widetilde{\Theta} : (\{0,\ldots,c-1\} \times \mathcal{X}) \to (\{0,\ldots,c-1\} \times \mathcal{X})^{\la}$.
It follows directly from the basic properties of the height that we discussed at the beginning of the paper that there exists a 1-coding $f: \mathcal{A} := (\{0,\ldots,c-1\} \times \mathcal{X}) \to \{0,\ldots,h-1\}$ such that the fixed point $u = \widetilde{\Theta}(u)$ is mapped to the periodic sequence $$01\ldots (h-1)01\ldots(h-1)\ldots.$$
It follows by a simple computation (cf.~\eqref{wzorkoc}) that
\begin{align*}
  u[j' \lambda^k+j] = \widetilde{\Theta}^k(u[j'])_{j}
\end{align*}
whenever $j'\geq0$ and $j<\la^k$.
As $f(u[\ell]) \equiv \ell \bmod h$ for each $\ell\geq0$, it follows that (letting $j'$ be so that $(i,M)=u[j']$ and $\ell=j'\la^k+j$)
\begin{align*}
  f(\widetilde{\Theta}^k(i,M)_j) = f(\widetilde\Theta^k(u[j'])_j)=
  f(u[j'\la^k+j])\equiv j'\la^k+j \bmod h.\end{align*}
By the same token, $f(i,M)\equiv j' \bmod h$, so finally
\beq\label{rowno1}
 f(\widetilde{\Theta}^k(i,M)_j)\equiv f(i,M) \la^k + j \bmod h
\eeq
for each $k\geq0$ and $j<\la^k$.

Furthermore, we know by the construction of $\widetilde{\Theta}$ that there exist $k_0, j_0$ such that $\widetilde{\Theta}^{k_0}(i,M)_{j_0} = (i,M_0)$ for every $i \in \{0,\ldots,c-1\}$ and $M \in \mathcal{X}$, as noticed in~\eqref{eq:synch}.
Thus, in view of~\eqref{rowno1} (for $j=j_0$ and $k=k_0$), for all $i \in \{0,\ldots,c-1\}$ and $M \in \mathcal{X}$, we obtain that
\begin{align*}
  (f(i,M) \la^{k_0} + j_0 \bmod h) = f(\widetilde{\Theta}^{k_0}(i,M)_{j_0}) = f(i,M_0).
\end{align*}
This implies that $f(i,M)$ does not depend on $M$ (as $\lambda$ is a multiplicatively invertible element in the ring $\Z/h\Z$), so  $f(i,M)=f(i,M_0)$ and $f$ only depends on the first coordinate. Therefore, we denote $f'(i) := f(i, M_0)$.

As the second coordinate in $\widetilde\Theta$ equals $\widetilde\theta$ is independent of the first coordinate and $\til{\Theta}(.,M)_j$ is a bijection from $\{0,\ldots,c-1\}$ to itself, we obtain the following formula for the incidence matrix $M(\til{\Theta})$,
\beq\label{niezalezy}
  \sum_{i'<c}  M(\til{\Theta})_{(i,M),(i',M')} = M(\til{\theta})_{M,M'}.
\eeq
We now claim that
\beq\label{claim10}
\delta_{(i,M)} = \delta_M \cdot \frac{1}{c},\eeq
i.e.\ the density of $(i,M)$ for $\til{\Theta}$ is equal to $e_{i,M}:=\frac1c\delta_M$. Indeed, using the definition of $e_{i,M}$, \eqref{niezalezy} and~\eqref{wzor100} (for $\widetilde\theta$), we obtain
\begin{align*}
  \sum_{(i',M') \in \{0,\ldots,c-1\} \times \mathcal{X}} e_{(i',M')} \cdot M(\til{\Theta})_{(i,M),(i',M')} &= \sum_{M' \in \mathcal{X}} \frac{1}{c} \delta_{M'} \cdot \sum_{i'<c} M(\til{\Theta})_{(i,M),(i',M')} \\
  &= \sum_{M' \in \mathcal{X}} \frac{1}{c} \delta_{M'} \cdot M(\til{\theta})_{M,M'}\\
  &= \la \cdot \frac{1}{c} \delta_M=\la\cdot e_{i,M}.
\end{align*}
But (by the Perron-Frobenius theorem) there is a unique solution to the above, given by $\delta_{i,M}$, so $e_{i,M}=\delta_{i,M}$ and the claim~\eqref{claim10} follows.

Clearly, each letter $j\in\{0,\ldots,h-1\}$ appears in $01\ldots(h-1)01\ldots$ with density $\frac{1}{h}$ and, recall that, this periodic sequence is the image of $u$ under $f$. Using this and~\eqref{claim10}, we obtain
\begin{align}\label{eq:c_h}
  1/h = \sum_{(i,M): f(i,M) = j} \delta_{(i,M)} = \sum_{i: f'(i) = j} \sum_{M} \frac{1}{c} \delta_M =  \frac{1}{c}|\{i<c: f'(i) =j\}|.
\end{align}

Equation~\eqref{eq:c_h} shows directly that $h | c$ and $\#\{i<c: f'(i) = j\}$ is constant and equals $c/h$.
It remains to show that the column number of the pure base of $\theta$ equals $c/h$. For this aim, let us first
notice that we can also view $f$ as well defined on $\mathbb{A}$ and that for all $M \in \mathcal{X}$ and $j<h$, we have
\beq\label{liczba}\#\{a \in M: f(a) = j\} = c/h.\eeq

Recall also the construction of the pure basis $\theta^{(h)}$ of~$\theta$. It is a
substitution defined  over the alphabet $\mathbb{A}^{(h)}$ which is the set $\{u[mh(\theta),mh(\theta)+h-1]:\:m\geq0\}$ (which is of course finite). Then, we  define $\theta^{(h)}:\mathbb{A}^{(h)}\to(\mathbb{A}^{(h)})^{\la}$ by setting
\begin{align*}
\theta^{(h)}(w)_j=\theta(w)[j h(\theta),(j+1) h(\theta)-1]
\end{align*}
for $j=0,\ldots,\la-1$.

If $w=(w_0,\ldots,w_{h-1}) \in \mathbb{A}^{(h)}$, then there exists some $m$ such that $a_i = u[mh + i]$ for $i=0,\ldots,h-1$.
It follows that
\beq\label{wzora0}
f(w_i)=f(u[mh+i]) \equiv mh+i \equiv i \mod h\eeq
which gives $f(w_i) = i$.
For each $k\geq1$ and $j<\la^k$, we have
\beq\label{wzora1}
  (\theta^{(h)})^k(w)_j = \theta^k(w)[j h(\theta), (j+1) h(\theta)-1].
\eeq
Choose $k>1$ such that $\la^k \equiv 1 \bmod h$ and $\la^k > (j_0+1) h$. Now, $j_0\frac{\la^k-1}h+j_0<\la^{k+k_0}$.
In view of~\eqref{wzora1} (for $k+k_0$ and $j_0(\la^k-1)/h+j_0$) and then~\eqref{wzorkoc} (which we may use as $(j_0+1)(h-1)<\la^k$), we obtain that
\begin{align*}
  (\theta^{(h)})^{k+k_0}&(w_0,\ldots,w_{h-1})_{j_0 (\la^k-1)/h +j_0}\\
   &= \theta^{k+k_0}(w_0,\ldots,w_{h-1})[j_0 \la^k + j_0 (h-1), j_0 \la^k + (j_0+1)(h-1)] \\
   &=  \theta^k(\theta^{k_0}(w_0)_{j_0})[j_0(h-1),(j_0+1)(h-1)].
\end{align*}
It follows from this calculation that the column number of $\theta^{(h)}$ is not larger than the number of $\theta^{k_0}(a_0)_{j_0}$, when we run over all possible $(w_0,\ldots,w_{h-1})\in\A^{(h)}$. But,
as before (cf.~\eqref{rowno1}), $f(\theta^k(a)_j)=f(a)\la^k+j\; {\rm mod}~h$ (for each $a\in\A$). It follows that $f(\theta^{k_0}(w_0)_{j_0}) \equiv j_0 \bmod h$ (as $f(w_0)=0$, see~\eqref{wzora0}).
Moreover, by~\eqref{eq:synch}, $\theta^{k_0}(a)_{j_0} \in M_0$ (for each $a\in\A$). This shows that $c(\theta^{(h)})$ is not larger than the number of $b\in M_0$ such that $f(b)\equiv j_0$~mod~$h$ and hence, in view of~\eqref{liczba}, $c(\theta^{(h)})\leq c(\theta)/h(\theta)$.

Next, we want to give a lower bound for $c(\theta^{(h)})$.
We fix any $k > 0$ and $j<\la^k$. Denote $i = \lfloor \frac{jh}{\la^k} \rfloor$. In view of~\eqref{hei2}, it follows that we have for any $(w_0,\ldots,w_{h-1}) \in \mathbb{A}^{(h)}$,
\begin{align*}
  (\theta^{(h)})^k(w_0,\ldots,w_{h-1})_j = (v_0,\ldots,v_{h-1}),
\end{align*}
where
\begin{align*}
  v_0 = \theta^{k}(w_i)_{jh - i \la^k}.
\end{align*}
Moreover, $f(w_i)=i$ (see~\eqref{wzora0}).
Thus, we find
\begin{align*}
  \#(\theta^{(h)})^k(\mathbb{A}^{(h)})_j \geq \# \theta^{k}(\mathbb{A}_i)_{jh - i\la^k},
\end{align*}
where $\mathbb{A}_i := \{a \in \mathbb{A}: f(a) = i\}$. Finally, we recall the relative bijectivity: for each $M\in\mathcal{X}$ and any different $a,a'\in M$, we have
$\theta^k(a)_{j'} \neq \theta^k(a')_{j'}$ for each $j'<\la^k$, see the last phrase before Lemma~\ref{l:relbij}.
This shows that
\begin{align*}
  c(\theta^{(h)}) \geq \#\{a \in M_0: f(a) = i\} = \frac{c(\theta)}{h(\theta)},
\end{align*}
which completes the proof.
\bez

\begin{Cor} The dynamical system $(X_\theta,\mu_\theta,S)$
 given by (primitive) $\theta$ has purely discrete spectrum if and only if $h(\theta) = c(\theta)$.
\end{Cor}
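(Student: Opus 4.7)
The plan is to combine Lemma~\ref{le:joining_height0} with Proposition~\ref{p:cextension} to handle the easy direction, and with Theorem~\ref{t:essc1} (Host--Parreau) to handle the hard one, avoiding any deeper structural analysis of the spectrum itself. First I would note that Lemma~\ref{le:joining_height0} already forces $h(\theta)\leq c(\theta)$, so only the two cases $c(\theta)=h(\theta)$ and $c(\theta)>h(\theta)$ can occur; I just need to show the first case yields discrete spectrum and the second does not.

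For the ``if'' direction, assume $c(\theta)=h(\theta)$. Then Lemma~\ref{le:joining_height0} gives $c(\theta^{(h)})=1$. Applying Proposition~\ref{p:cextension} to the pure substitution $\theta^{(h)}$ (whose height is $1$) realises $(X_{\theta^{(h)}},\mu_{\theta^{(h)}},S)$ as a $1$-point extension of the $\la$-odometer, hence as $(H_\la,m_{H_\la},R)$ itself. As recalled in Section~\ref{s:subst}, $(X_\theta,\mu_\theta,S)$ is isomorphic to the $h(\theta)$-discrete suspension of its pure base, so it is isomorphic to the $h(\theta)$-discrete suspension of $(H_\la,R)$. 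Since $(h(\theta),\la)=1$ by definition of the height, \eqref{suspodom} identifies this suspension with $(H_\la\times\Z/h\Z,R\times \tau_{h(\theta)})$, which is a rotation on a compact abelian group and therefore has purely discrete spectrum.

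For the ``only if'' direction, I would argue by contraposition: assume $c(\theta)>h(\theta)$. Theorem~\ref{t:essc1} then asserts that the essential centralizer $EC(S)$ of the (ergodic, aperiodic) system $(X_\theta,\mu_\theta,S)$ is finite. On the other hand, if $(X_\theta,\mu_\theta,S)$ had purely discrete spectrum, then by the Halmos--von~Neumann theorem it would be isomorphic to an ergodic rotation $R_{g_0}$ on some compact abelian metric group $G$; all rotations $\{R_g:g\in G\}$ lie in $C(R_{g_0})$, and modding out by the countable subgroup $\{R_{g_0}^n\}$ yields an uncountable quotient $EC(R_{g_0})$. This contradicts the finiteness of $EC(S)$, so $(X_\theta,\mu_\theta,S)$ cannot have purely discrete spectrum.

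The main obstacle is really just bookkeeping the two sides: making sure I correctly invoke Lemma~\ref{le:joining_height0} to transfer $c=h$ into $c(\theta^{(h)})=1$ for the pure base, and carefully applying~\eqref{suspodom} to identify the suspension of the odometer. The genuinely nontrivial inputs (the equality $c(\theta)=h(\theta)\cdot c(\theta^{(h)})$ and the Host--Parreau dichotomy for $EC(S)$) are already established in the paper, so no further work is needed beyond assembling them.
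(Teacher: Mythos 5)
Your argument is correct, and all the ingredients you invoke are indeed established in the paper. The paper itself states this corollary without proof, immediately after Lemma~\ref{le:joining_height0}; the intended derivation is presumably the one you give for the ``if'' direction ($c(\theta)=h(\theta)$ forces $c(\theta^{(h)})=1$, so the pure base is, by Proposition~\ref{p:cextension}, measure-theoretically the odometer, and the $h$-discrete suspension is the group rotation $H_\la\times\Z/h\Z$ via~\eqref{suspodom}, using $(h(\theta),\la)=1$), combined for the converse with the fact that a pure substitution with column number $>1$ is a nontrivial finite extension of its Kronecker factor $H_\la$ and hence cannot have discrete spectrum (this last step needs Dekking's description of the eigenvalues, implicit in the paper's discussion of the maximal equicontinuous factor). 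Your converse instead routes through Theorem~\ref{t:essc1}: if $c(\theta)>h(\theta)$ then $EC(S)$ is finite, whereas an aperiodic ergodic system with discrete spectrum is, by Halmos--von~Neumann, a rotation on an infinite (hence uncountable) compact metric abelian group $G$, whose essential centralizer contains the uncountable quotient of $G$ by the countable subgroup $\{g_0^n\}$ --- exactly the observation recorded in Remark~\ref{r:unc}. This is a perfectly valid alternative that avoids any appeal to the precise eigenvalue group of the substitution, at the cost of importing the Host--Parreau theorem, which is a substantially heavier input; both routes are legitimate given what the paper has already established.
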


\subsection{Toward a group extension}\label{sec:group}
We will now aim at an extension  of the  substitution $\widetilde\Theta$ to a substitution
$\widehat\Theta$ (of length $\la$) over the alphabet
$G \times \mathcal{X}$, where $G$ is a subgroup of the group $\mathcal{S}_c$ of bijections on $\{0,\ldots,c-1\}$.
We would like to note that in case of bijective substitutions a  similar idea was
used in~\cite{Qu} to study the spectrum of the associated system.

Recall that for each $j=0,\ldots,\la-1$ and $M\in\mathcal{X}$,
$\widetilde{\Theta}(\cdot,M)_j$ is bijective in view of Lemma~\ref{l:relbij}
and the definition of $\widetilde\Theta$.
We define the corresponding permutation $\sigma_{M,j} \in \mathcal{S}_c$ by $\sigma_{M,j}(m) = n$ iff $\widetilde{\Theta}(n,M)_j =
(m, \widetilde{\theta}(M)_j)$. This allows us to define the aforementioned group $G$ as the group generated by the $\sigma_{M,j}$, i.e.
\begin{align}
  G := <\sigma_{M,j}: M \in \mathcal{X}, j \in \{0,\ldots,\la-1\}>.
\end{align}

Finally, we define $\widehat{\Theta}: G \times \mathcal{X} \to (G \times \mathcal{X})^{\la}$ by setting
\begin{align}\label{dejFINAL0}
  \widehat{\Theta}(\sigma,M)_j := (\sigma \circ \sigma_{M,j}, \widetilde{\theta}(M)_j),
\end{align}
for $j = 0,\ldots,\la-1$.

Repeating the same with $\theta$ replaced with $\theta^k$ (cf.~Remark~\ref{r:wzorekprime}),  we define the permutations $\sigma^{(k)}_{M,j}$ of $\{0,\ldots,c-1\}$ so that
\begin{align}\label{eq:connection}
  \sigma^{(k)}_{M,j}(m) = n \text{ if and only if } \widetilde{\Theta}^{k}(n,M)_j = (m, \til{\theta}^{k}(M)_j).
\end{align}
This is a very important formula, as it highlights the connection between $\widetilde{\Theta}$ and $\widehat{\Theta}$.
Furthermore,
analogously to~\eqref{dejFINAL0}, we find that
\begin{align}\label{wzornak}
  \widehat{\Theta}^{k}(\sigma, M)_j = (\sigma \circ \sigma^{(k)}_{M,j}, \til{\theta}^k(M)_j),
\end{align}
for $j=0,\ldots,\la^k-1$.

It follows directly that $\sigma^{(1)}_{M,j} = \sigma_{M,j}$ and from~\eqref{wzorkoc} (applied to $\widehat\Theta$) that
\begin{align}\label{coc11}
  \sigma^{(k_1+k_2)}_{M, j_1 \la^{k_2} + j_2} = \sigma^{(k_1)}_{M,j_1} \circ \sigma^{(k_2)}_{\til{\theta}^{k_1}(M)_{j_1}, j_2}
\end{align}
for $j_1<\la^{k_1}$ and $j_2<\la^{k_2}$.

If $\widetilde\theta(M_0)_0=M_0$, then clearly $\widetilde\theta^k(M_0)_0=M_0$ and applying~\eqref{coc11} for $(\sigma,M_0)$, with $j_1=j_2=0$, we obtain $\sigma_{M_0,0}^{(k)}=\left(\sigma_{M_0,0}\right)^k$.
Hence,
by possibly passing to an iterate of $\widehat{\Theta}$, we can assume that
\begin{align}\label{eq:hat_prolongable}
  \widehat{\Theta}(\sigma, M_0)_0 = (\sigma, M_0)
\end{align}
for each $\sigma\in G$.

We recall~\eqref{eq:synch}, which shows the existence of some $k_0 \in \N$ and $j_0 < \la^{k_0}$ such that $\widetilde{\Theta}^{k_0}(i,M)_{j_0} = (i,M_0)$ for all $i<c$ and $M \in \mathcal{X}$. This gives directly, by~\eqref{eq:connection}, that $\sigma_{M,j_0}^{(k_0)} = id$, or in other words
\begin{align}\label{eq:hat_synchronizing}
  \widehat{\Theta}^{k_0}(\sigma, M)_{j_0} = (\sigma, M_0)
\end{align}
holds for all $\sigma \in G, M \in \mathcal{X}$.

The next few lemmas will be used to show that $\widehat{\Theta}$ is primitive.
We start by giving some kind of a dual statement to~\eqref{eq:hat_synchronizing}.
\begin{Lemma}\label{l:kaem}
    For every $M \in \mathcal{X}$ there exist $k_M \in \N$ and $j_M<\la^{k_M}$ such that $\widehat{\Theta}^{k_M}(\sigma,M_0)_{j_M} = (\sigma,M)$
    for each $\sigma\in G$.
\end{Lemma}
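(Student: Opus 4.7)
The plan is to produce, for each $M\in\mathcal{X}$, a ``path'' from $M_0$ to $M$ in the substitutional structure of $\widetilde{\theta}$ whose accumulated permutation cocycle value in $G$ is trivial. Indeed, in view of formula~\eqref{wzornak}, the required equality $\widehat{\Theta}^{k_M}(\sigma,M_0)_{j_M}=(\sigma,M)$ for all $\sigma\in G$ splits into the two separate conditions $\widetilde{\theta}^{k_M}(M_0)_{j_M}=M$ and $\sigma^{(k_M)}_{M_0,j_M}=\mathrm{id}$. The existence of $(k,j)$ satisfying the first condition follows immediately from the primitivity of $\widetilde{\theta}$ (Proposition~\ref{p:listas}(ii)); the real work is to modify such a pair so as to kill the $G$-factor attached to it.

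First I would choose $k_1$ and $j_1<\la^{k_1}$ with $\widetilde{\theta}^{k_1}(M_0)_{j_1}=M$ and set $\pi:=\sigma^{(k_1)}_{M_0,j_1}\in G$. Splicing this with the synchronizing return to $M_0$ furnished by~\eqref{eq:hat_synchronizing}, the cocycle identity~\eqref{coc11} yields a closed ``loop'' at $M_0$ of length $K:=k_1+k_0$ at position $J:=j_1\la^{k_0}+j_0$ whose cocycle value is $\pi\circ\mathrm{id}=\pi$. The key trick is to iterate this loop: by an easy induction on $m\geq 1$, repeated application of~\eqref{coc11} (using that the loop returns to $M_0$, so the second factor in~\eqref{coc11} is again the very same loop) produces some $j^{(m)}<\la^{mK}$ with $\widetilde{\theta}^{mK}(M_0)_{j^{(m)}}=M_0$ and $\sigma^{(mK)}_{M_0,j^{(m)}}=\pi^m$.

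Finally, letting $n$ be the (finite) order of $\pi$ in $G$ and taking $m:=n-1$, one last application of~\eqref{coc11} prepending this $\pi^{n-1}$-loop at $M_0$ to the original path $M_0\to M$ delivers
\[
k_M:=(n-1)K+k_1,\qquad j_M:=j^{(n-1)}\la^{k_1}+j_1,
\]
with cocycle value $\pi^{n-1}\circ\pi=\pi^n=\mathrm{id}$ and endpoint $\widetilde{\theta}^{k_1}(M_0)_{j_1}=M$; the claim then follows from~\eqref{wzornak}. The only obstacle is essentially conceptual: to see that~\eqref{coc11} combined with the finiteness of $G$ reduces the statement to the elementary observation that any group element is cancelled by raising it to its order; once the loop at $M_0$ with cocycle $\pi$ is identified, the bookkeeping of the indices $j^{(m)}$ is routine.
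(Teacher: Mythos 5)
Your proposal is correct and follows essentially the same route as the paper: pick a path $M_0\to M$ realized by $(k_1,j_1)$ with cocycle value $\pi=\sigma^{(k_1)}_{M_0,j_1}$, close it into a loop at $M_0$ using the synchronizing pair $(k_0,j_0)$ from~\eqref{eq:hat_synchronizing}, iterate the loop ${\rm ord}(\pi)-1$ times, and append the path once more so that the accumulated cocycle is $\pi^{{\rm ord}(\pi)}=\mathrm{id}$. The only difference is presentational — you phrase the bookkeeping via the cocycle identity~\eqref{coc11} while the paper works directly with $\widehat{\Theta}^k$ and~\eqref{wzorkoc} — and your index formulas for $k_M$ and $j_M$ match the paper's.
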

\begin{proof}
We fix $M \in \mathcal{X}$. As $\tilde{\theta}$ is primitive, we can find $k \in \N,j<\la^k$ such that $\til{\theta}^{k}(M_0)_j = M$, which gives
$\widehat{\Theta}^{k}(\sigma,M_0)_j = (\sigma \circ \sigma^{(k)}_{M,j},M)$ for all $\sigma \in G$.
In view of~\eqref{wzorkoc}, we obtain that
  \begin{align}\label{eq:returnToM}
  \begin{split}
    \widehat{\Theta}^{k_0 + k}(\sigma,M_0)_{j \la^{k_0} + j_0} = \widehat{\Theta}^{k_0}(\widehat{\Theta}^k(\sigma,M_0)_j)_{j_0}\\ = \widehat{\Theta}^{k_0}(\sigma \circ \sigma^{(k)}_{M,j}, M)_{j_0} = (\sigma \circ \sigma^{(k)}_{M,j}, M_0).
  \end{split}
  \end{align}
  Set $\ell :={\rm ord}(\sigma^{(k)}_{M,j})$. Now, by iterating~\eqref{eq:returnToM}, using~\eqref{wzorkoc}, we obtain that
  \begin{align*}
    \widehat{\Theta}^{k'}(\sigma,M_0)_{j'} = (\sigma \circ (\sigma^{(k)}_{M,j})^{\ell-1}, M_0),
  \end{align*}
  where $k' = (\ell-1) (k_0+k)$ and $j' = \la^{(\ell-2)(k_0+k)}(j \la^{k_0} + j_0) + \ldots + \la^{k_0+k}(j \la^{k_0} + j_0) + j\la^{k_0} + j_0$.
  It follows that
  \begin{align*}
    \widehat{\Theta}^{k + k'}(\sigma,M_0)_{j + \la^{k} j'} &= \widehat{\Theta}^k(\widehat{\Theta}^{k'}(\sigma,M_0)_{j'})_{j} = \widehat{\Theta}^k(\sigma \circ (\sigma^{(k)}_{M,j})^{\ell-1},M_0)_{j}\\
    &= (\sigma \circ (\sigma^{(k)}_{M,j})^{\ell}, M) = (\sigma,M)
  \end{align*}
  which completes the proof.
\end{proof}

Next, we find a better description for $G$.

\begin{Lemma}
We have
    \begin{align*}
      G = <\sigma^{(k)}_{M_0,j}: k\in \N, j<\la^k, \til{\theta}^k(M_0)_j = M_0>.
    \end{align*}
  \end{Lemma}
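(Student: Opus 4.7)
The plan is to prove the two inclusions separately, with only the forward direction requiring real work.

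For the inclusion $\langle \sigma^{(k)}_{M_0,j}:k\in\N,\, j<\la^k,\, \til{\theta}^k(M_0)_j=M_0\rangle \subseteq G$, I would simply iterate the cocycle identity~\eqref{coc11}: writing $j<\la^k$ in base $\la$ and unwinding $\sigma^{(k)}_{M_0,j}$ one step at a time, one obtains a product of permutations of the form $\sigma_{M',j'}$ with $M'\in\mathcal{X}$ and $j'<\la$, each of which is a generator of $G$ by definition.

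For the reverse inclusion $G\subseteq\langle \sigma^{(k)}_{M_0,j}:\til{\theta}^k(M_0)_j=M_0\rangle$, it suffices to show that every generator $\sigma_{M,j}$ (with $M\in\mathcal{X}$, $j<\la$) is already equal to some $\sigma^{(k)}_{M_0,j'}$ with the return condition $\til{\theta}^k(M_0)_{j'}=M_0$. The key tools here are Lemma~\ref{l:kaem} (which produces, for every $M\in\mathcal{X}$, numbers $k_M$ and $j_M<\la^{k_M}$ such that $\widehat{\Theta}^{k_M}(\sigma,M_0)_{j_M}=(\sigma,M)$ for all $\sigma\in G$) and the synchronizing relation~\eqref{eq:hat_synchronizing} (which gives $\widehat{\Theta}^{k_0}(\tau,M')_{j_0}=(\tau,M_0)$ for all $\tau\in G$ and all $M'\in\mathcal{X}$).

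Given $M\in\mathcal{X}$ and $j<\la$, I would set $M':=\til{\theta}(M)_j$ and concatenate these three pieces: first $\widehat{\Theta}^{k_M}$ takes $(\sigma,M_0)$ to $(\sigma,M)$ at position $j_M$; then one more application of $\widehat{\Theta}$ (at the inner coordinate $j$) gives, via~\eqref{wzorkoc}, $\widehat{\Theta}^{k_M+1}(\sigma,M_0)_{j_M\la+j}=\widehat{\Theta}(\sigma,M)_j=(\sigma\circ\sigma_{M,j},M')$; finally $\widehat{\Theta}^{k_0}$ pulls the $\mathcal{X}$-coordinate back to $M_0$, yielding
\[
\widehat{\Theta}^{k_M+1+k_0}(\sigma,M_0)_{(j_M\la+j)\la^{k_0}+j_0}=(\sigma\circ\sigma_{M,j},M_0).
\]
Reading off via~\eqref{eq:connection} (or equivalently~\eqref{wzornak}), this says that with $k:=k_M+1+k_0$ and $j':=(j_M\la+j)\la^{k_0}+j_0$ we have $\sigma^{(k)}_{M_0,j'}=\sigma_{M,j}$ and $\til{\theta}^k(M_0)_{j'}=M_0$, exactly the required presentation. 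Since the $\sigma_{M,j}$ generate $G$, this finishes the proof.

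The main obstacle is really just bookkeeping: one must be careful that Lemma~\ref{l:kaem} and the synchronizing identity~\eqref{eq:hat_synchronizing} hold for \emph{all} $\sigma\in G$ (which is exactly how they are stated), so the constructed return word $j'$ works uniformly in $\sigma$, and the permutation read off is genuinely $\sigma_{M,j}$ and not some conjugate. Everything else is an application of the cocycle formula~\eqref{wzorkoc}/\eqref{coc11}.
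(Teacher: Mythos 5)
Your proposal is correct and follows essentially the same route as the paper: the easy inclusion via the cocycle identity~\eqref{wzorkoc}/\eqref{coc11}, and the reverse inclusion by concatenating Lemma~\ref{l:kaem}, one application of $\widehat{\Theta}$ at inner index $j$, and the synchronizing relation~\eqref{eq:hat_synchronizing} to realize $\sigma_{M,j}$ as $\sigma^{(k)}_{M_0,j'}$ with $\til{\theta}^k(M_0)_{j'}=M_0$. Your index $j'=(j_M\la+j)\la^{k_0}+j_0$ is exactly the paper's $j_0+\la^{k_0}j+\la^{k_0+1}j_M$.
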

  \begin{proof}
By~\eqref{wzorkoc}, it is clear that any $\sigma^{(k)}_{M_0,j} \in G$.
Thus, it remains to write any $\sigma_{M,j}$ as $\sigma^{(k)}_{M_0,j'}$, where $\til{\theta}^k(M_0)_{j'} = M_0$.
Using~\eqref{wzorkoc}, Lemma~\ref{l:kaem}, \eqref{dejFINAL0} and finally~\eqref{eq:hat_synchronizing}, we have
    \begin{align*}
    \widehat{\Theta}^{k_0 + 1 + k_M}(\sigma,M_0)_{j_0 + \la^{k_0} j + \la^{k_0 + 1} j_M} &=
    \widehat{\Theta}^{k_0}(\widehat{\Theta}(\widehat{\Theta}^{k_M}(\sigma, M_0)_{j_M})_{j})_{j_0}\\
      &= \widehat{\Theta}^{k_0}(\widehat{\Theta}(\sigma, M)_{j})_{j_0}\\
        &= \widehat{\Theta}^{k_0}(\sigma \circ \sigma_{M,j}, \til{\theta}(M)_j)_{j_0}\\
        &= (\sigma \circ \sigma_{M,j}, M_0),
  \end{align*}
  which, by~\eqref{wzornak},  ends the proof.
  \end{proof}

  This allows us to give the final description for $G$.
  \begin{Lemma}\label{le:Grep}
    We have
    \begin{align*}
      G = \{\sigma^{(k)}_{M_0,j}:\: k\in \N,\; j<\la^k, \; \til{\theta}^k(M_0)_j = M_0\}.
    \end{align*}
  \end{Lemma}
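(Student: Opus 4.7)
The plan is to show that the set on the right-hand side, call it
\[
H := \{\sigma^{(k)}_{M_0,j}:\: k\in\N,\; j<\lambda^k,\; \widetilde{\theta}^k(M_0)_j = M_0\},
\]
is already a subgroup of $\mathcal{S}_c$; since the preceding lemma shows that $G$ is generated by $H$, the equality $G=H$ will follow immediately.

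First I would verify that $H$ is closed under composition, using the cocycle identity~\eqref{coc11}. If $\sigma^{(k_1)}_{M_0,j_1},\sigma^{(k_2)}_{M_0,j_2}\in H$, then $\widetilde{\theta}^{k_1}(M_0)_{j_1}=M_0$, so \eqref{coc11} (with $M=M_0$) gives
\[
\sigma^{(k_1)}_{M_0,j_1}\circ\sigma^{(k_2)}_{M_0,j_2}=\sigma^{(k_1+k_2)}_{M_0,\,j_1\lambda^{k_2}+j_2}.
\]
Moreover, using $\widetilde{\theta}^{k_1+k_2}(M_0)_{j_1\lambda^{k_2}+j_2} = \widetilde{\theta}^{k_2}(\widetilde{\theta}^{k_1}(M_0)_{j_1})_{j_2} = \widetilde{\theta}^{k_2}(M_0)_{j_2}=M_0$, the composition lies in $H$. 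In particular $H$ contains the identity (take $k=0$, or any iteration returning to $M_0$ by the primitivity of $\widetilde{\theta}$, combined with~\eqref{eq:hat_prolongable}).

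Next, closure under inversion follows automatically because $G\subset\mathcal{S}_c$ is finite: any $\sigma^{(k)}_{M_0,j}\in H$ has some finite order $\ell\geq 1$, and its inverse equals $(\sigma^{(k)}_{M_0,j})^{\ell-1}$, which by iterated application of the composition step above lies in $H$. Therefore $H$ is a subgroup of $\mathcal{S}_c$.

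Finally, by the previous lemma $G$ is generated (as a group) by elements of the form $\sigma^{(k)}_{M_0,j}$ with $\widetilde{\theta}^k(M_0)_j=M_0$, i.e.\ by elements of $H$. Since $H$ is itself a group, we conclude $G=H$. I do not anticipate a real obstacle: the only subtle point is the bookkeeping with the cocycle identity~\eqref{coc11}, but once the index $j_1\lambda^{k_2}+j_2$ is written down correctly, everything is formal.
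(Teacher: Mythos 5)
Your proposal is correct and follows essentially the same route as the paper: the paper likewise reduces the lemma to showing the right-hand set is closed under composition (via the same cocycle computation $\sigma^{(k_1)}_{M_0,j_1}\circ\sigma^{(k_2)}_{M_0,j_2}=\sigma^{(k_1+k_2)}_{M_0,\,j_1\lambda^{k_2}+j_2}$ with $\widetilde{\theta}^{k_1+k_2}(M_0)_{j_1\lambda^{k_2}+j_2}=M_0$) and then invokes the finite order of elements of $G$ to get inverses, concluding from the preceding lemma that the generated group equals the set itself.
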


  \begin{proof}
    It just remains to show that
    \begin{align*}
      \{\sigma^{(k)}_{M_0,j}:\:k\in\N,\; j<\la^k,\; \til{\theta}^k(M_0)_j = M_0\}
    \end{align*}
    is indeed a (finite) group. As each element in our group $G$ is of finite order, all we need to show is that the multiplication of two elements $\sigma^{(k_1)}_{M_0,j_1}, \sigma^{(k_2)}_{M_0,j_2}$, where $\til{\theta}^{k_i}(M_0)_{j_i} = M_0$, is again an element of the set.
    A simple computation yields
  \begin{align*}
    \widehat{\Theta}^{k_2+k_1}(\sigma, M_0)_{j_2 + \la^{k_2}j_1} &= \widehat{\Theta}^{k_2}(\widehat{\Theta}^{k_1}(\sigma,M_0)_{j_1})_{j_2} = \widehat{\Theta}^{k_2}(\sigma \circ \sigma^{(k_1)}_{M_0,j_1}, M_0)_{j_2}\\
    & = (\sigma \circ (\sigma^{(k_1)}_{M_0,j_1} \circ \sigma^{(k_2)}_{M_0,j_2}), M_0),
  \end{align*}
  so the result follows.
  \end{proof}

  Thus, for any $g \in G$, we have some $k_g\geq1$ and $ j_g < \la^{k_g}$ such that $g = \sigma^{(k_g)}_{M_0,j_g}$ and $\til{\theta}^{k_g}(M_0)_{j_g} = M_0$.
  Now, we can finally show the following result.

\begin{Prop}\label{le:primitive}
  $\widehat{\Theta}$ is primitive.
\end{Prop}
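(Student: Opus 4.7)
The plan is to show that for every pair of letters $(\sigma_1, M_1), (\sigma_2, M_2) \in G \times \mathcal{X}$, some iterate of $\widehat{\Theta}$ applied to $(\sigma_1, M_1)$ contains $(\sigma_2, M_2)$, and then to upgrade this to a uniform $k$ using the fixed points supplied by~\eqref{eq:hat_prolongable}. The argument is essentially a three-step concatenation, gluing together the landmarks that have already been established for $\widehat{\Theta}$.

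First, I would synchronize down to $M_0$. By~\eqref{eq:hat_synchronizing}, for every $\sigma_1 \in G$ and every $M_1 \in \mathcal{X}$,
\[
\widehat{\Theta}^{k_0}(\sigma_1, M_1)_{j_0} \;=\; (\sigma_1, M_0).
\]
Next, I would change the group coordinate from $\sigma_1$ to $\sigma_2$. Set $g := \sigma_1^{-1}\sigma_2 \in G$. By Lemma~\ref{le:Grep}, there exist $k_g \geq 1$ and $j_g < \lambda^{k_g}$ with $\widetilde{\theta}^{k_g}(M_0)_{j_g} = M_0$ and $\sigma^{(k_g)}_{M_0, j_g} = g$. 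Hence, by~\eqref{wzornak},
\[
\widehat{\Theta}^{k_g}(\sigma_1, M_0)_{j_g} \;=\; (\sigma_1 \circ g, M_0) \;=\; (\sigma_2, M_0).
\]
Finally, I would re-expand from $M_0$ to $M_2$. By Lemma~\ref{l:kaem}, there exist $k_{M_2}, j_{M_2}$ such that $\widehat{\Theta}^{k_{M_2}}(\sigma_2, M_0)_{j_{M_2}} = (\sigma_2, M_2)$. Stitching the three stages together via~\eqref{wzorkoc}, there are $K = k_0 + k_g + k_{M_2}$ and a suitable index $J < \lambda^K$ (obtained by the standard base-$\lambda$ expansion from the three local indices) such that
\[
\widehat{\Theta}^{K}(\sigma_1, M_1)_{J} \;=\; (\sigma_2, M_2).
\]

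It remains to obtain a single exponent $K$ that works simultaneously for all pairs. Since the alphabet $G \times \mathcal{X}$ is finite, only finitely many $K$'s appear above, and I would pass to the incidence matrix $A := M(\widehat{\Theta})$: what we have shown is that $A$ is irreducible. By~\eqref{eq:hat_prolongable}, $\widehat{\Theta}(\sigma, M_0)_0 = (\sigma, M_0)$ for every $\sigma \in G$, so each diagonal entry $A_{(\sigma, M_0),(\sigma, M_0)} \geq 1$. A standard graph-theoretic observation then yields: an irreducible non-negative integer matrix with a positive diagonal entry is primitive, i.e.\ $A^K$ has strictly positive entries for all sufficiently large $K$. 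This is precisely the primitivity condition~\eqref{su3} for $\widehat{\Theta}$.

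The main obstacle is really just the bookkeeping in the first part: making sure the composition of the three iterates produces a valid occurrence in $\widehat{\Theta}^{K}(\sigma_1, M_1)$ at a single position $J$. This is handled entirely by the cocycle identity~\eqref{wzorkoc} applied to $\widehat{\Theta}$ (equivalently, to $\widehat{\Theta}$ viewed through~\eqref{wzornak} and~\eqref{coc11}), and causes no difficulty once the three stages have been set up as above.
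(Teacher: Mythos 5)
Your proof is correct and follows essentially the same route as the paper's: both reach $(\sigma_2,M_2)$ from $(\sigma_1,M_1)$ by chaining the synchronization \eqref{eq:hat_synchronizing}, the representation of group elements from Lemma~\ref{le:Grep} (you apply it to $\sigma_1^{-1}\sigma_2$; the paper factors through $(id,M_0)$ and applies it to $h^{-1}$ and $g$ separately), and Lemma~\ref{l:kaem}, all glued by the cocycle identity \eqref{wzorkoc}. The only genuine divergence is the uniformization of the exponent: the paper pads the path by idling at $(id,M_0)$ using \eqref{eq:hat_prolongable}, so that any $k\geq k_1+k_2$ works, whereas you invoke the standard Perron--Frobenius fact that an irreducible nonnegative integer matrix with a positive diagonal entry is primitive. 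Both are valid (note that $M(\widehat{\Theta}^K)=M(\widehat{\Theta})^K$, so positivity of $A^K$ is exactly \eqref{su3}); your version outsources the padding to a standard lemma, while the paper's is self-contained. A minor remark: your application of Lemma~\ref{le:Grep} to $g=\sigma_1^{-1}\sigma_2$ also covers the case $g=id$ without a case distinction, since $\sigma^{(1)}_{M_0,0}=id$ by \eqref{eq:hat_prolongable}.
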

\begin{proof}
  As we have $\widehat{\Theta}(\sigma,M_0)_0 = (\sigma,M_0)$, it is sufficient to show that for all $g,h \in G$ and $M,M' \in \mathcal{X}$ there exist $k_1\geq1$, $j_1<\la^{k_1}$ such that $\widehat{\Theta}^{k_1}(id, M_0)_{j_1} = (g, M)$ and $k_2\geq1$, $j_2<\la^{k_2}$ such that $\widehat{\Theta}^{k_2}(h, M')_{j_2} = (id, M_0)$.
  Indeed, for all $k\geq k_1 + k_2$, we obtain that
  \begin{align*}
    \widehat{\Theta}^k(h,M')_{j_2 \la^{k-k_2} + j_1} &= \widehat{\Theta}^{k_1}(\widehat{\Theta}^{k-k_1-k_2}(\widehat{\Theta}^{k_2}(h,M')_{j_2})_{0})_{j_1}\\
    &= \widehat{\Theta}^{k_1}(\widehat{\Theta}^{k-k_1-k_2}(id, M_0)_{0})_{j_1} = \widehat{\Theta}^{k_1}(id, M_0)_{j_1} = (g,M).
  \end{align*}
  As there are only finitely many $g,h \in G$ and $M, M' \in \mathcal{X}$, we find that there exists some (large) $k$ such that for all $g,h \in G$ and $M, M' \in \mathcal{X}$ there exists $j$ such that $\widehat{\Theta}^{k}(h, M')_j = (g, M)$.

  Fix now $g \in G$ and $M \in \mathcal{X}$. Since $g=\sigma^{(k_g)}_{M_0,j_g}$, where $\til{\theta}^{k_g}(M_0)_{j_g} = M_0$, we have (using Lemma~\ref{l:kaem})
  \begin{align*}
    \widehat{\Theta}^{k_{M} + k_g}(id, M_0)_{j_{M} + \la^{k_M} j_g} = \widehat{\Theta}^{k_M}(\widehat{\Theta}^{k_g}(id, M_0)_{j_g})_{j_M} = \widehat{\Theta}^{k_M}(g, M_0)_{j_M} = (g, M).
  \end{align*}
  Furthermore, we have (using~\eqref{eq:hat_prolongable})
  \begin{align*}
    \widehat{\Theta}^{k_{h^{-1}} + k_0}(h, M')_{j_{h^{-1}} + \la^{k_{h^{-1}}} j_0} &= \widehat{\Theta}^{k_{h^{-1}}}(\widehat{\Theta}^{k_0}(h,M')_{j_0})_{j_{h^{-1}}} = \widehat{\Theta}^{k_{h^{-1}}}(h,M_0)_{j_{h^{-1}}}\\
    &= (h \circ h^{-1}, M_0) = (id, M_0),
  \end{align*}
  which completes the proof.
\end{proof}

\begin{Prop}
  $(X_{\til{\Theta}}, S)$ is a topological factor of $(X_{\widehat{\Theta}},S)$.
\end{Prop}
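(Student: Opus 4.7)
The plan is to produce the factor map by a 1-block (letter-to-letter) code on the alphabet $G \times \mathcal{X}$ and extend it coordinatewise. Because the first coordinate in $\widehat{\Theta}(\sigma, M)_j = (\sigma \circ \sigma_{M,j}, \widetilde{\theta}(M)_j)$ is built by \emph{right}-composition with $\sigma_{M,j}$, while the definition of $\sigma_{M,j}$ gives $\widetilde{\Theta}(n, M)_j = (\sigma_{M,j}^{-1}(n), \widetilde{\theta}(M)_j)$ with an \emph{inverse} permutation, the natural candidate is
\[
\pi : G \times \mathcal{X} \longrightarrow \{0, \ldots, c-1\} \times \mathcal{X}, \qquad \pi(\sigma, M) := (\sigma^{-1}(i_0), M),
\]
for any fixed $i_0 \in \{0, \ldots, c-1\}$. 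Extended coordinatewise to $\bar\pi : (G \times \mathcal{X})^{\Z} \to (\{0, \ldots, c-1\} \times \mathcal{X})^{\Z}$, this is automatically continuous and commutes with $S$.

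The key step is a one-line verification of the intertwining on letters,
\[
\pi\bigl(\widehat{\Theta}(\sigma, M)_j\bigr) = \bigl(\sigma_{M,j}^{-1}(\sigma^{-1}(i_0)),\, \widetilde{\theta}(M)_j\bigr) = \widetilde{\Theta}\bigl(\pi(\sigma, M)\bigr)_j,
\]
from which $\bar\pi \circ \widehat{\Theta}^k = \widetilde{\Theta}^k \circ \bar\pi$ follows for every $k \geq 1$ by induction and concatenation. This intertwining lets me check the two remaining points. First, for any $x \in X_{\widehat{\Theta}}$, every finite window of $x$ is a sub-block of some $\widehat{\Theta}^k(a)$, and its $\bar\pi$-image is therefore a sub-block of $\widetilde{\Theta}^k(\pi(a))$, so $\bar\pi(x) \in X_{\widetilde{\Theta}}$. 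Second, $\bar\pi(X_{\widehat{\Theta}})$ is a closed, non-empty, $S$-invariant subset of $X_{\widetilde{\Theta}}$; since $\widetilde{\Theta}$ is primitive (by the same argument as Proposition~\ref{le:primitive} after renaming the alphabet, or equivalently from Proposition~\ref{p:prim1} since $\widetilde{\Theta}$ is obtained by renaming letters of $\Theta$), $(X_{\widetilde{\Theta}}, S)$ is minimal by Proposition~\ref{p:minimality}, forcing the image to be all of $X_{\widetilde{\Theta}}$.

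I do not foresee a genuine obstacle; the argument is a direct analogue of Remark~\ref{r:factor2}, which exhibited a similar 1-block factor map from $X_{\widetilde{\Theta}}$ onto $X_{\theta \vee \widetilde{\theta}}$. The only point demanding care is matching the direction of the group action: $\widehat{\Theta}$ post-composes permutations while $\widetilde{\Theta}$ involves the inverse permutation, which is exactly what forces the choice $\sigma \mapsto \sigma^{-1}(i_0)$ rather than $\sigma \mapsto \sigma(i_0)$ in the definition of $\pi$. Once this asymmetry is accounted for, the rest of the proof is routine verification.
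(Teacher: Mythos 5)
Your proposal is correct and follows essentially the same route as the paper: the paper defines the same $1$-block code $(\sigma_n,M_n)\mapsto(i_n,M_n)$ with $\sigma_n(i_n)=0$ (i.e.\ your map with $i_0=0$), verifies via \eqref{wzornak} and \eqref{eq:connection} that the fixed point of $\widehat\Theta$ maps to the fixed point of $\widetilde\Theta$, and concludes by minimality. Your letter-level intertwining identity is just a restatement of \eqref{eq:connection} combined with \eqref{dejFINAL0}, so the two arguments coincide in substance.
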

\begin{proof}
  We note that a point in the space $X_{\widehat{\Theta}}$ is of the form $(\sigma_n, M_n)_{n\in \Z}$ and define the equivariant map
  \begin{align}\label{ekwi}
    (\sigma_n, M_n) \mapsto (i_n, M_n),
  \end{align}
  where $i_n$ is defined by $\sigma_n(i_n) = 0$.
  We claim that the fixed point of $\widehat{\Theta}$ that starts with $(id, M_0)$ is mapped to the fixed point of $\til{\Theta}$ that starts with $(0, M_0)$:
Indeed, in view of~\eqref{wzornak}, we have for all $k\geq 0, n<\la^k$ that $\widehat{\Theta}^{k}(id, M_0)_{n} = (\sigma_{M_0, n}^{(k)}, \widetilde{\theta}^k(M_0)_n)$ and, by~\eqref{eq:connection}, we have that $\sigma^{(k)}_{M_0, n}(i) = 0$ if and only if $\widetilde{\Theta}^k(0, M_0)_n = (i, \widetilde{\theta}^k(M_0)_n)$.
  Thus, the map~\eqref{ekwi}, sends the fixpoint of $\widehat{\Theta}$ starting with $(id, M_0)$ to the fixpoint of $\widetilde{\Theta}$ starting with $(0, M_0)$.
Now, the proof follows by the minimality of the dynamical systems under consideration.
\end{proof}
As $(X_{\widetilde\Theta},S)$ is topologically isomorphic to $(X_\Theta,S)$, using Remark~\ref{r:factor1}, we obtain the following.
\begin{Cor}\label{c:topfactor10}
  $(X_{\theta}, S)$ is a topological factor of $(X_{\hat{\Theta}},S)$.
\end{Cor}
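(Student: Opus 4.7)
The plan is to deduce this corollary by simply composing two factor maps that have already been built. The proposition immediately preceding the corollary establishes that $(X_{\widetilde{\Theta}}, S)$ is a topological factor of $(X_{\widehat{\Theta}}, S)$ via the equivariant map $\pi_1: (\sigma_n, M_n)_{n\in\Z} \mapsto (i_n, M_n)_{n\in\Z}$, where $i_n$ is the unique element of $\{0,\ldots,c-1\}$ with $\sigma_n(i_n) = 0$. So all that remains is to supply a topological factor map $\pi_2: X_{\widetilde{\Theta}} \to X_{\theta}$ and then form $\pi_2 \circ \pi_1$.

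For $\pi_2$, I would use what is already observed in Remark~\ref{r:factor2}: since $\widetilde{\Theta}$ is obtained from $\Theta = \theta \vee \widetilde{\theta}$ purely by renaming the letters of the alphabet $\overline{\mathcal{X}}$ using the fixed partial ordering on $\mathcal{X}$, the systems $(X_{\widetilde{\Theta}}, S)$ and $(X_{\theta \vee \widetilde{\theta}}, S)$ are topologically isomorphic via $(y_n, M_n)_{n\in\Z} \mapsto (z_n, M_n)_{n\in\Z}$, where $z_n \in M_n$ is the $(y_n+1)$-th element of $M_n$. Composing this with the first-coordinate projection $p_{X_\theta}: X_{\theta\vee\widetilde{\theta}} \to X_\theta$ from Remark~\ref{r:sjoi11} yields the desired continuous equivariant surjection $\pi_2: X_{\widetilde{\Theta}} \to X_{\theta}$.

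Hence $\pi_2 \circ \pi_1 : X_{\widehat{\Theta}} \to X_{\theta}$ is a composition of continuous, equivariant surjections and is therefore itself a topological factor map. No genuine obstacle appears at this stage; all the substantive work (primitivity of $\widehat{\Theta}$ and the existence of the factor map $\pi_1$) has been discharged in Proposition~\ref{le:primitive} and the proposition preceding the corollary. The corollary is simply the assembly of those pieces with the already-noted projections.
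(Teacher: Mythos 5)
Your proposal is correct and follows essentially the same route as the paper: compose the factor map $X_{\widehat{\Theta}}\to X_{\widetilde{\Theta}}$ from the preceding proposition with the renaming isomorphism $X_{\widetilde{\Theta}}\cong X_{\theta\vee\widetilde{\theta}}$ of Remark~\ref{r:factor2} and the first-coordinate projection of Remark~\ref{r:sjoi11}. The paper's one-line justification is exactly this assembly, so nothing is missing.
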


We have seen that $h(\theta) = h(\Theta) = h(\widetilde{\Theta})$, and $c(\theta) = c(\Theta) = c(\til{\Theta})$. These equalities do not carry over to $\hat{\Theta}$.
\begin{Lemma}\label{l:cngext}
  We have $c(\hat{\Theta}) = |G|$.
\end{Lemma}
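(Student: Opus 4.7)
The plan is to compute both bounds on $c(\widehat{\Theta})$ directly from the formula
\[
\widehat{\Theta}^{k}(\sigma,M)_{j} = (\sigma\circ\sigma^{(k)}_{M,j},\,\widetilde{\theta}^{k}(M)_{j}),
\]
obtained in \eqref{wzornak}, together with Proposition~\ref{p:listas}(i), which says $c(\widetilde{\theta})=1$.

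First I would establish the lower bound $c(\widehat{\Theta})\geq |G|$. Fix arbitrary $k\geq 1$ and $j<\la^k$, and fix any single $M\in\mathcal{X}$. By the displayed formula,
\[
\{\widehat{\Theta}^{k}(\sigma,M)_{j}:\sigma\in G\}=\{(\sigma\circ\sigma^{(k)}_{M,j},\,\widetilde{\theta}^{k}(M)_{j}):\sigma\in G\}.
\]
Now $\sigma^{(k)}_{M,j}\in G$ (since, by iterating \eqref{coc11} starting from $\sigma^{(1)}_{M,j}=\sigma_{M,j}$, each $\sigma^{(k)}_{M,j}$ is a product of generators of $G$). Therefore right multiplication by $\sigma^{(k)}_{M,j}$ is a bijection of $G$, and the set above has exactly $|G|$ elements, all lying in the $(k,j)$-th column of $\widehat{\Theta}$.

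For the upper bound $c(\widehat{\Theta})\leq|G|$, the key point is that since $c(\widetilde{\theta})=1$, there exist $k^{\ast}\geq 1$ and $j^{\ast}<\la^{k^{\ast}}$ such that $\widetilde{\theta}^{k^{\ast}}(M)_{j^{\ast}}$ takes a single value $M^{\ast}\in\mathcal{X}$, independent of $M\in\mathcal{X}$. Plugging this into the formula above,
\[
\widehat{\Theta}^{k^{\ast}}(G\times\mathcal{X})_{j^{\ast}}=\{(\sigma\circ\sigma^{(k^{\ast})}_{M,j^{\ast}},M^{\ast}):\sigma\in G,\,M\in\mathcal{X}\}\subseteq G\times\{M^{\ast}\},
\]
whose cardinality is at most $|G|$. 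Combining the two bounds yields $c(\widehat{\Theta})=|G|$.

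There is no real obstacle here: the whole argument reduces to the two observations that $\sigma^{(k)}_{M,j}\in G$ (so multiplying by it permutes $G$) and that, by synchronization of $\widetilde{\theta}$, some column of $\widetilde{\theta}^{k^{\ast}}$ collapses to a single letter of $\mathcal{X}$. Once these are in place, both inequalities fall out of \eqref{wzornak} immediately.
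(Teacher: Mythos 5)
Your proof is correct and follows essentially the same route as the paper: the lower bound comes from the fact that the first coordinate of any column of $\widehat{\Theta}^k$ is a full coset-translate of $G$, and the upper bound from a synchronizing column of $\widetilde{\theta}$ (the paper uses the specific column from~\eqref{eq:synch}, where additionally $\sigma^{(k_0)}_{M,j_0}=id$, but only the containment in $G\times\{M_0\}$ is needed, exactly as you observe).
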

\begin{proof}
  We recall that, by~\eqref{eq:synch}, there are $k_0, j_0$ such that
  \begin{align*}
    \hat{\Theta}^{k_0}(\sigma, M)_{j_0} = (\sigma, M_0),
  \end{align*}
  which shows that $c(\hat{\Theta}) \leq |\{(\sigma,M_0):\:\sigma\in G\}|=|G|$.

  Furthermore, it follows from the definition of $\widehat{\Theta}$ that for all $M \in \mathcal{X}$, we have
  \begin{align*}
    \hat{\Theta}^{k}(\{(g,M): g \in G\})_j = \{(g \cdot \sigma^{(k)}_{M,j}, \widetilde{\theta}^k(M)_j): g \in G\} = \{(g', \widetilde{\theta}^k(M)_j): g' \in G\}.
  \end{align*}
  This shows directly that
  \begin{align*}
    \hat{\Theta}^{k}(G \times \mathcal{X})_j = G \times \widetilde{\theta}^{k}(\mathcal{X})_j,
  \end{align*}
  which implies that $c(\hat{\Theta}) \geq |G|$.
\end{proof}

Even though for the height one could expect equality, it is not the case in general. The following example shows that $h(\hat{\Theta})$ and $h(\theta)$ can be different.

\begin{Example}\label{ex:bij}
We consider the following bijective (whence $c(\theta)=3$) substitution $\theta$:
  \begin{align*}
    \theta(a) &= aab\\
    \theta(b) &= bcc\\
    \theta(c) &= cba.
  \end{align*}
Since $a\mapsto aab$, obviously, $h(\theta)=1$. Of course $\mathcal{X}=\{\{a,b,c\}\}$, so in fact for $\widehat{\Theta}$ we are interested only in the first coordinate. The bijections given by the columns yield three permutations of $\{0,1,2\}$: $\sigma_0=Id$, $\sigma_1=(12)$ and $\sigma_2=(012)$. As they generate $\mathcal{S}_3$, we have  $G=\mathcal{S}_3$.
We also have
  \begin{align*}
    \widehat{\Theta}(\sigma) = (\sigma)(\sigma \circ (12)) (\sigma \circ (021)),
  \end{align*}
  for all $\sigma \in \mathcal{S}_3$.

We consider the fixpoint $u$ that starts with $id$ and find that $u[4] = (12)(12) = id$ and $u[26] = (021)(021)(021) = id$. Thus, see~\eqref{wyso}, we have  $g_0 = {\rm gcd }(S_0) \leq 2$.
  Furthermore, we find that for $n = \sum_{i=0}^{r} n_i 3^i$,
  \begin{align*}
    u[n] = \widehat{\Theta}^{r+1}(id)_n = \sigma_{n_r} \circ \ldots \circ \sigma_{n_0}.
  \end{align*}
  We consider the sign of a permutation and find that $${\rm sgn}(\sigma_0) = {\rm sgn}(\sigma_2) = 1, {\rm sgn}(\sigma_1) = -1$$ which gives
  \begin{align*}
    {\rm sgn}(u[n]) = \prod_{i=0}^{r} {\rm sgn}(\sigma_{n_i}) = \prod_{i=0}^{r} (-1)^{n_i}  = \prod_{i=0}^{r} (-1)^{n_i3^i} = (-1)^{\sum n_i 3^i} = (-1)^n.
  \end{align*}
  This shows that $h(\widehat{\Theta}) = 2$.
\end{Example}

This shows that $\til{\Theta}$ is much closer to $\theta$ than $\widehat{\Theta}$, but $\widehat{\Theta}$ has a structural advantage, as it relies on a group $G$. Indeed, we are able to find a representation of the height within the group $G$.

\begin{Lemma}\label{le:G_0}
  Let $\widehat{\Theta}: G \times \mathcal{X} \to (G \times \mathcal{X})^{\la}$, be as described above. Then, there exists $G_0 \leq G$ such that $G/G_0 \cong \Z/h(\widehat{\Theta})\Z$.
  Furthermore, if we define $\zeta: G/G_0 \times \mathcal{X} \to (G/G_0 \times \mathcal{X})^{\la}$ by
  \begin{align*}
    \zeta(G_0 g, M)_j = (G_0 g \sigma_{M,j}, \widetilde{\theta}(M)_j),
  \end{align*}
  then $\zeta$ is the joining of a periodic substitution $p$ (of period $h$) and $\widetilde{\theta}$, i.e. $\zeta = p \vee \widetilde{\theta}$.
\end{Lemma}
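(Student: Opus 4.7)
The plan is to build $G_0$ as the kernel of a surjective homomorphism $\tilde{f}:G\to\Z/h\Z$ (with $h=h(\widehat{\Theta})$) produced from the height-$1$-coding of $\widehat{\Theta}$. The scheme is essentially that of Lemma~\ref{le:joining_height0}, but the clean group-extension structure of $\widehat{\Theta}$ forces the $1$-coding to factor through the $G$-coordinate, and an extra normalization is needed to upgrade the resulting function to a group homomorphism.

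First, standard height theory for the primitive substitution $\widehat{\Theta}$ supplies a $1$-coding $f:G\times\mathcal{X}\to\Z/h\Z$ which sends the fixed point $u$ starting at $(\mathrm{id},M_0)$ (cf.~\eqref{eq:hat_prolongable}) to the periodic sequence $0,1,\dots,h-1,0,1,\dots$ and satisfies $f\!\left(\widehat{\Theta}^k(\sigma,M)_j\right)\equiv f(\sigma,M)\lambda^k+j\pmod h$ for all letters $(\sigma,M)$ and all $j<\lambda^k$. Applying this with the synchronizing data $(k_0,j_0)$ of \eqref{eq:hat_synchronizing} gives $f(\sigma,M_0)\equiv f(\sigma,M)\lambda^{k_0}+j_0\pmod h$, and because $\gcd(\lambda,h)=1$, solving for $f(\sigma,M)$ shows the right-hand side is independent of $M$. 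Thus $f(\sigma,M)=\phi(\sigma)$ for a function $\phi:G\to\Z/h\Z$, and $\phi$ is surjective since $\phi(\sigma_n)\equiv n\pmod h$ along the first coordinate of $u$.

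Next I promote $\tilde{f}(\sigma):=\phi(\sigma)-\phi(\mathrm{id})$ to a homomorphism. By Lemma~\ref{le:Grep}, every $g\in G$ is of the form $\sigma^{(k)}_{M_0,j}$ with $\widetilde{\theta}^k(M_0)_j=M_0$, i.e.\ $\widehat{\Theta}^k(\mathrm{id},M_0)_j=(g,M_0)$. Property \eqref{eq:hat_prolongable}, which gives $\sigma^{(n)}_{M_0,0}=\mathrm{id}$ for every $n\geq 0$, lets one replace $(k,j)$ by $(k+n,\lambda^n j)$; in particular, $k$ may be chosen as any sufficiently large multiple of $d:=\mathrm{ord}_{(\Z/h\Z)^\times}(\lambda)$. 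With such a normalization, $\lambda^k\equiv 1\pmod h$ and the height identity $\phi(g)\equiv\phi(\mathrm{id})\lambda^k+j\pmod h$ collapses to $\tilde{f}(g)\equiv j\pmod h$. The composition rule from the proof of Lemma~\ref{le:Grep}, $\sigma^{(k_1)}_{M_0,j_1}\circ\sigma^{(k_2)}_{M_0,j_2}=\sigma^{(k_1+k_2)}_{M_0,\,j_2+\lambda^{k_2}j_1}$, then yields, when $k_1,k_2$ are both multiples of $d$, the identity $\tilde{f}(g_1 g_2)\equiv j_2+\lambda^{k_2}j_1\equiv j_1+j_2\equiv\tilde{f}(g_1)+\tilde{f}(g_2)\pmod h$. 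Hence $\tilde{f}$ is a surjective homomorphism and $G_0:=\ker\tilde{f}$ satisfies $G/G_0\cong\Z/h\Z$.

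For the second claim, apply the height identity to $\widehat{\Theta}(\mathrm{id},M)_j=(\sigma_{M,j},\widetilde{\theta}(M)_j)$ to obtain $\phi(\sigma_{M,j})\equiv\phi(\mathrm{id})\lambda+j\pmod h$; the right-hand side is independent of $M$, so $\tilde{f}(\sigma_{M,j})$ and hence the coset $G_0\sigma_{M,j}=:\bar{\sigma}_j$ depends only on $j$. Therefore $\zeta(G_0 g,M)_j=(G_0 g\,\bar{\sigma}_j,\,\widetilde{\theta}(M)_j)$ is exactly the substitutional joining $p\vee\widetilde{\theta}$ on $G/G_0\times\mathcal{X}$, where $p(G_0 g)_j:=G_0 g\,\bar{\sigma}_j$. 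Identifying $G/G_0$ with $\Z/h\Z$ via $\tilde{f}$, $p$ becomes the affine map $a\mapsto a+c+j$ on $\Z/h\Z$ with $c:=\phi(\mathrm{id})(\lambda-1)$, and its fixed point is the arithmetic progression $n\mapsto n-\phi(\mathrm{id})\pmod h$, an $h$-periodic sequence. The main obstacle is the promotion step: naively $\tilde{f}$ is only a $1$-cocycle twisted by the multiplicative character $k\mapsto\lambda^k$ on $\Z/h\Z$, and it becomes honestly additive only after killing this twist by restricting representations to multiples of $d$; it is precisely \eqref{eq:hat_prolongable} that guarantees such representations exist for every $g\in G$.
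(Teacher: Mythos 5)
Your proof is correct and follows essentially the same route as the paper's: extract the height $1$-coding $f$ with the identity $f(\widehat{\Theta}^k(\sigma,M)_j)\equiv f(\sigma,M)\lambda^k+j \bmod h$, use \eqref{eq:hat_synchronizing} to see that $f$ depends only on the $G$-coordinate, use the Lemma~\ref{le:Grep} representations together with the composition rule to get additivity, and quotient to obtain $\zeta=p\vee\widetilde{\theta}$. The only (cosmetic) difference is how the twist by $\lambda^{k}$ is killed: the paper normalizes $\lambda\equiv 1\bmod h$ by passing to a power of $\widehat{\Theta}$, whereas you keep $\lambda$ general and instead pad the representations $g=\sigma^{(k)}_{M_0,j}$ via \eqref{eq:hat_prolongable} so that $k$ is a multiple of ${\rm ord}(\lambda\bmod h)$; also your shift $\tilde f=\phi-\phi(\mathrm{id})$ is vacuous since $\phi(\mathrm{id})=f(u[0])=0$.
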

\begin{proof}
  Since $\widehat{\Theta}$ has height $h = h(\widehat{\Theta})$, there exists $f: G\times \mathcal{X} \to \{0,\ldots,h-1\}$ (which we also treat as a 1-code) such that the fixed point $u$ of $\widehat{\Theta}$ obtained by iterations of $\widehat{\Theta}$ at $(id, M_0)$ is mapped via $f$ to $01\ldots(h-1)01\ldots$, i.e. $f(u[n]) = n \bmod h$.
  Furthermore, we can assume without loss of generality that $\lambda \equiv 1 \bmod h$, as we can always replace $\widehat{\Theta}$ by $\widehat{\Theta}^t$, if necessary.
  We find similarly to~\eqref{rowno1} that
  \begin{align*}
    f(\widehat{\Theta}^k(\sigma, M)_j) \equiv f((\sigma, M)) \lambda^k + j \bmod h
  \end{align*}
  for each $k\geq0$ and $j<\lambda^k$.
  Using additionally~\eqref{eq:hat_synchronizing}, this gives
  \begin{align*}
    f((\sigma, M_0)) = f(\widehat{\Theta}^{k_0}(\sigma, M)_{j_0} \equiv f((\sigma, M)) \lambda^{k_0} + j_0 \equiv f((\sigma, M)) + j_0 \bmod h,
  \end{align*}
  which shows that $f((\sigma, M))$ only depends on $\sigma$. Since (for each $k$) $u$ begins with $\widehat{\Theta}^k(id,M_0)$, by the definition of $f$, we have
  $f(\sigma^{(k)}_{M_0,j}) = j \bmod h$. 
  Due to Lemma~\ref{le:Grep}, we can write each $g_1, g_2 \in G$ as $g_i = \sigma_{M_0, j_i}^{(k_i)}$.
  Thus, we find
  \begin{align*}
    f(g_1 g_2) = f(\widehat{\Theta}^{k_2}(\widehat{\Theta}^{k_1}(id, M_0)_{j_1})_{j_2}) \equiv f(\widehat{\Theta}^{k_1}(id, M_0)_{j_1}) + j_2 \equiv j_1 + j_2 \bmod h.
  \end{align*}
  This shows, that for all $g_1, g_2 \in G$ we have
  \begin{align*}
    f(g_1 g_2) \equiv f(g_1) + f(g_2) \bmod h.
  \end{align*}
  It follows that $f$ is a group homomorphism and therefore $G_0:=f^{-1}(0)$ is a normal subgroup.
  We can identify $G g$ by $f(g)$ and the last statement follows immediately for $p: \{0,\ldots,h-1\} \to \{0,\ldots,h-1\}^{\la}$ defined by $p(i)_j = \la i+j \bmod h$.
\end{proof}

\begin{Remark}
  Lemma~\ref{le:G_0} allows us to find a representation of the maximal equicontinuous factor of $\widehat{\Theta}$ in the case that $\widetilde{\theta}$ is not periodic. In general, we need to rely on a more complicated construction.
\end{Remark}

First, we would like to note that we can find a representation of $G$ in the centralizer of $(X_{\widehat{\Theta}}, S)$. Indeed,
given $\tau\in G$, consider
$V_\tau:G \times \mathcal{X}\to
G \times \mathcal{X}$ defined by
\beq\label{gext1}
V_\tau(\sigma, M)=(\tau\circ\sigma,M).\eeq
Note that in view of~\eqref{dejFINAL0}, $\widehat\Theta$ ``commutes''
with $V_\tau$. It follows from Remark~\ref{r:bijcent} that $V_\tau$ (uniquely)
extends to a homeomorphism $V_\tau$ of $X_{\widehat\Theta}$ and
commutes with $S$:
\beq\label{gext111}
V_\tau((\sigma_n,M_n)_{n\in\Z})=(\tau\circ\sigma_n,M_n)_{n\in\Z}\eeq
for each $(\sigma_n,M_n)_{n\in\Z}\in X_{\widehat\Theta}$.
It follows that we have a (finite) group
$\mathcal{V}:=\{V_{\tau}:\tau\in G\}$ of
homeomorphisms of $X_{\widehat\Theta}$ commuting with the shift.

Next, we want to determine the dynamical system obtained by factoring $(X_{\widehat{\Theta}},S)$ by $\mathcal{V}$. For this aim, consider the map $g: (G\times \mathcal{X})^{\Z} \to (G\times \mathcal{X}\times \mathcal{X})^{\Z}$ defined by
\begin{align*}
  g((\sigma_n, M_n)_{n\in \Z}) = (\sigma_{n+1}^{-1} \circ \sigma_n, M_n, M_{n+1})_{n\in \Z},
\end{align*}
which means that $g$ is a sliding block code with (right-) radius $1$.
We see directly that $g((\sigma_n, M_n)_{n\in \Z}) = g((\sigma'_n, M'_n)_{n\in \Z})$ if and only if $(\sigma'_n, M'_n)_{n \in \Z} = V_{\tau}((\sigma_n, M_n)_{n\in \Z})$ for some $\tau \in G$.
We want to describe $g(X_{\widehat{\Theta}})$.
As $g$ is a sliding block code, it follows that $g(X_{\hat{\Theta}})$ is a (minimal) subshift. We want to show that it is actually a substitutional dynamical system, where the substitution has column number $1$, i.e. it is synchronizing.

To this end, let us define a substitution $\eta: G \times \mathcal{X} \times \mathcal{X} \to (G \times \mathcal{X} \times \mathcal{X})^{\la}$ as follows
\begin{align*}
  \eta(\sigma, M, M')_j =
    \left\{\begin{array}{cl}
      (\sigma_{M,j+1}^{-1} \circ \sigma_{M,j}, \til{\theta}(M)_j, \til{\theta}(M)_{j+1}), & \mbox{for } j< \la -1\\
      (\sigma_{M',0}^{-1} \circ \sigma \circ \sigma_{M,\la-1}, \til{\theta}(M)_{\la-1}, \til{\theta}(M')_{0}), & \mbox{for } j = \la-1.
    \end{array}\right.
\end{align*}
A simple computation gives
$$
  \eta^k(\sigma, M, M')_j =
$$$$    \left\{\begin{array}{cl}
      ((\sigma^{(k)}_{M,j+1})^{-1} \circ \sigma^{(k)}_{M,j}, \til{\theta}^{k}(M)_j, \til{\theta}^{k}(M)_{j+1}), & \mbox{for } j< \la^k -1\\
      ((\sigma^{(k)}_{M',0})^{-1} \circ \sigma \circ \sigma^{(k)}_{M,\la-1}, \til{\theta}(M)_{\la-1}, \til{\theta}(M')_{0}), & \mbox{for } j = \la^k-1.
    \end{array}\right.
$$

We denote the fixpoint of $\widehat{\Theta}$ that starts with $(id, M_0)$ by $u$ and denote $u[n] = (\sigma_n, M_n)$.
This allows us to reduce the alphabet of the substitution $\eta$ from $G \times \mathcal{X} \times \mathcal{X}$ to $\mathbb{B}$, where
\begin{align*}
  \mathbb{B} := \{(\sigma_{n+1}^{-1} \circ \sigma_n, M_n, M_{n+1}): n\geq 0\}.
\end{align*}

The formula above shows that $g$ maps the fixed point of $\widehat{\Theta}$ starting with $(\sigma_0, M_0)(\sigma_1, M_1)\ldots$ to the fixed point of $\eta$ starting with $(\sigma_1^{-1} \circ \sigma_0, M_0, M_1)$.
Furthermore, we see directly that the restriction of $f: X_{\widehat{\Theta}} \to \mathbb{B}^{\Z}$ is still well-defined on the fixedpoint of $\widehat{\Theta}$ and therefore on $X_{\widehat{\Theta}}$. This restriction is necessary to ensure that $\eta$ is primitive.

Thus we showed that $g(X_{\widehat{\Theta}}) = X_{\eta}$.

It remains to show that $\eta$ has column number $1$.
As $\til{\theta}$ has column number~$1$, we find by~\eqref{lk2} some integers $k\geq1$, $j<\la^k-1$ and $M'\in \mathcal{X}$ such that $\til{\theta}^{k}(M)_j = M'$ holds for all $M \in \mathcal{X}$.
It follows that for any $\sigma \in G$ and $M_1, M_2 \in \mathcal{X}$, we have
\begin{align*}
  \eta^{2k}(\sigma, M_1, M_2)_{j \la^k + j} &= \eta^k(\eta^k(\sigma, M_1, M_2)_j)_j\\
  &= \eta^{k}((\sigma_{M_1, j+1}^{(k)})^{-1} \circ \sigma_{M_1, j}^{(k)}, M', \theta^{k}(M_1)_{j+1})_j\\
  &= ((\sigma_{M',j+1}^{(k)})^{-1} \circ \sigma_{M',j}^{(k)}, M', \theta^{k}(M')_{j+1})
\end{align*}
which shows that $c(\eta) = 1$.

\begin{Example}
  We give the explicit construction of $\eta$ for the Rudin-Shapiro sequence, which was already introduced in Example~\ref{ex:ex2}.
  We recall that,
  \begin{align*}
    \widetilde{\Theta}(0,M_0) = (0,M_0)(0,M_1), \quad &\widetilde{\Theta}(1,M_0)
    = (1,M_0)(1,M_1),\\
    \widetilde{\Theta}(0,M_1) = (0,M_0)(1,M_1),
    \quad &\widetilde{\Theta}(1,M_1) = (1,M_0)(0,M_1).
  \end{align*}
  This gives directly
  \begin{align*}
    \widehat{\Theta}(\sigma, M_0) &= (\sigma, M_0)(\sigma, M_1),\\
    \widehat{\Theta}(\sigma, M_1) &= (\sigma, M_0)(\sigma \circ (01), M_1).
  \end{align*}
  We see that $\mathbb{B} = \{(\sigma, M, M') \in G \times \mathcal{X} \times \mathcal{X}: M \neq M'\}$ as the only fixpoint of $\widetilde{\theta}$ is given by $M_0M_1M_0M_1\ldots$
  Hence, $\eta$ is defined as follows:
  \begin{align*}
    \eta(\sigma, M_0, M_1) &= (id, M_0, M_1)(\sigma, M_1, M_0),\\
    \eta(\sigma, M_1, M_0) &= ((01), M_0, M_1)(\sigma \circ (01), M_1, M_0).
  \end{align*}
  A simple computation yields $\eta^2(\sigma, M, M')_0 = (id, M_0, M_1)$ for all $(\sigma, M, M') \in \mathbb{B}$ which shows that $\eta$ has column number $1$.
\end{Example}

This procedure works well to find a representation of the maximal equicontinuous factor when $h(\widehat{\Theta}) = 1$. However, in general we are factoring out too much and need to take a subgroup of $\mathcal{V}$, namely $\mathcal{V}_1 := \{V_{\tau}: \tau \in G_0\}$.
Therefore we define $g_h: (G \times \mathcal{X})^{\Z} \to (G \times \mathcal{X} \times \mathcal{X} \times \{0,\ldots,h-1\})^{\Z}$ as an "extension" of the previously mentioned function $g: (G \times \mathcal{X})^{\Z} \to (G \times \mathcal{X} \times \mathcal{X})^{\Z}$ in the following way (see the proof of Lemma~\ref{le:G_0} for $f$ and its properties):
\begin{align*}
  g_h((\sigma_n, M_n)_{n\in \Z}) = (\sigma_{n+1}^{-1} \circ \sigma_n, M_n, M_{n+1}, f(\sigma_n))_{n\in \Z}.
\end{align*}
We see directly that $g((\sigma_n, M_n)_{n\in \Z}) = g((\sigma'_n, M'_n)_{n\in \Z})$ if and only if $(\sigma'_n, M'_n)_{n \in \Z} = V_{\tau}((\sigma_n, M_n)_{n\in \Z})$ for some $\tau \in G_0$, as $f(\tau \sigma) \equiv f(\tau) + f(\sigma) \bmod h$.

Thus, we consider now $\eta_h: G \times \mathcal{X} \times \mathcal{X} \times \{0,\ldots,h-1\} \to (G \times \mathcal{X} \times \mathcal{X} \times \{0,\ldots,h-1\})^{\la}$, where the first three coordinates coincide with $\eta$ and the last coordinate can be seen as another substitution $p: \{0,\ldots,h-1\} \to \{0,\ldots, h-1\}^{\la}$ where $p(i)_j = \la i + j \bmod h$.

This gives that $\eta_h = \eta \vee p$.
As we have seen that $X_{\eta}$ is measure-theoretically isomorphic to $(H_{\lambda}, m_{H_{\lambda}}, R)$ and $(X_p, S) \cong (\Z/ h\Z, m_{\Z/ h\Z}, \tau_h)$, we have found a factor of $X_{\widehat{\Theta}}$ that is itself given by $X_{\eta_h}$ and is measure-theoretically isomorphic to the maximal equicontinuous factor of $X_{\widehat{\Theta}}$.

\section{Proof of Theorem~\ref{t:main}, Corollary~\ref{c:main} and some questions}

{\em Proof of Theorem~\ref{t:main}}: We have already shown~(i) of Theorem~\ref{t:main}. We now need to handle the case $c(\theta)>h(\theta)$.

To this end, we have defined $\theta\vee\widetilde{\theta}$  extending $\theta$ (and being primitive), where
$(X_{\widetilde\theta},S)$ ``represents'' (measure-theoretically) either the $(H_\lambda, m_{H_\lambda},R)$ factor of
$(X_\theta,S)$ if $\theta$ is not quasi-bijective or it is finite. Via an isomorphic copy $\widetilde\Theta$ of $\theta\vee\widetilde\theta$, we finally have got the substitution $\widehat\Theta$ which is primitive by Proposition~\ref{le:primitive}. Moreover, by Corollary~\ref{c:topfactor10},
$(X_{\widehat{\Theta}},S)$  has  $(X_\theta,S)$ as its
topological factor, and therefore $c(\widehat\Theta)>h(\widehat\Theta)$ (it cannot be synchronized).~\footnote{It has also $(X_{\widetilde\theta},S)$ as its topological factor.}. We only need to prove~(ii) of Theorem~\ref{t:main} for $(X_{\widehat\Theta},S)$.

Via \eqref{gext111}, we have shown that there is a compact group, a copy of the group $G$, namely $\mathcal{V}=\{V_\tau:\:\tau\in G\}$, in the centralizer of $S$ which consists of homeomorphisms. Because of~Lemma~\ref{l:cngext} and Proposition~\ref{p:cextension}, the factor $\sigma$-algebra $\cb(\mathcal{V})$ of subsets which are fixed by all elements of $\mathcal{V}$ ``represents'' (measure-theoretically) $(H_\lambda,m_{H_{\lambda}},R)$.
Of course, the $L^2$-space for
$(X_{\widehat\Theta},\mu_{\widehat\Theta},S)$ is spanned by
continuous functions. Now, the conditional expectation of $F\in C(X_{\widehat\Theta})$ with respect to $\cb(\mathcal{V})$ is given by
$\frac1{|G|}\sum_{\tau\in G}F\circ V_\tau$, so it is still a continuous function. It follows that each continuous function $F$ is
represented as $F_1+F_2$, where $F_1$ is continuous and orthogonal to the
$L^2(\cb(\mathcal{V}))$, and $F_2$ is represented by a
continuous function that belongs to $L^2(\cb(\mathcal{V}))$. If we knew that $H_\lambda$ represents the Kronecker factor of $(X_{\widehat\Theta},\mu_{\widehat\Theta},S)$, via Proposition~\ref{p:essc3}, we could apply Section~\ref{s:mainstrategy} to conclude the proof. But, as we have already noticed, we cannot control $h=h(\widehat\Theta)$ and the Kronecker factor $\mathcal{K}$ of $(X_{\widehat\Theta},\mu_{\widehat\Theta},S)$ is given by $(H_\lambda\times \Z/h\Z,m_{H_\lambda}\otimes m_{\Z/h\Z},R\times\tau_h)$. But $\cb(\mathcal{V})\subset\mathcal{K}$, so by a result of Veech~\cite{Ve}, there is a compact subgroup $\mathcal{V}_1\subset\mathcal{V}$ such that $\mathcal{K}$ is the $\sigma$-algebra of subsets fixed by all elements of $\mathcal{V}_1$.
(We are actually able to give a concrete description of $\mathcal{V}_1$ as  $\{V_{\tau}: \tau \in G_0\}$, where $G_0$ was defined in Lemma~\ref{le:G_0}.) So, we can now repeat all the above arguments (cf.\ Footnote~\ref{f:maintool}) with $\mathcal{V}$ replaced by $\mathcal{V}_1$ to complete the proof. \bez

\noindent
{\em Proof of Corollary~\ref{c:main}}:
We begin with a $1$-code $F:X_\theta\to\C$ (that is, $F(y)$ depends only on the 0-coordinate $y[0]$ of $y\in X_\theta$). We assume that for some fixed point $y\in X_\theta$, the sequence $(F(S^ny))_{n\in \N}$ is multiplicative. Such an $F$ is a continuous function and $(X_\theta,S)$ is a topological factor of $(X_{\widehat\Theta},S)$, say $\pi:X_{\widehat\Theta}\to X_\theta$ settles a factor map. Now, $F\circ\pi\in C(X_{\widehat\Theta})$ and (as in the proof of Theorem~\ref{t:main}) we have $F\circ \pi=F_1+F_2$, where both $F_1,F_2$ are continuous, $F_1\in L^2(\mathcal{B}(\mathcal{V}))$ and $F_2\perp L^2(\mathcal{B}(\mathcal{V}))$. Take $x\in\pi^{-1}(y)$ that can be chosen as a fixed point of $\widehat\Theta$. By Theorem~\ref{t:main},
$$
\frac1N\sum_{n\leq N}\ov{F_2(S^nx)}F(S^n(\pi(x)))\to0$$
since $(F(S^n\pi(x)))$ is multiplicative. On the other hand, by unique ergodicity,
$$
\frac1N\sum_{n\leq N}\ov{F_2(S^nx)}F(S^n(\pi(x)))\to\int_{X_{\widehat\Theta}}\ov{F_2}\cdot F\circ\pi\,d\mu_{\widehat\Theta}.$$
It follows that $F\circ\pi\perp F_2$, which implies $F_2=0$.
It follows that the spectral measure of $F \circ \pi =F_1$ is discrete.

Let us first assume that $h(\widehat{\Theta}) = 1$.
First we note that $\pi$ is actually a $1$-code, which shows that $F\circ \pi$ is also a $1$-code.
Furthermore, we have an explicit formula for $F_1$:
\beq\label{expfor}
  F_1(x) = \frac{1}{|G|} \sum_{\tau \in G} (F\circ \pi) \circ V_{\tau}(x).
\eeq
As $F \circ \pi$ is a $1$-code and $V_{\tau}$ is also a $1$-code, it follows that $F_1$ is a $1$-code as well.
Moreover, by~\eqref{expfor}, we obtain that
\begin{align*}
  F_1(\sigma, M) = \frac{1}{|G|} \sum_{\tau \in G} (F\circ \pi) (\tau \sigma, M) = \frac{1}{|G|} \sum_{\sigma' \in G} (F\circ\pi)(\sigma', M).
\end{align*}
Thus,  $F_1(\sigma, M)$ only depends on $M$ and we see that
\begin{align*}
  F(S^n y) = F_1(S^n \overline{M}) = F_1(M_n).
\end{align*}
As we have shown that $F_1\in C(X_{\widetilde\theta})$ is a $1$-code and $\widetilde\theta$ is synchronizing by Proposition~\ref{p:listas}~(i) and the result follows.

Consider now the case $h(\widehat{\Theta})>1$.
We recall that there exists some $f: G \times \mathcal{X} \to \{0,\ldots,h-1\}$ such that $f(y[n]) \equiv n \bmod h$.
As we have seen at the end of Section~\ref{sec:group}, we can identify $\mathcal{V}_1$ as $\{V_{\tau}: \tau \in G_0\}$.
Now, we find that $F_1(\sigma, M)$ depends on $M$ and $\sigma G_0$, or equivalently on $M$ and $f(\sigma, M) = f(\sigma G_0,M)$.
This gives that
\begin{align*}
  F(S^n y) = F_1((n \bmod h), M_n).
\end{align*}
As $(M_n)_{n\in \N}$ and $(n \bmod h)_{n\in \N}$ are both Weyl-rationally almost periodic, we see directly that $(F_1((n \bmod h), M_n))_{n\in \N}$ is also Weyl-rationally almost periodic.
Furthermore, we define a substitution$p:\{0,\ldots,h-1\}\to\{0,\ldots,h-1\}^\lambda$, by setting:
\begin{align*}
  p(i)_j = i \la +j \bmod h,
\end{align*}
for all $j<\la$ and $i\in\{0,\ldots,h-1\}$. We immediately compute that $c(p) = h(p) = h$.
Moreover, we find that $((n \bmod h), M_n))_{n\in \N}$ is the fixpoint of $p \vee \widetilde{\theta}$.
We find by Lemma~\ref{le:joining_height} that
\begin{align*}
  \max(c(p), c(\widetilde{\theta})) \leq c(p \vee \widetilde{\theta}) \leq c(p) c(\widetilde{\theta})
\end{align*}
and since $c(\widetilde{\theta})=1$ this gives $c(p \vee \widetilde{\theta}) = h$.
Furthermore, we have
\begin{align*}
  {\rm lcm}(h(p), h(\widetilde{\theta})) | h(p \vee \widetilde{\theta})
\end{align*}
and as $h(\widetilde{\theta}) = 1$ this gives that $h \leq h(p \vee \widetilde{\theta})$.
However, Lemma~\ref{le:joining_height0} implies that $h(p \vee \widetilde{\theta}) \leq c(p \vee \widetilde{\theta})$ and therefore $h(p \vee \widetilde{\theta}) = h$.
It follows that $(F(S^n(y))_{n\in \N}$ is given as a $1$-code of a fixpoint of a substitution of constant length for which the column number equals the height, or in other words, it has purely discrete spectrum.
\bez

\begin{Remark}
If we want to obtain in the assertion a weaker conclusion,\footnote{Note that $\mob^2$ is Besicovitch rationally almost periodic but is not Weyl rationally almost periodic, e.g.\ \cite{Be-Ku-Le-Ri}.} namely that such functions are Besicovitch rationally almost periodic, another proof of Corollary~\ref{c:main} can be obtained in the following way.

We first recall that if $$\cm:=\{\bfv:\N\to\C:\:|\bfv|\leq1\text{ and }\bfv\text{ is multiplicative}\}$$
then to check that  $\bfv$ has a mean along an arbitrary arithmetic progression, it is enough to show that $\bfv\cdot\chi$ has a mean for each Dirichlet character $\chi$, see e.g.\ Proposition 3.1 in \cite{Be-Ku-Le-Ri2}. Now, the argument used
in the proof of Lemma~2 in \cite{Sch-P} works well and it shows that a function $\bfv\in\cm$ taking finitely many values has a mean along any arithmetic progression. Furthermore, Theorem~1.3 in \cite{Be-Ku-Le-Ri2} says that $\bfv$ is either Besicovitch rationally almost periodic or it is uniform. But uniformity is equivalent to aperiodicity by \cite{Fr-Ho3}, and we have obtained an aperiodic multiplicative automatic sequence, which is in conflict with Theorem~\ref{t:main}.
\end{Remark}

\section{Proof of Corollary~\ref{c:main2}}\label{s:lastS}
Let $\bfu:\N\to\C$ be a bounded multiplicative function. Following \cite{Ab-Ku-Le-Ru}, we say that a topological dynamical system $(Y,T)$ satisfies the strong $\bfu$-MOMO property\footnote{Instead of the strong $\mob$-MOMO property, we speak about the strong MOMO property. The acronym MOMO stands for {\em M\"obius Orthogonality of Moving Orbits.}} if for each increasing sequence $(b_k)_{k\geq1}\subset\N$, $b_1=1$, $b_{k+1}-b_k\to\infty$, each sequence $(y_k)\subset Y$ and each $f\in C(Y)$, we have
\beq\label{zalo1}
\lim_{K\to\infty}\frac1{b_K}\sum_{k\leq K}\left|
\sum_{b_{k}\leq n<b_{k+1}}f(T^ny_k)\bfu(n)\right|=0\eeq
Substituting $f=1$ above, we see that $\bfu$ has to satisfy~\eqref{zalo2}.

Clearly, given $(Y,T)$, \eqref{zalo1} implies \eqref{mdis}. In fact (for $\bfu=\mob$), in the class of zero entropy systems, they are equivalent: it is proved in \cite{Ab-Ku-Le-Ru} that Sarnak's conjecture holds if and only if all zero entropy systems enjoy the strong MOMO property.\footnote{Moreover, no positive entropy systems has the strong MOMO property. We recall (see \cite{Do-Se}) that there are positive entropy systems $(Y,T)$ satisfying~\eqref{mdis} for
all $f\in C(Y)$.}

\begin{Lemma}\label{l:momo1} For each primitive substitution $\theta$, the system $(X_\theta,S)$ has the strong $\bfu$-MOMO property for each bounded, aperiodic,
multiplicative $\bfu:\N\to \C$  satisfying~\eqref{zalo2}.\end{Lemma}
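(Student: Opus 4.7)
By subtracting $\int f\,d\mu_\theta$ and absorbing the resulting constant contribution via~\eqref{zalo2} (applied to the scalar $\bfu$), it suffices to treat the case $\int f\,d\mu_\theta=0$. My plan is to exploit the $\la^t$-skeleton structure of points of $X_\theta$ to decompose each moving-orbit chunk $[b_k,b_{k+1})$ into aligned $\la^{T}$-blocks of $y_k$; on each block $\tilde f(S^n y_k)$ is determined by the block's letter and position, reducing the local sum to a shifted orthogonality question handled by Theorem~\ref{t:main} and aperiodicity of $\bfu$, while the block-boundary and ragged-edge remainders are controlled by~\eqref{zalo2}.

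\textbf{Step 1 (cylinder approximation and skeleton decomposition).} Given $\epsilon>0$, first approximate $f$ by a cylinder function $\tilde f$ of radius $L$ in the sup norm, with $\|f-\tilde f\|_\infty<\epsilon$. Then for each $k$ large, choose a scale $T_k$ with $\la^{T_k}\gg L$ and $\la^{T_k}=o(b_{k+1}-b_k)$. Using the unique $\la^{T_k}$-skeleton of $y_k$, partition $[b_k,b_{k+1})$ into the set of ``internal'' positions---those lying at distance at least $L$ from every skeleton boundary in $[b_k,b_{k+1})$---together with the ``boundary'' set of cardinality $O((b_{k+1}-b_k)L/\la^{T_k})+O(\la^{T_k})$. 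On the internal set, $\tilde f(S^n y_k)$ depends only on the letter $a\in\A$ labelling the $\la^{T_k}$-block that contains $n$ and on the offset $n-j$ from that block's starting position $j$; explicitly $\tilde f(S^n y_k)=\tilde f(S^{n-j}x_a)$ for any $x_a\in X_\theta$ beginning with $\theta^{T_k}(a)$.

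\textbf{Step 2 (evaluation and summation).} Summing over the internal positions of each $\la^{T_k}$-block with label $a$ at start $j$ gives a contribution $\sum_{m=L}^{\la^{T_k}-L-1}\tilde f(S^m x_a)\bfu(j+m)$. Collecting these across all blocks with label $a$ within $[b_k,b_{k+1})$, then across $a\in\A$ and $k\leq K$, reorganises the double sum into the pattern $\sum_m\tilde f(S^m x_a)\sum_{j\in J^{(k)}_a}\bfu(j+m)$, where $J^{(k)}_a$ is a subset of an arithmetic progression with common difference $\la^{T_k}$. Aperiodicity of $\bfu$ forces $\sum_{j\in J^{(k)}_a}\bfu(j+m)=o(|J^{(k)}_a|)$, while the boundary and ragged-edge contributions are absorbed by~\eqref{zalo2} applied to the refined partition produced by the skeleton boundaries; the cylinder-approximation error contributes at most $\epsilon\|\bfu\|_\infty\sum_k(b_{k+1}-b_k)=O(\epsilon b_K)$.

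\textbf{Main obstacle.} I expect the hard part to be calibrating $T_k$ simultaneously for all $k\leq K$ so that (a) the aperiodicity estimate $\sum_{j\in J^{(k)}_a}\bfu(j+m)/|J^{(k)}_a|\to 0$ is uniform in $k\leq K$, and (b) the boundary error $L/\la^{T_k}+\la^{T_k}/(b_{k+1}-b_k)\to 0$ summed over $k$ stays $o(b_K)$. This will require a diagonal choice $T_k=T_k(\epsilon)\to\infty$ slowly with $k$; once such a choice is made, combining the vanishing of the principal terms with the~\eqref{zalo2}-controlled boundary terms yields $\sum_{k\leq K}\bigl|\sum_{b_k\leq n<b_{k+1}}f(S^n y_k)\bfu(n)\bigr|=o(b_K)+O(\epsilon b_K)$, and letting $\epsilon\to 0$ completes the proof.
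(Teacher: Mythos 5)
There is a genuine gap, and it sits at the heart of your Step 2. The set $J^{(k)}_a$ is not an arithmetic progression: it is the subset of the progression $\{j_0+s\la^{T_k}:\:s\geq0\}$ consisting of those starting positions whose $\la^{T_k}$-block carries the label $a$, i.e.\ it is cut out by the condition $z[s]=a$, where $z\in X_\theta$ is the level-$T_k$ desubstitution of $y_k$. Aperiodicity of $\bfu$ controls means along \emph{full} arithmetic progressions and says nothing about sums over such subsets; the quantity $\sum_{j\in J^{(k)}_a}\bfu(j+m)=\sum_s \raz_{\{z[s]=a\}}\,\bfu(s\la^{T_k}+j_0+m)$ is precisely a correlation of an automatic sequence (a $1$-code of $z$) with $\bfu$ along a progression --- the very thing the lemma is trying to bound. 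Your renormalization therefore reduces the problem to itself and does not close. A second, decisive symptom of the same problem is that your argument never uses multiplicativity of $\bfu$, whereas the statement is false without it once $c(\theta)>h(\theta)$: the Thue--Morse sequence $t(n)=(-1)^{s_2(n)}$ is bounded, aperiodic (by Gelfond's theorem), and satisfies~\eqref{zalo2} (its partial sums $\sum_{n<N}t(n)$ are uniformly bounded, hence so are its sums over arbitrary intervals), yet taking $\bfu=t$, $y_k$ equal to the Thue--Morse point and $f$ the $\pm1$ zero-coordinate map makes each inner sum in~\eqref{zalo1} equal to $b_{k+1}-b_k$. Any proof using only aperiodicity and~\eqref{zalo2} would contradict this example.

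For comparison, the paper's proof is short and runs along entirely different lines: it reduces to the extension $(X_{\widehat\Theta},S)$, splits each $F\in C(X_{\widehat\Theta})$ as $F=F_1+F_2$ with $F_1$ continuous and measurable with respect to the Kronecker factor and $F_2$ continuous and orthogonal to it (this is what the group $\mathcal{V}$ of homeomorphisms was built for), disposes of $F_1$ by the strong $\bfu$-MOMO property of ergodic rotations (Corollary~28 in \cite{Ab-Ku-Le-Ru}; this is where aperiodicity and~\eqref{zalo2} enter), and disposes of $F_2$ by the classification of joinings between $S^p$ and $S^q$ from Proposition~\ref{p:essc3} fed into the moving-orbit version of the KBSZ criterion (the proof of Theorem~25 in \cite{Ab-Ku-Le-Ru}); multiplicativity of $\bfu$ is used only in this last step. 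If you want to salvage a combinatorial block-decomposition argument, the input you need for the non-Kronecker part must play the role of KBSZ, not of aperiodicity.
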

\begin{proof} We only need to prove the result for $(X_{\widehat\Theta},S)$.  Then, the result follows immediately from Theorem~25 in \cite{Ab-Ku-Le-Ru} or, more precisely, from its proof. Indeed, all ergodic rotations satisfy the strong $\bfu$-MOMO property (Corollary~28 in \cite{Ab-Ku-Le-Ru}) and the only ergodic joinings between $S^p$ and $S^q$  whenever $p\neq q$ are large enough, are relative products over isomorphisms of Kronecker factors. Moreover, the structure of the space of continuous functions allows us to represent each $F\in C(X_{\widehat\Theta})$ as $F=F_1+F_2$, with both $F_i$ continuous, $F_1$ measurable with respect to the Kronecker factor, and $F_2$ orthogonal to $L^2$ of that factor. Finally, we apply the reasoning from the proof of Theorem~25 to $F_2$.\end{proof}

\noindent
{\em Proof of Corollary~\eqref{c:main2}}
It follows from Lemma~\ref{l:momo1} and the equivalence of Properties~1 and~3 in Main Theorem in~\cite{Ab-Ku-Le-Ru} that all MT-substitutional systems satisfy the strong $\bfu$-MOMO property for each bounded, aperiodic, multiplicative $\bfu$ satisfying~\eqref{zalo2}. The uniform convergence follows now from Theorem~7 in \cite{Ab-Ku-Le-Ru}.\bez

Note that among MT-substitutional systems there are uniquely ergodic models which are topologically mixing \cite{Lehrer}. In such models the maximal equicontinuous factor must be trivial, that is, there is no topological ``realization'' of the Kronecker factor.

Theorem~\ref{t:main} tell us that for each primitive substitution $\theta$, each bounded, multiplicative, aperiodic $\bfu$,~\eqref{mdis} holds for each $f\in C(X_\theta)$ and arbitrary $x\in X_\theta$. We do not know however whether this convergence is uniform in $x$. Note that Corollary~\ref{c:main2} gives the positive answer if $\bfu$ satisfies additionally~\eqref{zalo2}.

\bibliographystyle{plain}

\vspace{3ex}

Faculty of Mathematics and Computer Science, Nicolaus Copernicus University, Chopin street 12/18, 87-100 Toru\'n, Poland: mlem@mat.umk.pl

Institut f\"ur Diskrete Mathematik und Geometrie, TU Wien,  Wiedner Hauptstr. 8-10, 1040 Wien, Austria: clemens.muellner@tuwien.ac.at
\end{document}